\definecolor{cornell-red}{RGB}{179,27,27}
\newtheorem{thm}{Theorem}
\newtheorem{lem}[thm]{Lemma}
\newtheorem{prop}[thm]{Proposition}
\newtheorem{cor}[thm]{Corollary}
\theoremstyle{definition}
\theoremstyle{remark}
\newtheorem{rem}{\textbf{Remark}}
\newcommand{\ra}[1]{\renewcommand{\arraystretch}{#1}}  
\newcommand{\E}{\mathbb{E}}
\newcommand{\Prob}{\mathbb{P}}
\newcommand{\Q}{\mathbb{Q}}
\newcommand{\R}{\mathbb{R}}
\newcommand{\Z}{\mathbb{Z}}
\newcommand{\calA}{\mathcal{A}}
\newcommand{\calF}{\mathcal{F}}
\newcommand{\calP}{\mathcal{P}}
\newcommand{\calS}{\mathcal{S}}
\newcommand{\calX}{\mathcal{X}}
\newcommand{\calY}{\mathcal{Y}}
\newcommand{\one}{\mathds{1}}
\newcommand{\norm}[1]{\left\lVert#1\right\rVert}
\newcommand{\norms}[1]{\lVert#1\rVert}
\newcommand{\xb}{\pmb{x}}
\newcommand{\ab}{\pmb{a}}
\newcommand{\bb}{\pmb{b}}
\newcommand{\yb}{\pmb{y}}
\newcommand{\db}{\pmb{d}}
\newcommand{\tb}{\pmb{t}}
\newcommand{\mub}{\pmb{\mu}}
\newcommand{\betab}{\pmb{\beta}}
\newcommand{\ub}{\pmb{u}}
\newcommand{\wb}{\pmb{w}}
\newcommand{\nub}{\pmb{\nu}}
\newcommand{\rhob}{\pmb{\rho}}
\newcommand{\alphab}{\pmb{\alpha}}
\newcommand{\gammab}{\pmb{\gamma}}
\newcommand{\deltab}{\pmb{\delta}}
\newcommand{\dlb}{\underline{d}}
\newcommand{\dub}{\overline{d}}
\newcommand{\tlb}{\underline{t}}
\newcommand{\tub}{\overline{t}}
\newcommand{\ulb}{\underline{u}}
\newcommand{\uub}{\overline{u}}
\newcommand{\nulb}{\underline{\nu}}
\newcommand{\nuub}{\overline{\nu}}
\newcommand{\alphalb}{\underline{\alpha}}
\newcommand{\alphaub}{\overline{\alpha}}
\newcommand{\deltalb}{\underline{\delta}}
\newcommand{\deltaub}{\overline{\delta}}
\newcommand{\gammalb}{\underline{\gamma}}
\newcommand{\gammaub}{\overline{\gamma}}
\newcommand{\rholb}{\underline{\rho}}
\newcommand{\rhoub}{\overline{\rho}}
\newcommand{\mud}{\mu^{\mbox{\tiny d}}}
\newcommand{\mut}{\mu^{\mbox{\tiny t}}}
\newcommand{\cw}{c^{\mbox{\tiny w}}}
\newcommand{\cu}{c^{\mbox{\tiny u}}}
\newcommand{\co}{c^{\mbox{\tiny o}}}
\newcommand{\mudb}{\mub^{\mbox{\tiny d}}}
\newcommand{\mutb}{\mub^{\mbox{\tiny t}}}
\newcommand{\xib}{\pmb{\xi}}
\newcommand{\dhat}{\hat{d}}
\newcommand{\that}{\hat{t}}
\newcommand{\xibhat}{\hat{\xib}}
    \titlespacing{\section}{0pt}{0ex}{0ex}
    \titlespacing{\subsection}{0pt}{0ex}{0ex}
    \titlespacing{\subsubsection}{0pt}{0ex}{0ex}
    \journal{arXiv}
    \date{ }
\begin{document}

\begin{frontmatter}

\title{Stochastic Optimization Models for a Home Service Routing and Appointment Scheduling Problem with Random Travel and Service Times}

\author[mymainaddress1]{Man Yiu (Tim) Tsang}
\cortext[cor1]{Corresponding author. }
 \ead{mat420@lehigh.edu}
\author[mymainaddress1]{Karmel S. Shehadeh\corref{cor1}}
 \ead{kas720@lehigh.edu}

 \address[mymainaddress1]{Department of Industrial and Systems Engineering, Lehigh University, Bethlehem, PA,  USA}

\begin{abstract}

\noindent  We study a routing and appointment scheduling problem with uncertain service and travel times arising from home service practice. Specifically, given a set of customers within a service region that an operator needs to serve, we seek to find the operator's route and time schedule. The quality of routing and scheduling decisions is a function of the total operational cost, consisting of customers' waiting time, and the operator's travel time, idle time and overtime. We propose and rigorously analyze a stochastic programming model and two distributionally robust optimization (DRO) models to solve the problem, assuming a known and unknown service and travel time distribution, respectively. We consider two popular types of ambiguity sets for the DRO models, namely, the mean-support and 1-Wasserstein ambiguity set. We derive equivalent mixed-integer linear programming (MILP) reformulations of both DRO models that can be implemented and efficiently solved using off-the-shelf optimization software, thereby enabling practitioners to use these models. In an extensive numerical experiment, we investigate the proposed models’ computational and operational performance, demonstrating where significant performance improvements can be gained with each proposed model and derive insights into the problem.


\begin{keyword} 
OR in service industries, scheduling and routing, uncertainty, distributionally robust optimization, mixed integer programming
\end{keyword}

\end{abstract}
\end{frontmatter}

\section{Introduction}

The home service industry has been rapidly growing worldwide due to emergent changes in family structures, work obligations and extended work hours, aging population, and outspread of chronic diseases, among others.  The United States alone annually spends about $\$82$ billion on home healthcare services and $\$300$ billion on home repairs and maintenance \citep{Lunden, zhan2020home}. In 2018, the global home service market was valued at around $\$282$ billion and is expected to grow by $\$18.9$\% annually from 2019-2026, reaching $\$1133.4$ billion by 2026 (Verified Market Research, 2019). Therefore, the development of efficient home service scheduling and planning tools is essential to support decision-making in all areas of the home service industry. 


Home service planning process often includes the following steps. First, customers request their home services on their preferred dates through a booking system. Second, given each date’s requested services, a home service provider assigns the available providers to customers. Third, the home service provider sets the appointment time for each customer and determine each operator’s route. Fourth, the service provider communicates the appointment times to the customers. On the day of service, the operator executes the schedule and visits customers one by one. In this paper, we consider the perspective of a home service provider who needs to make the following planning decisions for each operator and assigned set of customers: (1) how to route the operator, and (2) how to assign appointment times to customers. Henceforth, for simplicity, we use the term ``\textit{routing}'' to refer to routes that specify the operator movement between customers or the sequence of visits on the operator’s schedule, and ``\textit{appointment time}'' to refer to the scheduled start time of the service of a customer. The set of appointment times of customers represent a schedule.


We formally refer to this problem as the home service operator routing and appointment scheduling (HRAS) problem. Specifically, given a set of customers within a service region that an operator needs to serve, the HRAS problem seeks to determine the operator's route and customers' appointment times. We consider both random customers' service time and travel time between customers. If the operator arrives at the customer's location before the scheduled service start time, then the operator needs to wait until the scheduled start time (i.e., remains idle). On the other hand, the customer needs to wait if the operator arrives after the scheduled start time. The operator has fixed service hours beyond which s/he experiences overtime. The quality of routing and scheduling decisions is a function of the total operational cost, consisting of customers' waiting time, and the operator's travel time, idle time, and overtime.

HRAS is a challenging optimization problem for several reasons. First, it is a complex combinatorial optimization problem that requires deciding the operator's route and assigning appointment times to each customer in the route simultaneously. Second, suppose we fix the appointment times. In this case, HRAS becomes similar to the traveling salesman problem (TSP) and the single-vehicle routing problem with time constraints, which are known to be challenging optimization problems (see \citealp{cook2011pursuit}). Third, for a fixed route, HRAS reduces to the well-known challenging  single-server stochastic appointment scheduling (AS) problem with random service time, which seeks a sequence of appointment times that minimizes customers' waiting times and the provider's idling time and overtime \citep{ahmadi2017outpatient, robinson2003scheduling}.  The rationale for considering travel time or travel distance as an additional optimization criterion in HRAS is to minimize service delivery or operational cost by controlling operator's amount of compensation for the distance or time traveled \citep{grieco2020operational,kandakoglu2020decision}.

One can use stochastic programming (SP) to model uncertain travel and service time. However, the applicability of SP is limited to the case in which we know the distribution of uncertainty or we have a large data sample. In most real-world applications, it is challenging to estimate the actual distribution of uncertain parameters accurately, especially with limited data during the planning process. Moreover, future uncertainty realizations may differ from all training samples. If we optimize according to a biased data sample from a misspecified distribution, the resulting optimal routing and scheduling decisions may have a disappointing out-of-sample performance (e.g., excessive overtime) under the true distribution.

Alternatively, one can construct an \textit{ambiguity set} (i.e., a family) of all possible distributions that share some partial information about the random parameter. Then, using the ambiguity set, one can formulate a distributionally robust optimization (DRO) problem to minimize the worst-case (i.e., maximum) expectation of second-stage random cost over all distributions residing within the ambiguity set.  Note that in DRO, the optimization is based on distributions within an ambiguity set, i.e., the distribution is a decision variable in DRO. DRO has recently received substantial attention in various application domains due to the following benefits. First, DRO models acknowledge and hedge against the presence of distributional uncertainty \citep{kuhn2019wasserstein}. Therefore, DRO solutions often faithfully anticipate the possibility of disappointing out-of-sample consequences and depending on the ambiguity set, they often guarantee an out-of-sample cost that falls below the worst-case optimal cost \citep{kuhn2019wasserstein,rahimi2020comprehensive}. 

Second, by allowing random parameters to follow a distribution defined in the ambiguity set, DRO alleviates the unrealistic assumption of the decision-maker's complete distributional knowledge. Third, intuitive and easy-to-approximate statistics could be used to construct ambiguity sets  that a decision-maker may approximate and change in the model. For example, decision-makers could estimate the average service and travel time based on their experience, a prediction model, or a  small  data set. Then, one could construct a mean-support ambiguity set on service and travel times where the support could represent the error margin and variability in the estimates that we seek protection against. Fourth, some DRO models of real-world problems are more tractable than their SP counterparts (see, e.g, \citealp{jian2017integer, saif2020data,  shehadeh2020distributionally,  shehadeh2020distributionallyTucker, wang2019distributionally, wang2020distributionally,  zhang2017distributionally}). 

The ambiguity set is a key ingredient of DRO models that should capture the true distribution with a high degree of certainty and be computationally manageable (i.e., allow for a tractable DRO model). There are several approaches to construct the ambiguity set. Most applied DRO literature employs moment-based ambiguity,  which incorporates all distributions sharing certain moments (e.g., mean). This is because many tools have been developed to derive tractable and solvable reformulations of moment-based DRO models. In most cases, moment-based DRO models do not scale with the number of scenarios and this leads to a more efficient computational approach. However, moment-based DRO models often do not have asymptotic properties  because the ambiguity sets only incorporates partial distributional information (i.e., moment information).

Recent DRO studies have shifted toward data-driven, distance-based DRO approaches such as $\phi$--divergence \citep{jiang2016data} and Wasserstein distance \citep{esfahani2018data, gao2016distributionally}, that construct ambiguity sets in the vicinity of a reference (e.g., empirical) distribution. One advantage of the Wasserstein ambiguity set is that it could incorporate small-sized data in the ambiguity set and  enjoys asymptotic properties \citep{kuhn2019wasserstein, esfahani2018data}. Recent results indicate that Wasserstein ambiguity set with carefully chosen radius may contain the unknown true distribution with a high probability and is richer than other divergence-based ambiguity sets.  Despite the potential advantages, to the best of our knowledge, there are no moment-based, Wasserstein-based, or other DRO approaches for the HRAS problem we study in this paper. This inspires us to consider this paper's main question: \textit{what are the computational and operational values of employing DRO models to address both service and travel time uncertainty, compared with the classical SP approach for HRAS}?. To answer this question, we design and rigorously analyze an SP and two DRO models based on the mean-support ambiguity and Wasserstein ambiguity sets and demonstrate where significant performance improvements can be gained with each proposed model and derive insights into the problem.



\subsection{Contributions}
In this paper, we address the uncertainty and distributional ambiguity of random travel and service times in HRAS. We summarize our main contributions as follows.

(1)  We propose the first two \textit{\underline{d}istributionally robust \underline{h}ome service \underline{r}outing \underline{a}nd \underline{s}cheduling} (DHRAS) models that seek optimal routing and scheduling decisions to minimize the worst-case sum of the operater's travel time, idle time, and overtime costs, and customers' waiting time costs. We consider two popular types of ambiguity sets for the DRO models, namely, the mean-support (M-DHRAS model) and 1-Wasserstein (W-DHRAS model) ambiguity set.  We also propose SP model for HRAS.


(2) We derive equivalent mixed-integer linear programming (MILP) reformulations of the min-max M-DHRAS and W-DHRAS models. The reformulations can be implemented and solved by off-the-shelf optimization software, thereby enabling practitioners to use these models.

(3) To investigate the value of the DRO approach for HRAS, we conduct an extensive numerical experiment comparing the proposed models operational and computational performance. Our results demonstrate that: (a) W-DHRAS yields robust decisions with both small and large data size, and enjoys both asymptotic consistency and finite-data guarantees; (b) M-DHRAS produces the most conservative schedules and it performs well when the distribution of travel time changes dramatically (e.g., actual travel times are longer); (c) W-DHRAS solutions have better operational performance than the SP solutions both under perfect and misspecified distributions, even when only a small data set is available; (d) DRO models produce more reliable solutions than the SP model; (e) the proposed SP and DRO models are computationally efficient  under realistic settings.

To the best of our knowledge, according to the recent review of operational research in home health care applications \citep{grieco2020operational} and our literature review in Section~\ref{sec2:Literature}, our paper is the first to attempt to investigate and compare theoretically and numerically DRO models for this specific HRAS problem. The remainder of this paper is organized as follows. In Section~\ref{sec2:Literature}, we review the relevant literature. In Section~\ref{sec3:Formulations}, we formally define the DHRAS models, present and analyze the M-DHRAS and W-DHRAS models. In Section~\ref{sec4:Exp}, we present computational results and managerial implications. Finally, we draw conclusions and discuss future directions in Section~\ref{sec5:conclusion}.


\section{Relevant Literature}\label{sec2:Literature}
In this section, we focus primarily on the literature mostly relevant to our problem, i.e., papers that apply stochastic optimization to address HRAS problems similar to ours.  For comprehensive recent surveys of operations research methods applied to decisions in home health care, we refer readers to \cite{fikar2017home,grieco2020operational, gutierrez2013home}. If we fix the appointment time, our problem reduces to the single-vehicle routing problem. That is, we need to design the optimal route of a single vehicle to visit all the customers by minimizing the costs of travel. Recent survey of the literature on these problems includes \cite{costa2019exact, oyola2018stochastic}. On the other hand, if we fix the route, our problem is similar to the single-server appointment scheduling problem. That is, we need to decide the optimal appointment time for the customers to minimize the total cost of customer waiting and server idling and overtime costs (see, e.g., \citealp{berg2014optimal, shehadeh2019analysis}). Appointment times can be fixed time points (see, e.g., \citealp{braekers2016bi, demirbilek2019dynamically, milburn2012operations}) or time windows (see, e.g., \citealp{lee2013continuous, mankowska2014home, yuan2015branch} and references therein). Herein, we focus on the former case under random service and travel times.  We refer to \cite{pinedo2016scheduling} for a comprehensive survey of scheduling theory, applications and methods.

SP approaches for HRAS include \cite{liu2019branch,yuan2015branch,zhan2018vehicle}. The recent work of  \cite{zhan2020home} is closely related to ours. Specifically, \cite{zhan2020home} consider a single operator HRAS problem in which they assume a \textit{deterministic travel time }between customers and that \textit{each customer's service time follows a fully known probability distribution}.  Accordingly,  \cite{zhan2020home} propose a two-stage stochastic mixed-integer linear program,  which seeks first-stage routing and scheduling decisions that minimize the operator traveling cost and an expectation of the second-stage cost of the operator idling and customer waiting. However, they do not incorporate the operator overtime cost in the objective, which is one important source of operational expenses. Note that ignoring random travel time may lead to sub-optimal solutions with, for example, excessive customer waiting time or provider overtime.
 
The SP approach assumes that the decision-maker is risk-neutral and knows the random parameters' distributions with certainty or can fully estimate them. In practice, it is implausible that decision-makers have sufficient high-quality data to infer the true distribution of random parameters, especially in health care applications. Given distributional ambiguity, if we calibrate a model to a misspecified (i.e., biased) distribution, the resulting optimal SP decisions may have a disappointing out-of-sample performance under the true distribution.  Various robust approaches have been proposed to model the risk-averse nature of decision-makers and uncertain parameters based on partial information of their distributions. The classical robust optimization (RO) approach assumes that uncertain parameters resided on an uncertainty set of possible outcomes with some structure (e.g., polyhedron, see, \citealp{ben2015deriving,bertsimas2004price,soyster1973convex}). Accordingly, optimization in RO is based on the worst-case scenario within the uncertainty set.

Recently, \cite{shi2019robust} propose an RO approach for home health care service.  They consider the problem with multiple caregivers and the goal is to find the optimal route for each caregiver and appointment time for each patient. They propose a robust model on both unknown service and travel time and discuss the heuristic solution approaches, with the objective function being the travel cost and delay cost with respect to the schedules. They find that, as compared with the SP model in \cite{shi2018modeling}, solutions from RO model has a better out-of-sample performance. For instance, the probability of visiting the customers timely is higher.  

By focusing the optimization on the worst-case scenario, classical RO approaches often yield over-conservative solutions and suboptimal decisions for other more-likely scenarios \citep{chen2020robust, delage2018value}. Distributionally robust optimization (DRO) is another approach for modeling uncertainty that bridges SP and RO by overcoming their shortcoming and has recently gained significant attention. In DRO, we model the distribution of uncertainty as a decision variable that belongs to an ambiguity set (i.e., a family of all possible distributions of uncertainty). We then optimize based on the distribution within this set \citep{rahimian2019distributionally}. 

 There are two common types of ambiguity sets: moment-based and distance-based ambiguity sets (see, e.g., \citealp{rahimian2019distributionally}). In this paper,  we address the distributional ambiguity of service time and travel time in the single-operator HRAS using these two types of ambiguity set, namely the M-DHRAS and W-DHRAS models, in the absence of abundant historical data. We reformulate them into MILPs, which could be solved by off-the-shelf optimization software. To the best of our knowledge and according to the recent surveys \cite{fikar2017home,grieco2020operational}, there is no DRO approach for the specific HRAS problem that we address.  Moment-based DRO models for single-server appointment scheduling includes \citep{jiang2017integer, kong2013scheduling, mak2015appointment}.   Recently, \cite{jiang2019data} propose the distance-based DRO model based on the Wasserstein metric for single-server appointment scheduling for a fixed sequence of customers with random service time and no-show. They derive tractable reformulations of their model under the 1--Wasserstein and 2--Wasserstein ball. 
 
We compare our work with \cite{jiang2019data} and \cite{zhan2020home}, which are recent single-server studies relevant to our work (see Table \ref{table:literature_comparison} in \ref{Appx:LR_Compare} for a quick comparison). First, the three papers address stochastic service time. Second, \cite{jiang2019data} study a classical single-server appointment scheduling problem in which they need to determine the appointment time for a fixed sequence of customers. The server does not need to travel to different customers. Hence, no routing decisions or travel time consideration is necessary. In other words, \cite{jiang2019data}'s model cannot be used for routing and appointment scheduling. Third, \cite{zhan2020home} assume that the travel time is deterministic, which as mentioned earlier, may lead to sub-optimal and unrealistic solutions. In addition,  \cite{zhan2020home} do not address the potential distributional ambiguity of service time while \cite{jiang2019data} address this.  In contrast to \cite{zhan2020home} we consider both stochastic service and travel times. Moreover, we address both uncertainty and distributional ambiguity of these parameters via SP and DRO. Note that if we fix the travel time as zero, our model reduces to a scheduling model. Thus, in this case, our models generalize that of \cite{jiang2019data} by incorporating sequencing decisions and hence, can be used for sequencing and scheduling. Various appointment scheduling studies \citep{ahmadi2017outpatient, berg2014optimal, cayirli2006designing, cayirli2008assessment, creemers2012optimal,  shehadeh2019analysis} demonstrate the benefit of sequencing customers appointments based on their characteristics for improving performance and reducing costs as compared with fixed sequence approach that \cite{jiang2019data} adopt.

\section{Formulation and Analysis}\label{sec3:Formulations}
In this section, we formulate the DHRAS problem using moment  (M-DHRAS) and Wasserstein  (W-DHRAS) ambiguity sets. We define DHRAS and provide a two-stage stochastic programming formulation in Section~\ref{sec3:Defintions}. We analyze and derive equivalent mixed-integer linear programming reformulations of M-DHRAS in Section~\ref{MDHRAS} and W-DHRAS in Section~\ref{WDHRAS}. 

\vspace{1mm}
\noindent \textbf{Notation:} For $a,b \in \Z$, we define $[a]:=\{1,2,\dots, a \}$ and $[a,b]_\Z:=\{ c \in \Z: a \leq c \leq b \}$, i.e., $[a,b]_\Z$ represent the set of running integer indices $\{a, a+1, a+2, \dots, b \}$. For a real number $a$, we define $(a)^+=\max\{a,0\}$. We use boldface notation to denote vectors, e.g., $\db:=[d_1, d_2, \ldots, d_N]^\top$.  

\begin{table}[t]  
\small
\center
\renewcommand{\arraystretch}{0.5}
\caption{Notation.} 
\begin{tabular}{ll}
\hline 
 \multicolumn{2}{l}{\textbf{Indices}} \\
$i$ & index of customer, $i=1,\dots,N$ \\
$j$ & index of service position, $j=1,\dots,N$ \\
\multicolumn{2}{l}{\textbf{Parameters and sets}} \\
$N$ & number of customers \\
$L$ & standard work time \\
$\cu_j$ & idling cost associated to the early arrival at the $j$th customer's location\\
$\cw_j$ & waiting cost of the $j$th customer \\
$\co$ & overtime cost\\
$\lambda$ & travel time cost \\
$d_i$ & service time of customer $i$ \\
$t_{i,i'}$ & travel time from customers $i$ to $i'$ (with $0$ denoting the depot)\\
$\dlb_i$/$\dub_i$ & lower/upper bound of the service time of customer $i$ \\
$\tlb_{i,i'}$/$\tub_{i,i'}$ & lower/upper bound of the travel time from customers $i$ to $i'$ \\ 
\multicolumn{2}{l}{\textbf{First-stage decision variables}} \\
$x_{i,j}$ & binary variable equal to $1$ if customer $i$ is the $j$th customer served and $0$ otherwise \\
$a_j$ & appointment time of the $j$th customer\\
\multicolumn{2}{l}{\textbf{Second-stage decision variables}} \\
$u_j$ & idling time due to an early arrival at the $j$th customer's location \\
$w_j$ & waiting time of the $j$th customer ($w_{N+1}$ as the overtime)\\
\hline
\end{tabular}\label{table:notation}
\end{table}

\subsection{\textbf{Definitions and assumptions}}\label{sec3:Defintions}

We consider a set of $N$ customers that need to be served within a given day by a single home service provider (operator). Traveling time, $t_{i,i'}$, between each pair of customers $i$ and $i'$ ($i,i'=0, \dots, N$) is random, with $i=0$ representing the service provider's office (depot).  Service time $d_i$ of each customer $i \in [N]$ is also random. Note that we may only have partial information on the distributions or possibly small data available on these random parameters. We seek two sets of decisions:  (1) service provider's visiting sequence (routes), (2) customers appointment times (schedule).  The objective is to minimize the sum of customers' waiting time, and the provider's traveling time, idling time, and overtime. 

For all $i \in [N]$ and $j \in [N]$, we let the binary decision variable $x_{i,j}$ equal 1 if customer $i$ is the $j$th customer in the operator's route/schedule, and zero otherwise.  For all $j \in [N]$, we let the continuous variable $a_j$ represent the appointment starting time of the $j$th customer.  The feasible region of variable $\xb$ is defined in \eqref{eq:RegionX} such that each customer is assigned to one position in the operator's route/schedule, and each position is assigned to one customer. The feasible region of  $\ab$ is defined in \eqref{eq:RegionA} such that all appointments are scheduled within the provider's service hours [$0, L$].
\allowdisplaybreaks
\begin{align}
\calX&=\left\{ \xb:  \begin{array}{l} \sum_{i=1}^N x_{i,j} =1, \  \forall j \in [N] \\ 
 \sum_{j=1}^N x_{i,j} =1,  \ \forall  i \in [N] \\ 
x_{i,j} \in \{0, 1 \}, \  \forall i \in [N], \ \forall j \in [N] \end{array} \right\}  \label{eq:RegionX}
\end{align} 
\begin{align}
 \calA&= \left\{ \ab: \begin{array}{l}  
 \  0 \leq a_j \leq L, \  \ \forall j \in [N] \\
  \ a_{j} \geq a_{j-1}, \ \forall j \in [2, N]_\Z \end{array} \right\} \label{eq:RegionA}
\end{align} 

Due to random travel and service times, one or multiple of the following scenarios may happen: (1) operator arrives at the customer's location before the scheduled service start time ($a_j$), and thus s/he remains idle until the scheduled start time; (2)  the operator arrives after the scheduled start time of the customer, and thus the customer incurs waiting cost; (3) provider works overtime beyond his/her scheduled $L$ to finish all appointments. Let the continuous decision variable $w_j$ represent the waiting time of the $j$th customer, for all $j \in [N]$ and $w_{N+1}$ represent the operator's overtime. For all $j \in [N]$, let the continuous decision variable $u_j$ represent the provider's idle time before the start time of the $j$th appointment. For all $i \in [N]$, let the random parameter $d_i$ represent the service duration of customer $i$. For all $i \in [N]$ and $i' \in [N]$, let the random parameter $t_{i,i'}$ represent the travel time between $i$ and $i'$. Given a fixed $\xb \in \calX$,  $\ab \in \calA$ and a joint realization $\xib:=[\tb, \db ]^\top$, we can compute the operational costs (as a function of travel time, idle time, and overtime, and customers waiting time), using the following linear program:
\allowdisplaybreaks
\begin{subequations}\label{LP:Sec}
\begin{align}
  f(\xb, \ab, \xib ):=  & \min_{\ub,\,\wb} \ \Bigg \{  \sum \limits_{j=1}^N (\cw_j w_j+\cu_j u_j \big) +\co w_{N+1} + \lambda A \Bigg \}  \label{2nd:Obj}\\
& \ \text{s.t.}  \ \ w_1-u_1= \sum \limits_{i=1}^N t_{0,i}x_{i,1}-a_1, \label{2nd:C1}\\
&  \ \ w_j-w_{j-1}-u_j=a_{j-1}-a_j+\sum_{i=1}^N d_ix_{i,j-1}+ \sum_{i=1}^N \sum_{i'\neq i} t_{i,i'} x_{i,j-1}x_{i,j}, \ \forall j\in [2, N]_\Z, \label{2nd:C2}\\
& \ \   w_{N+1}-w_N-u_{N+1}= a_N-a_{N+1}+\sum_{i=1}^N d_ix_{i,N}, \label{2nd:C3}\\
& \ \ A= \sum \limits_{j=2}^N   \sum_{i=1}^N \sum_{i'\neq i} t_{i,i'} x_{i,j-1}x_{i',j}+\sum_{i=1}^N(t_{0,i}x_{i,1}+t_{i,0}x_{i,N}), \label{2nd:C5}\\
& \ \ (w_j,u_j)\geq 0, \ \ \forall j \in [N+1], \label{2nd:C4}
\end{align} \label{2ndStage}%
\end{subequations}
where $\cw_j$, $\cu_j$, $\co$, and $\lambda$ are respectively the non-negative unit penalty costs of waiting, idling,  overtime, and travel time for all $j\in [N]$.  Also, we let $a_{N+1}=L$ and $x_{i,N+1}=0$ for all $i\in[N]$. The objective function \eqref{2nd:Obj} minimizes a linear cost function of waiting, idling, overtime, and travel time. Constraint \eqref{2nd:C1}  yields either provider's idle time before the scheduled time of the first appointment or the waiting time of the first customer. Constraint \eqref{2nd:C2} yields either the waiting time of the $j$th customer or the provider's idle time if s/he arrives at the $j$th customer, respectively, after or before the scheduled start time $a_j$ of the $j$th customer.  Constraints \eqref{2nd:C3} yields either the overtime or the schedule earliness. Constraint \eqref{2nd:C5} computes the operator's total travel time.

The SP model in \eqref{SP} seeks to find $(\xb,\ab)\in\calX\times\calA$ that minimizes the expectation of the cost $f(\xb,\ab, \xib$) subject to random $\xib$ with a known joint probability distribution $\Prob_{\xib}$.
\begin{align}
& Z^*:= \min \limits_{\xb \in \calX, \ab \in \calA} \E_{\Prob_{\xib}} [f(\xb,\ab, \xib) ]  \label{SP}
\end{align}

\subsection{\textbf{DHRAS over moment ambiguity (M-DHRAS)}}\label{MDHRAS}

In this section, we consider the case when $\Prob_{\xib}$ is not fully known or hard to estimate. We assume that we know or can approximate the mean values $\E_{\Prob_{\xib}}(\xib):=\mub=[\mudb, \mutb]^\top$, lower bounds $[\underline{\db},\underline{\tb}]^\top $ and upper bounds $[\overline{\db},\overline{\tb}]^\top$ of $[\db,\tb]^\top$. Mathematically, we consider the support $\calS= \calS^{\mbox{\tiny d}} \times \calS^{\mbox{\tiny t}}$ of $\xib$, where $\calS^{\mbox{\tiny d}}$ and $\calS^{\mbox{\tiny t}}$ are respectively the supports of random parameters $\db$ and $\tb$ defined in \eqref{Support}. 
\begin{subequations} \label{Support}
\begin{align}
& \calS^{\mbox{\tiny d}}:=\left\{\, \db  \geq 0: \begin{array}{l} \dlb_{i} \leq d_{i} \leq \dub_{i},  \ \forall i \in [N]\end{array} \right\}\,,\\
& \calS^{\mbox{\tiny t}}:=\left\{\, \tb  \geq 0: \begin{array}{l} \tlb_{i, i'} \leq t_{i, i'} \leq \tub_{i,i'},  \ \forall i \in [0,N],\, i'\in[0,N],\, i' \neq i\end{array} \right\}\,.
\end{align}
\end{subequations}
The assumption of known mean and support is motivated by the fact that experts may be able to approximate the average service or travel time based on their experience (or from prediction models). The range can represent the error margin of the mean values estimates and variability of the parameters which we seek protection against. Experts may also be able to provide an upper and lower bound on service time. Therefore, we can use such information to define a mean-support ambiguity set of uncertain parameters distributions. Using $\mub$ and $\calS$, we consider the following mean-support ambiguity set  $\calF(\calS, \mub)$:
\begin{align}\label{eq:Mambiguity}
\calF(\calS, \mub) := \left\{ \Prob \in \calP(\calS): \begin{array}{l} \int_\calS d\Prob = 1\\ \mathbb{E_P}(\xib) = \mub \end{array} \right\},
\end{align}
where $\calP(\calS)$ represents the set of all probability distributions supported on $\calS$. Using ambiguity set $\calF(\calS,\mub)$, we formulate M-DHRAS as follows:
\begin{align}
(\text{M-DHRAS}) &\ \ \ \min \limits_{\xb \in \calX, \ab \in \calA} \bigg \{ \sup_{\Prob \in \calF(\calS,\mub) }\E_{\Prob} [f(\xb,\ab, \xib) ] \bigg\}. \label{M-DHRAS}
\end{align}
Formulation \eqref{M-DHRAS} seeks first stage decisions ($\xb, \ab$) that minimizes the worst-case (maximum) expectations of the second-stage operational cost over $\calF(\calS, \mub)$.

\begin{rem} 
Model \eqref{M-DHRAS} could hedge against distributional uncertainty, particularly when limited (or no) information on random parameters is available. The use of mean-support ambiguity set is prevalent due to the following reasons. First, only intuitive statistics, namely the mean and the range, are required. They can be interpreted easily as mean represents the distribution centrality while the range represents the dispersion of the distribution. Decision-makers could approximate these values in various ways (e.g., estimation from the limited available data or from expert knowledge). Second, mathematically, it allows us to derive a tractable MILP reformulation. Various studies have demonstrated that incorporating higher moments or random parameters often undermines the computational tractability of the DRO model due to the non-linearity in higher moments.

\end{rem}

\subsubsection{\textbf{MILP reformulation of M-DHRAS}}\label{sec:ReforM_DHRAS}

Note that model \eqref{M-DHRAS} is not directly solvable in the presented form due to the minimax structure. In this section, we use the recourse problem properties to derive an equivalent MILP reformulation of model \eqref{M-DHRAS}. First, for a fixed $(\xb,\ab)\in\calX\times\calA$, we rewrite the inner maximization problem $\sup_{\Prob \in \calF(\calS,\mub) }\E_{\Prob} [f(\xb,\ab, \xib) ]$ as follows: 
\begin{subequations}\label{MDHRAS:InnerMax}
\begin{align}
& \max   \ \mathbb{E_P}[f(\xb,\ab, \xib)]  \\
& \ \text{s.t.}  \ \ \mathbb{E_P}(\xib) = \mub, \label{MDHRAS::InnerMax-C1}  \\
& \ \ \ \ \  \ \ \mathbb{E_P}[\one_\calS(\xib)] = 1, \label{MDHRAS::InnerMax-C2}
\end{align} 
\end{subequations}
where $\one_\calS(\xib)= 1$ if $\xib \in \calS$ and $\one_\calS(\xib) = 0$ if $\xib \notin \calS$.  As we show in the proof of Proposition~\ref{Prop1M:DualMinMax} in \ref{Appx:ProofOfProb1M}, problem \eqref{MDHRAS:InnerMax} is equivalent to the deterministic problem \eqref{eq:FinalDualInnerMax-1}.

\begin{prop}\label{Prop1M:DualMinMax}
For any $(\xb,\ab)\in\calX\times\calA$, problem \eqref{MDHRAS:InnerMax} is equivalent to
\begin{align} \label{eq:FinalDualInnerMax-1}
&  \min_{\alphab,\,\rhob}  \ \Bigg \{ \sum \limits_{i=1}^N \mu_i^{\mbox{\tiny d}} \rho_i+ \sum_{i=1}^N \sum_{i'=1}^N \mu_{i,i'}^{\mbox{\tiny t}}  \alpha_{i,i'} +\max \limits_{\xib \in \calS  }  \Bigg \{  f(\xb,\ab, \xib)- \sum \limits_{i=1}^N d_i \rho_i- \sum_{i=1}^N \sum_{i'=1}^N t_{i,i'} \alpha_{i,i'}  \Bigg \}  \Bigg\} \\
& \ \ \textup{s.t.} \ \ \alphab\in\R^{(N+1)\times(N+1)},\, \rhob\in\R^N. \nonumber
\end{align}
\end{prop}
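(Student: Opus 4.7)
The plan is to recognize problem \eqref{MDHRAS:InnerMax} as a classical moment problem (an infinite-dimensional linear program over probability measures on the compact support $\calS$) and derive \eqref{eq:FinalDualInnerMax-1} as its Lagrangian dual via strong duality. I would organize the argument in three steps.

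First, I would write the Lagrangian of \eqref{MDHRAS:InnerMax} with multipliers $\rho_i\in\R$ for the mean constraints on $d_i$, $\alpha_{i,i'}\in\R$ for the mean constraints on $t_{i,i'}$, and an unconstrained scalar $\beta\in\R$ for the normalization constraint \eqref{MDHRAS::InnerMax-C2}. Taking the supremum of the resulting expression over all nonnegative measures supported on $\calS$ gives $+\infty$ unless
\[
\beta \;\geq\; f(\xb,\ab,\xib) \;-\; \sum_{i=1}^N d_i\rho_i \;-\; \sum_{i=1}^N\sum_{i'=1}^N t_{i,i'}\alpha_{i,i'}\qquad \forall\,\xib\in\calS.
\]
Hence the dual reads
\[
\min_{\alphab,\rhob,\beta}\; \Biggl\{\beta + \sum_{i=1}^N \mu_i^{\mbox{\tiny d}}\rho_i + \sum_{i=1}^N\sum_{i'=1}^N \mu_{i,i'}^{\mbox{\tiny t}}\alpha_{i,i'}\Biggr\}
\]
subject to the pointwise inequality above. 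Projecting out $\beta$ (which at the optimum equals the maximum of the right-hand side over $\xib\in\calS$) yields exactly the objective in \eqref{eq:FinalDualInnerMax-1}.

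Second, I would establish strong duality. The support $\calS$ is compact as a finite product of bounded intervals, and for any fixed $(\xb,\ab)\in\calX\times\calA$ the second-stage value $f(\xb,\ab,\xib)$ is finite-valued, piecewise linear, and hence continuous in $\xib$. Because each coordinate-mean $\mu_i^{\mbox{\tiny d}}$ and $\mu_{i,i'}^{\mbox{\tiny t}}$ lies strictly inside its supporting interval (a mild Slater-type interior condition automatically satisfied whenever the ambiguity set is non-degenerate, i.e., $\dlb_i<\mu_i^{\mbox{\tiny d}}<\dub_i$ and $\tlb_{i,i'}<\mu_{i,i'}^{\mbox{\tiny t}}<\tub_{i,i'}$), there exists a probability measure on $\calS$ strictly feasible for the moment constraints. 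Under these conditions, the classical strong-duality results for generalized moment problems (e.g., Isii's theorem, or Shapiro's 2001 infinite-LP duality theorem) apply and guarantee zero duality gap together with dual attainment.

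Third, I would tidy up: rewrite the resulting dual as the outer minimum over $(\alphab,\rhob)$ of the sum $\sum_i \mu_i^{\mbox{\tiny d}}\rho_i + \sum_{i,i'} \mu_{i,i'}^{\mbox{\tiny t}}\alpha_{i,i'}$ plus the inner maximum $\max_{\xib\in\calS}\{f(\xb,\ab,\xib) - \sum_i d_i\rho_i - \sum_{i,i'} t_{i,i'}\alpha_{i,i'}\}$, which is exactly \eqref{eq:FinalDualInnerMax-1}. The main obstacle I anticipate is not the algebraic manipulation but the rigorous verification of strong duality: one must check that Slater's condition (or an equivalent constraint-qualification statement for semi-infinite/moment programs) holds, and cite the appropriate infinite-dimensional LP duality theorem. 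Everything else is essentially bookkeeping on the Lagrangian.
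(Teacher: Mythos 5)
Your proposal is correct and follows essentially the same route as the paper: both treat \eqref{MDHRAS:InnerMax} as a semi-infinite linear program over measures, dualize the mean and normalization constraints (your $\beta$ is the paper's $\theta$), invoke strong duality for the moment problem under the interior-point condition on $\mub$, and then eliminate the scalar multiplier by observing it equals the inner maximum over $\xib\in\calS$ at optimality. The only difference is cosmetic: you spell out the Slater-type constraint qualification and name the relevant duality theorems, whereas the paper simply asserts the standard interior assumptions and cites the literature.
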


 Note that $ f(\xb,\ab, \xib)$  is a minimization problem, and thus in \eqref{eq:FinalDualInnerMax-1} we have an inner max-min problem, which is not suitable to solve using standard solution methods. For a given solution $(\xb,\ab)$ and realized value of $\xib$, $ f(\xb,\ab, \xib)$ is a linear program (LP). The dual of $  f(\xb,\ab, \xib)$ is as follow
%
\begin{subequations}\label{Dual:Sec}
\begin{align}
  f(\xb, \ab,\xib ):=  & \max_{\yb} \Bigg \{ \Bigg(\sum_{i=1}^N t_{0,i}x_{i,1}-a_1\Bigg)y_1+  \sum_{j=2}^{N+1} \Bigg ( a_{j-1}-a_j+\sum_{i=1}^N d_ix_{i,j-1}+ \sum_{i=1}^N \sum_{i'\neq i} t_{i,i'} x_{i,j-1}x_{i,j}\Bigg)y_j  \nonumber\\
  & \qquad \ \ \  \ + \lambda \Bigg[\sum \limits_{j=2}^N   \sum_{i=1}^N \sum_{i'\neq i} t_{i,i'} x_{i,j-1}x_{i',j}+\sum_{i=1}^N\Big(t_{o,i}x_{0,i}+t_{i,0}x_{i,N}\Big) \Bigg] \Bigg \}  \label{Dual2nd:Obj}\\
  & \text{s.t.}  \ \  \yb\in\calY:= \Big\{\yb \mid  0\leq y_{N+1} \leq \co, \ -\cu_j\leq y_j \leq \cw_j+ y_{j+1}, \ \forall j \in [N]\Big\}, \label{Dual2nd:C1}
\end{align} \label{Dual2ndStage}
\end{subequations}
\hspace{-1.2mm}where $y_1, \ldots, y_{N+1}$ are the dual variables associated with constraints \eqref{2nd:C1}--\eqref{2nd:C3}.  Given $(\xb,\ab)\in\calX\times\calA$ and $\xib$, the objective function in \eqref{Dual2nd:Obj} is linear (convex) in $\yb$. Hence, the inner maximization problem in \eqref{Dual:Sec} is a convex maximization problem. It follows from the fundamental convex analysis that it suffices to consider the extreme points of polytope $\calY$. This motivates us to leverage the properties of the extreme points of $\calY$ in deriving an equivalent LP reformulation of \eqref{Dual:Sec}. We formally prove this in Proposition~\ref{Prop2M} (see \ref{Appx:ProofProp2M} for a detailed proof).

\begin{prop}\label{Prop2M}
For any $(\xb,\ab)\in\calX\times\calA$, the inner maximization problem in \eqref{eq:FinalDualInnerMax-1} is equivalent to .
\begin{subequations}\label{DHRASMINLP}
\begin{align} 
 \min_{\betab,\,\gammab,\,\deltab}  &\  \sum \limits_{j=1}^{N+2} \beta_j  +\sum_{i=1}^N\left[(\lambda-\alpha_{i,0})\tlb_{i,0}+\Delta t_{i,0}(\lambda-\alpha_{i,0})^+\right]x_{i,N}  \label{DHRAS_MINLP_Obj}\\
\textup{s.t.} & \ \beta_1 \geq -a_1  \pi_{1,1} +\sum_{i=1}^N \big [\underline{t}_{0,i} (\pi_{1,1}+\lambda-\alpha_{0,i})+\Delta t_{0,i} \gamma_{0,i,1,1}\big] x_{i,1} , \label{DHRASMINLP:Con1}\\
& \ \sum_{j=1}^v \beta_j \geq -a_1  \pi_{1,v} + \sum_{j=2}^{\min(v,N+1)} \big ( a_{j-1}-a_j \big) \pi_{j,v}   + \sum_{i=1}^N \big [\underline{t}_{0,i} (\pi_{1,v}+\lambda-\alpha_{0,i})+\Delta t_{0,i} \gamma_{0,i,1,v}\big] x_{i,1}  \nonumber \\
& \ \qquad \   + \sum_{j=2}^{\min(v,N+1)} \sum_{i=1}^N \sum_{i' \neq i} \big[ \underline{t}_{i,i'} (\pi_{j,v}+\lambda-\alpha_{i,i'})+ \Delta t_{i,i'} \gamma_{i,i',j,v}  \big]x_{i,j-1}x_{i', j} \nonumber\\ 
& \ \qquad \   + \sum_{j=2}^{\min(v,N+1)} \sum_{i=1}^N \big[ \underline{d}_i (\pi_{j,v}-\rho_i)+ \Delta d_i \delta_{i,j,v} \big ] x_{i,j-1}, \ \
 \forall v \in [2, N+2]_{\Z} ,  \label{DHRASMINLP:Con2} \\
& \ \sum_{j=k}^v\beta_j \geq \sum_{j=k}^{\min(v,N+1)} \big ( a_{j-1}-a_j\big)  \pi_{j,v}+\sum_{j=k}^{\min(v,N+1)} \sum_{i=1}^N \big[ \underline{d}_i (\pi_{j,v}-\rho_i)+ \Delta d_i \delta_{i,j,v} \big ] x_{i,j-1} \nonumber\\
& \ \qquad \    \ + \sum_{j=k}^{\min(v,N+1)} \sum_{i=1}^N \sum_{i' \neq i}  \bigg\{ (\pi_{j,v}+\lambda-\alpha_{i,i'})\underline{t}_{i,i'}+\Delta t_{i,i'}\gamma_{i,i',j,v}  \bigg\}x_{i,j-1}x_{i',j},   \nonumber \\
& \ \qquad \    \  \forall k\in[2,N+1]_\Z,\,\forall v\in[k,N+2]_\Z , \label{DHRASMINLP:Con3} \\
& \ \beta_{N+2} \geq 0 , \label{DHRASMINLP:Con4} \\
& \ \gamma_{0,i,1,v}\geq 0,\quad \gamma_{0,i,1,v}\geq \pi_{1,v}+\lambda-\alpha_{0,i},\quad \forall i\in[N],\,\forall v\in[N+2] ,\label{DHRASMINLP:Con5} \\
& \ \gamma_{i,i',j,v}\geq 0,\quad \gamma_{i,i',j,v}\geq \pi_{j,v}+\lambda-\alpha_{i,i'},\quad \forall i\in[N],\,\forall i'\in[N]\setminus\{i\}, \nonumber \\
& \hspace{70mm} \forall j\in[2,N+1]_\Z,\,\forall v\in[j,N+2]_\Z , \label{DHRASMINLP:Con6}  \\
& \ \delta_{i,j,v}\geq 0,\quad \delta_{i,j,v}\geq \pi_{j,v} -\rho_i,\quad\forall i\in[N],\,\, \forall j\in[2,N+1]_\Z,\,\forall v\in[j,N+2]_\Z  ,\label{DHRASMINLP:Con7}
\end{align}
\end{subequations}
where $\pi_{j,v}=-c_v^u+\sum_{l=j}^{v-1}c_l^w$ for $1\leq j\leq v\leq N+2$, Let $\Delta d_i=\dub_i-\dlb_i$, and $\Delta t_{i,i'}=\tub_{i,i'}-\tlb_{i,i'}$.
\end{prop}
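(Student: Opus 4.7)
The plan is to reformulate the inner maximization through three successive reductions: (i) LP-dualize $f(\xb,\ab,\xib)$, (ii) interchange the order of the resulting maxima and enumerate the vertices of the dual polytope $\calY$, and (iii) rewrite the induced combinatorial max over $\yb$-patterns as a min over $\betab$ via longest-path LP duality.

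First, for any $(\xb,\ab)\in\calX\times\calA$ and $\xib\in\calS$, the recourse LP \eqref{LP:Sec} is feasible and bounded in $(\ub,\wb)$, so strong LP duality gives $f(\xb,\ab,\xib)=\max_{\yb\in\calY}\{\text{objective \eqref{Dual2nd:Obj}}\}$. Substituting into the inner maximization of \eqref{eq:FinalDualInnerMax-1} and interchanging the two maxima yields $\max_{\xib\in\calS,\,\yb\in\calY}\,g(\xib,\yb)$, where $g$ is separately linear in $\xib$ and in $\yb$. By linearity in $\yb$, the max over $\calY$ is attained at a vertex.

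The central combinatorial step is to describe these vertices. Because the defining inequalities of $\calY$ only couple consecutive coordinates $y_j,y_{j+1}$, each vertex corresponds to a partition of $\{1,\ldots,N+2\}$ into contiguous segments $[k_1,v_1],\ldots,[k_m,v_m]$ with $k_1=1$, $k_{i+1}=v_i+1$, and $v_m=N+2$: on a segment $[k_i,v_i]$ with $v_i\le N+1$ the vertex sets $y_{v_i}=-\cu_{v_i}$ at its lower bound and cascades $y_j=\cw_j+y_{j+1}=\pi_{j,v_i}$ for $j\in[k_i,v_i-1]$, while the exceptional case $v_i=N+2$ encodes $y_{N+1}=\co$ via the boundary conventions $\cw_{N+1}=\co$ and $\cu_{N+2}=0$ built into $\pi_{j,N+2}$. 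Equally important, the routing equalities $\sum_j x_{i,j}=\sum_i x_{i,j}=1$ force each component of $\xib$ to appear in the coefficient of at most one $y_j$: $d_i$ enters the coefficient of $y_j$ only when $x_{i,j-1}=1$, and $t_{i,i'}$ only when $x_{i,j-1}x_{i',j}=1$. Consequently, once a vertex $\yb$ is fixed, the inner $\max_{\xib\in\calS}$ decouples into independent box-LPs, one per index $j$, and, crucially, decouples along the partition's segments.

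Each decoupled box-LP is solved in closed form by pushing $d_i$ or $t_{i,i'}$ to its upper or lower bound according to the sign of its coefficient, producing terms of the form $\dlb_i(\pi_{j,v}-\rho_i)+\Delta d_i(\pi_{j,v}-\rho_i)^+$ and $\tlb_{i,i'}(\pi_{j,v}+\lambda-\alpha_{i,i'})+\Delta t_{i,i'}(\pi_{j,v}+\lambda-\alpha_{i,i'})^+$. The positive-part pieces are then linearized using the auxiliary variables $\delta_{i,j,v}$ and $\gamma_{i,i',j,v}$ subject to \eqref{DHRASMINLP:Con5}--\eqref{DHRASMINLP:Con7}. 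The return-to-depot contribution $t_{i,0}x_{i,N}$ does not couple with any $y_j$ and is optimized separately, producing the standalone objective term in \eqref{DHRAS_MINLP_Obj}.

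With the per-segment closed-form value denoted $C(k,v)$, the joint problem reduces to $\max_{\text{partitions}}\sum_i C(k_i,v_i)$, which is the longest-path value on the acyclic digraph with nodes $\{1,\ldots,N+3\}$ and arc $(k,v+1)$ of weight $C(k,v)$. The LP dual of this longest-path problem is precisely $\min\sum_{j=1}^{N+2}\beta_j$ subject to $\sum_{j=k}^{v}\beta_j\geq C(k,v)$ for every admissible pair $(k,v)$ with $1\le k\le v\le N+2$, reproducing constraints \eqref{DHRASMINLP:Con1}--\eqref{DHRASMINLP:Con4}. Composing the three reductions yields the MILP \eqref{DHRASMINLP}. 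The technical crux lies in the vertex/partition correspondence for $\calY$ combined with a monotonicity check that any partition-induced point lying outside $\calY$ never beats the best true vertex in the decoupled objective; once that is settled, the remaining ingredients (strong LP duality for $f$, box linearization of the $(\cdot)^+$ terms, and longest-path LP duality) are standard.
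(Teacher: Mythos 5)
Your proposal is correct and follows essentially the same route as the paper's own proof: strong LP duality on the recourse problem, characterization of the extreme points of $\calY$ as interval partitions of $\{1,\dots,N+2\}$ with the cascading values $y_j=\pi_{j,v}$, componentwise box maximization over $\xib\in\calS$ (which decouples thanks to the assignment structure of $\xb$) yielding the $(\cdot)^+$ terms linearized by $\gamma$ and $\delta$, and finally LP duality on the partition problem to produce the $\beta_j$ constraints. The only cosmetic difference is that you frame this last step as longest-path duality on a DAG, whereas the paper writes the same LP as a set-partitioning program with variables $b_{k,v}$, argues total unimodularity of $\sum_{k\le j}\sum_{v\ge j}b_{k,v}=1$, and dualizes directly; these are the same argument.
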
 

With the use of Proposition \ref{Prop2M}, we can combine the minimization over $\alphab$ and $\rhob$ in problem \eqref{eq:FinalDualInnerMax-1}. Corollary $\ref{Coro3}$ provides the reformulation of problem \eqref{eq:FinalDualInnerMax-1}  (see \ref{apdx: Coro_proof} for a proof).
\begin{cor} \label{Coro3}
Problem \eqref{eq:FinalDualInnerMax-1} is equivalent to
\begin{subequations}\label{InnerMINLPnew}
\begin{align} 
 \min_{\alphab,\,\rhob,\,\betab,\,\gammab,\,\deltab} & \  \sum \limits_{i=1}^N \mu_i^{\mbox{\tiny d}} \rho_i+ \sum_{j=2}^N\sum_{i=1}^N \sum_{i'\ne i} \mut_{i,i'}  \alpha_{i,i'}x_{i,j-1}x_{i',j}  + \lambda \sum \limits_{i=1}^N \mut_{i,0}  x_{i,N} + \sum \limits_{i=1}^N \mut_{0,i} \psi_{0,i}  + \sum \limits_{j=1}^{N+2} \beta_j  \label{InnerMINLPnew:Obj}\\
\textup{s.t. \hspace{3mm}} & \ \alphab\in\R^{(N+1)\times(N+1)},\, \rhob\in\R^N,\, \betab\in\R^{N+2},\, \eqref{DHRASMINLP:Con1} - \eqref{DHRASMINLP:Con7}
\end{align}
\end{subequations}
\end{cor}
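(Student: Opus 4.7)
The plan is to apply Proposition~\ref{Prop2M} to rewrite the inner maximization in problem \eqref{eq:FinalDualInnerMax-1} as a minimization MILP, then interchange and merge the two nested minimizations, and finally simplify the combined objective using the structure of the routing variables $\xb$.

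First I would observe that for any fixed $(\alphab, \rhob)$, Proposition~\ref{Prop2M} expresses $\max_{\xib \in \calS} \{f(\xb, \ab, \xib) - \sum_i d_i \rho_i - \sum_{i,i'} t_{i,i'} \alpha_{i,i'}\}$ as the value of the MILP whose objective is \eqref{DHRAS_MINLP_Obj} and whose feasibility is \eqref{DHRASMINLP:Con1}--\eqref{DHRASMINLP:Con7}. Because both the outer problem in \eqref{eq:FinalDualInnerMax-1} and the Proposition~\ref{Prop2M} MILP are minimizations, I can merge them into a single joint minimization over $(\alphab, \rhob, \betab, \gammab, \deltab)$ without changing the optimal value. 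This yields an intermediate formulation with objective
\[
\sum_{i=1}^N \mu_i^d \rho_i + \sum_{i=1}^N \sum_{i'=1}^N \mu_{i,i'}^t \alpha_{i,i'} + \sum_{j=1}^{N+2} \beta_j + \sum_{i=1}^N \bigl[(\lambda-\alpha_{i,0})\tlb_{i,0} + \Delta t_{i,0}(\lambda-\alpha_{i,0})^+\bigr]x_{i,N}
\]
subject to \eqref{DHRASMINLP:Con1}--\eqref{DHRASMINLP:Con7}.

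Next I would simplify this objective by partitioning the travel-time dual multipliers into three groups and reducing each separately. \textbf{(a)} For customer-to-customer coefficients $\alpha_{i,i'}$ with $i,i'\in[N]$, $i\neq i'$, these variables enter constraints \eqref{DHRASMINLP:Con2}--\eqref{DHRASMINLP:Con3} only through the products $x_{i,j-1} x_{i',j}$, and since the routing constraints in $\calX$ allow at most one index $j$ with $x_{i,j-1} x_{i',j}=1$, the linear mean-penalty $\mu_{i,i'}^t \alpha_{i,i'}$ can be rewritten as $\sum_{j=2}^N \mu_{i,i'}^t \alpha_{i,i'}\, x_{i,j-1} x_{i',j}$ at optimality. \textbf{(b)} For customer-to-depot coefficients $\alpha_{i,0}$, I combine $\mu_{i,0}^t \alpha_{i,0}$ with $[(\lambda-\alpha_{i,0})\tlb_{i,0}+\Delta t_{i,0}(\lambda-\alpha_{i,0})^+]x_{i,N}$, substitute $z_i:=\lambda-\alpha_{i,0}$, and minimize the resulting piecewise-linear univariate expression; using $\tlb_{i,0}\leq \mu_{i,0}^t\leq \tub_{i,0}$, the minimum is attained at $z_i=0$, so the reduced contribution equals $\lambda\mu_{i,0}^t x_{i,N}$. \textbf{(c)} For depot-to-customer coefficients $\alpha_{0,i}$, which appear in \eqref{DHRASMINLP:Con1}--\eqref{DHRASMINLP:Con2} only through $x_{i,1}$, I introduce an auxiliary variable $\psi_{0,i}$ to linearize the bilinear product $\alpha_{0,i} x_{i,1}$ in the same spirit as step~(a), so that the linear mean-penalty becomes $\mu_{0,i}^t \psi_{0,i}$ in the objective.

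Collecting the three reductions produces exactly the objective \eqref{InnerMINLPnew:Obj} subject to \eqref{DHRASMINLP:Con1}--\eqref{DHRASMINLP:Con7}, giving \eqref{InnerMINLPnew}. The main obstacle will be step~(a): rigorously justifying that replacing $\mu_{i,i'}^t \alpha_{i,i'}$ with $\sum_{j=2}^N \mu_{i,i'}^t \alpha_{i,i'}\, x_{i,j-1} x_{i',j}$ does not change the optimal value. This requires a careful case analysis showing that for pairs $(i,i')$ not appearing consecutively in the route, the auxiliary variables $\gamma_{i,i',j,v}$ can be driven to zero by choosing $\alpha_{i,i'}$ large enough to make $\pi_{j,v}+\lambda-\alpha_{i,i'}\leq 0$ in \eqref{DHRASMINLP:Con6}, so those $\alpha$-variables effectively drop out of the constraint system and do not create unboundedness in the merged objective.
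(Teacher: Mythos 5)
Your overall route is the same as the paper's: merge the Proposition~\ref{Prop2M} reformulation into the outer minimization of \eqref{eq:FinalDualInnerMax-1}, and then analytically eliminate the depot\nobreakdash-bound multipliers $\alpha_{i,0}$. Your step~(b) is exactly the paper's argument (the paper phrases it as a two\nobreakdash-case monotonicity analysis around $\alpha_{i,0}=\lambda$ rather than via the substitution $z_i=\lambda-\alpha_{i,0}$, but the computation and the conclusion $\lambda\mut_{i,0}x_{i,N}$ are identical), and step~(c) is only the bookkeeping substitution $\psi_{0,i}=\alpha_{0,i}x_{i,1}$ that the paper performs when it linearizes the model. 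You are also right that step~(a) — passing from the unconditional penalty $\sum_{i,i'}\mut_{i,i'}\alpha_{i,i'}$ of \eqref{eq:FinalDualInnerMax-1} to the route\nobreakdash-decomposed penalty $\sum_{j}\sum_{i,i'}\mut_{i,i'}\alpha_{i,i'}x_{i,j-1}x_{i',j}$ of \eqref{InnerMINLPnew} — is the one point that genuinely needs an argument; the paper glosses over it by already working with the decomposed form inside the proof of Proposition~\ref{Prop2M}.

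However, the mechanism you propose for step~(a) does not work. Driving $\gamma_{i,i',j,v}$ to zero by taking $\alpha_{i,i'}$ large is irrelevant: for a pair $(i,i')$ that is not consecutive in the route, the $\gamma$\nobreakdash-terms in \eqref{DHRASMINLP:Con2}--\eqref{DHRASMINLP:Con3} are already multiplied by $x_{i,j-1}x_{i',j}=0$, so $\alpha_{i,i'}$ never enters the constraint system in a binding way no matter its value. The danger is the opposite one: if the objective retained the bare term $\mut_{i,i'}\alpha_{i,i'}$ for such a pair with no compensating term, the minimization would be unbounded below by sending $\alpha_{i,i'}\to-\infty$ (since $\mut_{i,i'}>0$), and making $\alpha_{i,i'}$ \emph{large} only worsens the objective. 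The correct argument is the exact analogue of your step~(b), applied \emph{before} discarding the inner supremum: in \eqref{eq:FinalDualInnerMax-1} a non\nobreakdash-traversed pair contributes
\begin{equation*}
\mut_{i,i'}\alpha_{i,i'}+\sup_{t_{i,i'}\in[\tlb_{i,i'},\tub_{i,i'}]}\bigl(-t_{i,i'}\alpha_{i,i'}\bigr)
=\begin{cases}(\mut_{i,i'}-\tlb_{i,i'})\,\alpha_{i,i'}, & \alpha_{i,i'}\geq 0,\\ (\mut_{i,i'}-\tub_{i,i'})\,\alpha_{i,i'}, & \alpha_{i,i'}\leq 0,\end{cases}
\end{equation*}
which is nonnegative because $\tlb_{i,i'}\leq\mut_{i,i'}\leq\tub_{i,i'}$ and vanishes at $\alpha_{i,i'}=0$. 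Hence the optimal choice for every non\nobreakdash-traversed pair is $\alpha_{i,i'}=0$ with zero contribution, which is precisely what the route\nobreakdash-decomposed objective encodes; only the traversed pairs (for which $\sum_j x_{i,j-1}x_{i',j}=1$) survive. With this repair your proof is complete and coincides with the paper's.
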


Combining  $\sup_{\Prob \in \calF(\calS,\mub) }\E_{\Prob} [f(\xb,\ab, \db) ]$  in the form of \eqref{InnerMINLPnew} with the outer minimization problem in \eqref{M-DHRAS}, we obtain a mixed-integer nonlinear program (MINLP).  Due to its large size and page limitation, we present this MINLP in \ref{Appx:Mac_MDHRAS}, where we also derive the following exact and tight MILP reformulation of this this MINLP (equivalently, M-DHRAS model).
\allowdisplaybreaks
\begin{subequations}\label{M_DHRAS_L}
\begin{align} 
 \min & \  \sum \limits_{i=1}^N \mud_i \rho_i+ \sum_{j=2}^N\sum_{i=1}^N \sum_{i'\ne i} \mut_{i,i'} \eta_{i,i',j-1,j}  + \lambda \sum \limits_{i=1}^N \mut_{i,0} x_{i,N} + \sum \limits_{i=1}^N \mut_{0,i} \alpha_{0,i} x_{i,1}  + \sum \limits_{j=1}^{N+2} \beta_j  \label{M_DHRAS_L:obj}\\
 \text{s.t.}  & \ \xb \in \calX, \ \ab \in \calA,\, \alphab\in\R^{(N+1)\times(N+1)},\, \rhob\in\R^N,\, \betab\in\R^{N+2}, \\ 
& \ \text{constraints }\eqref{M-DHRAS_MILP_Con2}-\eqref{M-DHRAS_MILP_Con5},\,\eqref{DHRASMINLP:Con5}-\eqref{DHRASMINLP:Con7},\,\eqref{MILP-MAC1M}-\eqref{MILP-MAC11M}.
\end{align}
\end{subequations}

\subsection{\textbf{DHRAS over 1-Wasserstein ambiguity (W-DHRAS)}}\label{WDHRAS}

In this section, we consider the case that $\mathbb{P}_{\xib}$ may be observed via a  small finite set $\big\{\hat{\xib}^1, \ldots, \hat{\xib}^R\big\}$ of $R$ i.i.d. samples, which may come from the limited historical realizations or a reference empirical distribution. Note that the empirical distribution defined on this data (or any other reference distribution) could serve as an estimator of the true distribution. To hedge against the estimation error, we robustify the optimization problem against all distributions $\Prob_{\xib}$ close to the empirical distribution $\Prob_{\xib}^R$.  Specifically, we construct a set of all distributions $\mathcal{P}(\mathcal{S})$ supported on the support $\calS$ of the unknown true distribution for which all distributions $\Prob_{\xib}$ have 1-Wasserstein distance that is less than or equal to $\epsilon$ from the reference distribution. We use 1-Wasserstein distance (i.e., we use $\ell_1$--norm in the definition of Wasserstein metric) because it often admits tractable reformulation in most real-world applications (see, e.g., \citealp{Daniel2020, hanasusanto2018conic, jiang2019data, saif2020data}). Next, we present our W-DHRAS model.
 
  Suppose that two probability distributions $\Q_1$ and $\Q_2$ are defined on a common support $\calS \in \mathbb{R}^N$, and let $|| \cdot ||_p$ represent the $p$-norm on $\mathbb{R}^N$ with $p  \geq 1$. Suppose that random vectors $\xib_1$ and $\xib_2$ follow $\Q_1$ and $\Q_2$ respectively. Then, the Wasserstein distance between $\Q_1$ and $\Q_2$, denoted as $W_p(\Q_1, \Q_2)$, represents the minimum transportation cost of moving from $\Q_1$ to $\Q_2$, where the cost of moving from $\xib_1$ to $\xib_2$ is measured by the norm $\norms{\xib_1-\xib_2}_p$. Mathematically, 
\begin{equation}
W_p(\Q_1, \Q_2):= \bigg ( \inf_{\Pi \in \mathcal{P}(\Q_1, \Q_2)} \E_{\Pi} \big[ \norm{\xib_1 -\xib_2}_p^p\big]  \bigg)^\frac{1}{p},
\end{equation}
where $\mathcal{P}(\Q_1, \Q_2) $ is the set of all joint distributions of ($\xib_1$, $\xib_2$) with marginals $\Q_1$ and $\Q_2$. Since we only observe a  set $\{\hat{\xib}^1, \ldots, \hat{\xib}^R\}$ of $R$ i.i.d. samples, we consider the following $p$-Wasserstein ambiguity set 
%
\begin{align}\label{c}
\calF_p (\hat{\Prob}_{\xib}^R, \epsilon)= \left\{ \Q_{\xib} \in \mathcal{P}(\calS): W_p(\Q_{\xib}, \hat{\Prob}_{\xib}^R) \leq \epsilon\right\},
\end{align}
where $ \mathcal{P}(\calS)$ is the set of all probability distributions on $\calS$, $\hat{\Prob}_{\xib}^R=\frac{1}{R} \sum_{r=1}^R \delta_{\hat{\xib}^r}$ is the empirical distribution of $\xib$ based on the $R$ i.i.d samples with $\delta$ being the Dirac measure, and $\epsilon >0$ is the radius of the ambiguity set. The set $\calF_p (\hat{\Prob}_{\xib}^R, \epsilon)$ can be viewed as the $p$-Wasserstein ball of radius $\epsilon$ centered at the empirical distribution $\hat{\Prob}_{\xib}^R$. Therefore, in some sense, one can think Wasserstein ball as the set of all distributions under which our estimation error is below $\epsilon$, where $\epsilon$ is the estimation error we seek protection against. A larger radius $\epsilon$ indicates that we seek more robust solutions. Using the ambiguity set $\calF_p(\hat{\Prob}_{\xib}^R, \epsilon)$, we formulate W-DHRAS as follows:
\begin{align}
(\text{W-DHRAS}) \ \ \ \hat{Z}(R, \epsilon)&= \min \limits_{\xb \in \calX, \ab \in \calA} \Bigg \{ \sup_{\Q_{\xib} \in \calF_p (\hat{\Prob}_{\xib}^R, \epsilon) }\E_{\Prob_{\xib}} \big[f(\xb,\ab, {\xib}) \big] \Bigg \}. \label{W-DHRAS}
\end{align}

In data-driven approaches such as W-DHRAS, we often seek asymptotic consistency. Specifically, we expect that as the sample size $R$ increases to infinity, the optimal value of the problem \eqref{W-DHRAS} $\hat{Z}(R, \epsilon)$ converges to $Z^*$ (the optimal value of the  SP model in \eqref{SP} with perfect knowledge of $\Prob_{\xib}$), and an optimal solution $(\xb,\ab)$ of  (W-DHRAS) converges to an optimal solution of problem \eqref{SP}. Additionally, if $\hat{Z}(R, \epsilon)>Z^*$ almost surely, then W-DHRAS provides a safe upper bound guarantee on the expected total cost with any finite data size $R$. Recall that our support set $\calS$ is non-empty, convex and compact. As such, we can use the existing theory in establishing the asymptotic consistency and finite sample guarantee of our W-DHRAS.  We refer the readers to  \ref{Proof_Lemma1}--\ref{Proof_Thrm2} for adapted proofs of these results.



\subsubsection{\textbf{MILP reformulation of W-DHRAS}}
In this section, we derive an equivalent MILP reformulation of our W-DHRAS model. First, we consider the inner maximization problem of W-DHRAS for a fixed $(\xb,\ab)\in\calX\times\calA$.
\begin{align}
\sup_{\Q_{\xib}\in \calF_1 (\hat{\Prob}_{\xib}^R, \epsilon) }\E_{\Q_{\xib}} [f(\xb,\ab, \xib) ] \label{InnerMaxW}
\end{align}
 In Proposition~\ref{Prop1}, we present an equivalent dual formulation of  \eqref{InnerMaxW}  (see \ref{Proof_Prop1} for a  proof).
\begin{prop}\label{Prop1} 
The optimal value of formulation  \eqref{InnerMaxW} equals that of the following formulation:
\begin{equation}\label{DualOfInner}
\inf_{\rho\geq 0} \Bigg\{ \epsilon \rho + \frac{1}{R} \sum_{r=1}^R \sup_{\xib \in \calS}  \bigg\{ f(\xb, \ab, \xib)-\rho \norms{\xib -\xibhat^r }_1  \bigg\}  \Bigg\}
\end{equation}
\end{prop}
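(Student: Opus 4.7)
The plan is to establish Proposition~\ref{Prop1} via standard Wasserstein duality, adapted to our recourse function $f(\xb,\ab,\xib)$ which, for fixed first-stage $(\xb,\ab)$, is the optimal value of the linear program \eqref{LP:Sec}. The key structural facts we will exploit are: (i) the reference measure $\hat{\Prob}_{\xib}^R$ is discrete with $R$ atoms, which makes couplings decompose atom-wise, and (ii) $\calS$ is nonempty, convex and compact, while $f(\xb,\ab,\cdot)$ is continuous (indeed, piecewise linear) in $\xib$ since it is the value of an LP whose right-hand side depends linearly on $\xib$. These will give us strong duality for the Lagrangian relaxation of the Wasserstein constraint.

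First, I would rewrite the inner supremum \eqref{InnerMaxW} by introducing the joint coupling $\Pi \in \mathcal{P}(\Q_{\xib},\hat{\Prob}_{\xib}^R)$ between a candidate distribution $\Q_{\xib}$ on $\calS$ and the empirical distribution. Since $\hat{\Prob}_{\xib}^R = \tfrac{1}{R}\sum_{r=1}^R \delta_{\xibhat^r}$, every such coupling $\Pi$ disintegrates as $\Pi(d\xib,d\hat{\xib}) = \tfrac{1}{R}\sum_{r=1}^R \Q_r(d\xib)\,\delta_{\xibhat^r}(d\hat{\xib})$ for some conditional distributions $\Q_r \in \mathcal{P}(\calS)$, and $\Q_{\xib} = \tfrac{1}{R}\sum_r \Q_r$. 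The Wasserstein constraint then reduces to $\tfrac{1}{R}\sum_{r=1}^R \int_\calS \norms{\xib-\xibhat^r}_1\,\Q_r(d\xib) \leq \epsilon$. Thus \eqref{InnerMaxW} becomes
\begin{align*}
\sup_{\{\Q_r\}\subset\mathcal{P}(\calS)} \ \frac{1}{R}\sum_{r=1}^R \int_\calS f(\xb,\ab,\xib)\,\Q_r(d\xib) \quad \text{s.t.} \quad \frac{1}{R}\sum_{r=1}^R \int_\calS \norms{\xib-\xibhat^r}_1\,\Q_r(d\xib) \leq \epsilon.
\end{align*}

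Next I would dualize the Wasserstein constraint with a multiplier $\rho \geq 0$, yielding the Lagrangian upper bound
\begin{align*}
\inf_{\rho\geq 0}\ \Bigg\{ \epsilon\rho + \sup_{\{\Q_r\}\subset\mathcal{P}(\calS)} \frac{1}{R}\sum_{r=1}^R \int_\calS \Big(f(\xb,\ab,\xib) - \rho\norms{\xib-\xibhat^r}_1\Big)\,\Q_r(d\xib)\Bigg\},
\end{align*}
and observe that the inner sup separates across $r$ and is attained, for each $r$, by a Dirac mass at any maximizer of $f(\xb,\ab,\xib)-\rho\norms{\xib-\xibhat^r}_1$ over $\calS$ (such a maximizer exists because the integrand is continuous and $\calS$ is compact). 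This collapses the inner problem to $\sup_{\xib\in\calS}\{f(\xb,\ab,\xib)-\rho\norms{\xib-\xibhat^r}_1\}$, giving exactly the right-hand side of \eqref{DualOfInner}.

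The remaining and main delicate step is verifying strong duality, i.e., that the $\le$ from weak duality is in fact equality. I would invoke the Wasserstein duality theorem of Esfahani and Kuhn (and the refinement by Gao and Kleywegt) for upper semicontinuous integrands on a closed convex support: the hypotheses are satisfied because $f(\xb,\ab,\cdot)$ is continuous on the compact set $\calS$ (as the optimal value of an LP whose coefficient data vary linearly and boundedly with $\xib$), and Slater's condition holds trivially since $\Q_r = \delta_{\xibhat^r}$ for all $r$ is strictly feasible ($0 < \epsilon$). Together these let us swap the $\inf_{\rho\geq 0}$ with the $\sup$, yielding the claimed equivalence. The main obstacle is precisely the strong duality justification --- ensuring $f(\xb,\ab,\cdot)$ is regular enough over $\calS$ and citing (or, if needed, adapting) the appropriate Wasserstein duality result; everything else is mechanical manipulation of the coupling decomposition.
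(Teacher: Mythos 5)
Your proposal is correct and follows essentially the same route as the paper's proof: disintegrate the coupling against the discrete empirical measure into conditionals $\Q_r$, reduce the Wasserstein ball to a single transport-cost constraint, Lagrangianize with $\rho\geq 0$, and collapse the per-atom suprema over $\mathcal{P}(\calS)$ to pointwise suprema over $\calS$. The only difference is that you spell out the strong-duality justification (continuity of $f(\xb,\ab,\cdot)$ on the compact convex $\calS$ plus Slater via $\Q_r=\delta_{\xibhat^r}$ and $\epsilon>0$), whereas the paper invokes it as a ``standard strong duality argument''; your added care here is welcome rather than a deviation.
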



Note that $f(\xb, \ab, \xib)-\rho \norms{\xib -\xibhat^r}_1$ is neither convex nor concave in $\xib$. Thus, formulation \eqref{DualOfInner} is potentially challenging to solve because it requires solving $R$ non-convex  optimization problems. Given that the support of service duration and travel times are rectangular and finite, we next show that we can recast these problems as linear programs for fixed $\rho$ and $(\xb,\ab)\in\calX\times\calA$. In what follows, we provide a high-level road map of the reformulation, and we relegate the notationally heavy proofs and details to Appendices. First, for fixed $(\xb,\ab)\in\calX\times\calA$ and $\rho \geq 0$, we denote $g_r(\rho, \xb, \ab)= \sup_{\xi \in \calS}  \{ f(\xb, \ab, \xib)-\rho \norms{ \xib -\hat{\xib}^r }_1  \}$. Given the dual formulation of $f(\xb, \ab, \xib)$ in \eqref{Dual2ndStage}, and using the dual of $f(\xb,\ab,\xib)$ and using the same proof techniques in Proposition \ref{Prop2M}, we can reformulate $g_r(\rho, \xb, \ab)$ (see \ref{Proof_Prop4M} for a detailed proof).
%
%

\begin{prop}\label{Prop4M}
The maximization problem $g_r(\rho, \xb, \ab)$ is equivalent to  
\allowdisplaybreaks
\begin{subequations}\label{W_inner_r_LP}
\begin{align} 
 \min_{\betab^r,\,\ub^r,\,\nub^r}  &\  \sum \limits_{j=1}^{N+2} \beta^r_j + \sum_{i=1}^N u_{i,0}^r x_{i,N}  \\
\textup{s.t.    } & \ \beta^r_1 \geq -a_1  \pi_{1,1} +\sum_{i=1}^N u_{0,i,1,1}^r x_{i,1} , \label{W_inner_r_LP:Con1} \\
& \ \sum_{j=1}^v \beta^r_j \geq -a_1  \pi_{1,v} + \sum_{j=2}^{\min(v,N+1)} \big ( a_{j-1}-a_j \big) \pi_{j,v} + \sum_{i=1}^N u_{0,i,1,v}^r x_{i,1} \nonumber \\
& \ \qquad \   + \sum_{j=2}^{\min(v,N+1)} \sum_{i=1}^N \sum_{i' \neq i} u_{i,i',j,v}^r x_{i,j-1}x_{i', j}  + \sum_{j=2}^{\min(v,N+1)} \sum_{i=1}^N \nu_{i,j,v}^r x_{i,j-1}, \ \
 \forall v \in [2, N+2]_{\Z},   \label{W_inner_r_LP:Con2} \\
& \ \sum_{j=k}^v\beta^r_j \geq \sum_{j=k}^{\min(v,N+1)} \big ( a_{j-1}-a_j\big)  \pi_{j,v}+\sum_{j=k}^{\min(v,N+1)} \sum_{i=1}^N \nu_{i,j,v}^r x_{i,j-1}  \nonumber \\
& \ \qquad \   + \sum_{j=k}^{\min(v,N+1)} \sum_{i=1}^N \sum_{i' \neq i} u_{i,i',j,v}x_{i,j-1}x_{i',j},\quad  \forall k\in[2,N+1]_\Z,\,\forall v\in[k,N+2]_\Z  ,\label{W_inner_r_LP:Con3} \\
& \ \beta^r_{N+2} \geq 0 , \label{W_inner_r_LP:Con4} \\
& \ u^r_{i,0} \geq \lambda\that^r_{i,0},\quad  u^r_{i,0} \geq \lambda\tub_{i,0}-\rho(\tub_{i,0}-\that^r_{i,0}),\quad\forall i\in[N], \label{W_inner_r_LP:Con5}  \\
& \ u_{0,i,1,v}^r  \geq (\pi_{1,v}+\lambda)\tlb_{0,i}-\rho(\that_{0,i}^r-\tlb_{0,i}),\quad  u_{0,i,1,v}^r  \geq (\pi_{1,v}+\lambda)\that_{0,i}^r,\nonumber\\
& \ u_{0,i,1,v}^r  \geq (\pi_{1,v}+\lambda)\tub_{0,i}-\rho(\tub_{0,i}-\that^r_{0,i}),\quad \forall i\in[N],\, v\in[N+2], \label{W_inner_r_LP:Con7} \\
& \ u_{i,i',j,v}^r   \geq (\pi_{j,v}+\lambda)\tlb_{i,i'}-\rho(\that_{i,i'}^r-\tlb_{i,i'}),\quad u_{i,i',j,v}^r   \geq (\pi_{j,v}+\lambda)\tub_{i,i'}-\rho(\tub_{i,i'}-\that^r_{i,i'}), \nonumber\\
& \ u_{i,i',j,v}^r   \geq (\pi_{j,v}+\lambda)\that_{i,i'}^r,\quad \forall i\in[N],\, i'\in[N]\setminus\{i\},\, j\in[2,N+1]_\Z,\, v\in[j,N+2]_\Z ,\label{W_inner_r_LP:Con9} \\
& \ \nu_{i,j,v}^r    \geq \pi_{j,v}\dlb_{i}-\rho(\dhat_{i}^r-\dlb_{i}),\quad 
 \nu_{i,j,v}^r    \geq \pi_{j,v}\dub_{i}-\rho(\dub_{i}-\dhat^r_{i}), \nonumber \\
& \ \nu_{i,j,v}^r    \geq \pi_{j,v}\dhat_{i}^r,\quad \forall i\in[N],\, j\in[2,N+1]_\Z,\, v\in[j,N+2]_\Z. \label{W_inner_r_LP:Con11} 
\end{align}
\end{subequations}
\end{prop}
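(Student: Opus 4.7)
The plan is to mirror the strategy used in the proof of Proposition~\ref{Prop2M}, with the additional ingredient of handling the Wasserstein penalty $-\rho\|\xib-\hat{\xib}^r\|_1$ coordinate-by-coordinate on the rectangular support $\calS$. First, I would substitute the LP dual of $f(\xb,\ab,\xib)$ from \eqref{Dual2ndStage} into $g_r(\rho,\xb,\ab)$, so that
\begin{equation*}
g_r(\rho,\xb,\ab)=\sup_{\xib\in\calS}\sup_{\yb\in\calY}\Bigl\{\,\textup{obj}_{\text{dual}}(\xb,\ab,\xib,\yb)-\rho\|\xib-\hat{\xib}^r\|_1\,\Bigr\},
\end{equation*}
and then interchange the two suprema. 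Because the dual objective is linear in $\yb$ over the polytope $\calY$, the inner supremum is attained at an extreme point of $\calY$. The extreme point enumeration has already been carried out in the proof of Proposition~\ref{Prop2M}: each vertex is encoded by a pair of indices $(k,v)$ with $1\le k\le v\le N+2$, which corresponds to the coefficient pattern $\pi_{j,v}=-c_v^{\mathrm{u}}+\sum_{l=j}^{v-1}c_l^{\mathrm{w}}$ appearing in constraints \eqref{DHRASMINLP:Con1}--\eqref{DHRASMINLP:Con3}. Dualizing this maximum-over-vertices via an epigraph variable $\beta_j^r$ for each index $j$ produces the family of constraints \eqref{W_inner_r_LP:Con1}--\eqref{W_inner_r_LP:Con4}; this portion of the argument is essentially identical to Proposition~\ref{Prop2M} and can be reused verbatim.

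Next, for each fixed extreme point of $\calY$ the residual inner problem $\sup_{\xib\in\calS}\{\textup{linear}(\xib)-\rho\|\xib-\hat{\xib}^r\|_1\}$ is fully separable across the coordinates of $\xib=(\db,\tb)$, since both the support $\calS$ and the $\ell_1$-norm decouple. Concretely, for each coordinate $\xi_m$ with interval $[\underline{\xi}_m,\overline{\xi}_m]$, with empirical value $\hat{\xi}_m^r$, and with linear coefficient $c_m$ induced by the chosen extreme point (and multiplied by the routing binaries $x_{i,j-1}x_{i',j}$ or $x_{i,1}$ or $x_{i,N}$ as appropriate), I would need to evaluate
\begin{equation*}
\sup_{\xi_m\in[\underline{\xi}_m,\overline{\xi}_m]}\bigl\{\,c_m\xi_m-\rho|\xi_m-\hat{\xi}_m^r|\,\bigr\}.
\end{equation*}
This piecewise-linear concave univariate program attains its optimum at one of the three breakpoints $\underline{\xi}_m$, $\hat{\xi}_m^r$, or $\overline{\xi}_m$, giving a maximum of the three values $c_m\underline{\xi}_m-\rho(\hat{\xi}_m^r-\underline{\xi}_m)$, $c_m\hat{\xi}_m^r$, and $c_m\overline{\xi}_m-\rho(\overline{\xi}_m-\hat{\xi}_m^r)$. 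Introducing epigraph variables $u^r_{i,0}$, $u^r_{0,i,1,v}$, $u^r_{i,i',j,v}$, and $\nu^r_{i,j,v}$ for each of these coordinate-level maxima produces exactly the three-inequality blocks \eqref{W_inner_r_LP:Con7}, \eqref{W_inner_r_LP:Con9}, and \eqref{W_inner_r_LP:Con11}. For the terminal arc variable $u^r_{i,0}$ only two inequalities \eqref{W_inner_r_LP:Con5} are required, because the coefficient there is always $\lambda\ge 0$, so the breakpoint $\underline{t}_{i,0}$ is dominated and can be dropped from the epigraph representation.

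Finally, I would collect the extreme-point epigraph constraints \eqref{W_inner_r_LP:Con1}--\eqref{W_inner_r_LP:Con4} together with the coordinate-level epigraph constraints \eqref{W_inner_r_LP:Con5}--\eqref{W_inner_r_LP:Con11}, and verify that the resulting minimization attains the same value as $g_r(\rho,\xb,\ab)$: the $\ge$ direction follows because any feasible $(\betab^r,\ub^r,\nub^r)$ upper-bounds the objective at every $(\xib,\yb)$, and the reverse direction follows by plugging in the tight values at the optimal extreme point and optimal breakpoints, which certify attainment. The main obstacle, as in Proposition~\ref{Prop2M}, is careful bookkeeping: tracking which coefficient (involving $\pi_{j,v}$, $\lambda$, and the routing variables) multiplies which coordinate of $\xib$ at each extreme point, and ensuring that all three candidate breakpoints are correctly represented as linear inequalities. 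Once this mapping is fixed, the derivation of each block is mechanical and mirrors the mean-support case, with the replacement of the dual multipliers $\alpha_{i,i'}$ and $\rho_i$ by the Wasserstein-penalized breakpoint inequalities.
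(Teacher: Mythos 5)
Your proposal is correct and follows essentially the same route as the paper's proof: reuse the extreme-point enumeration of $\calY$ from Proposition~\ref{Prop2M}, exploit the coordinate-wise separability of the box support and the $\ell_1$ penalty to reduce each coordinate to a three-breakpoint (or, for the return arc with coefficient $\lambda\ge 0$, two-breakpoint) maximum, and then dualize the totally unimodular vertex-selection program while representing the breakpoint maxima by epigraph inequalities. The only cosmetic difference is the order of operations (you dualize the vertex maximum before introducing the coordinate epigraphs, whereas the paper substitutes the explicit coordinate maxima into the integer program first and dualizes afterwards), which does not change the argument.
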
 

Summing $g_r(\rho,\xb,\ab)$ in the form of Proposition \ref{Prop4M} over $r$ and combining it with the outer minimization in \eqref{DualOfInner} and \eqref{W-DHRAS}, we derive MINLP reformulation of the W-DHRAS. Again due to its large size, we present this MINLP in \ref{Appx:Mac_WDHRAS}, where we also derive the following tight and exact MILP reformulation of the W-DHRAS model.
%
\allowdisplaybreaks
\begin{subequations}\label{W_DHRAS_L}
\begin{align}
 \min &\ \ \epsilon \rho + \frac{1}{R} \sum_{r=1}^R \left(\sum_{i=1}^N \psi^r_{i,0} + \sum_{j=1}^{N+2}\beta_j^r \right)  \\
\text{s.t.} &\ \  \xb\in\calX,\,\ab\in\calA,\, \rho\geq 0,\, \betab^r\in\R^{N+2},\quad\forall r\in[R], \\
& \ \ \text{constraints }\eqref{WMILP-MAC1M} -\eqref{WMILP-MAC9M}, \,\eqref{W_inner_r_LP:Con5}-\eqref{W_inner_r_LP:Con11},\, \eqref{W-DHRAS_MILP_Con2}-\eqref{W-DHRAS_MILP_Con5},\, \forall r\in[R]. \label{W_DHRAS_L:Con2} 
\end{align}
\end{subequations}

\begin{rem}
The use of Wasserstein ambiguity is useful in modeling uncertainty with limited data. In this work, we only consider the $1$-Wasserstein ambiguity set (i.e., $p=1$) since for general $p>2$, it usually results in a non-linear and complex reformulation. In the HRAS problem, routing decision is binary and hence, it could result in mixed-integer non-linear formulation that is more computationally expensive than MILP in general. Also, it is not reasonable to consider $p=\infty$ since the term $\norm{\xib_1-\xib_2}_\infty$ only measure the maximum difference of one particular entry. However, in our problem, every random parameter plays a role in the optimization model and differences in each service and travel time should be considered.
\end{rem}

\section{Numerical Experiments} \label{sec4:Exp}
Our computational study's primary objective is to compare the performance of the proposed DRO models (M-DHRAS and W-DHRAS) and a sample average approximation (SAA) model, and derive insights into HRAS. The SAA model solves model \eqref{SP} with $\mathbb{P}_{\xib}$ replaced by an empirical distribution based on $N$ samples of the random parameters (see \ref{apdx: SAA} for the formulation). For simplicity, we call the SAA model as the SP model. We focus on HRAS instances where the sample size is possibly small or we do not have enough data to model distributions accurately, which is often seen in healthcare applications. However, we also test the computational performance of SP under a large sample size. In Section \ref{subsection:Expt_Setup}, we describe the set of HRAS instances that we constructed and discuss other experimental setup. In Section \ref{subsection:Wass_Eps}, we examine the choice of $\epsilon$ in W-DHRAS model and the corresponding effect on the out-of-sample simulation performance. In Section~\ref{subsection:AppointmentTime}, we analyze the appointment time structure. In Section \ref{subsection:Out-of-sample}, we analyze the optimal solutions of the models and then compare their out-of-sample simulation performance. We also discuss the reliability of the models in Section \ref{subsection:Reliability}. In Section \ref{subsection:CPU_Time}, we compare the computational performance of the three models.

\subsection{\textbf{Description of the experiments}} \label{subsection:Expt_Setup}
We construct HRAS instances based on benchmarks, parameters settings and assumptions made in recent related literature (see, e.g., \citealp{jiang2019data}, \citealp{zhan2020home}, \citealp{zhan2018vehicle}). The average number of customers that an operator may visit per day is often less than six in home health care and banking, and less than 10 in repair service \citep{NAH2010, yuan2015branch,zhan2020home}. Accordingly, we consider problem instances with 6, 8 and 10 customers. For example, it is often not feasible for a caregiver to visit more than 10 patients (often 6)  a day considering care duration at each location and travel time between patients in home health care applications. However, we also test the computational performance of the proposed models under unrealistic larger instances of HRAS. We consider two different cost structures for waiting, idling and overtime in the objective function:  (a) $(\cw_j,\cu_j,\co)=(2,1,20)$ (\citealp{jiang2017integer}, \citealp{jiang2019data}), and (b) $(\cw_j,\cu_j,\co)=(1,5,7.5)$ (\citealp{shehadeh2020distributionally}). For the transportation cost, we consider $\lambda\in\{0.5,1,2\}$ (\citealp{zhan2020home}).  We set  $L$ to 8 hours as in \cite{zhan2018vehicle}.

We use the lognormal distribution (\citealp{jiang2019data}) for the service time $d_i$ truncated on the interval $[10,50]$ with mean $\mu$ and $\sigma=0.5\mu$, where $\mu$ is generated from $U[25,35]$ ($U[a,b]$ refers to uniform distribution over the interval $[a,b]$). Our model works with any choices of the range $[10,50]$. Previous studies such as \cite{zhan2020home} assumes deterministic travel time. In our study, we generate the random travel time $t_{i,i'}$ from $U[15,25]$. That is, we assume that customers are fairly separated and traveling from one place to the other takes $20$ minutes on average (though our models can solve instances with any ranges and distribution of travel time). This is also consistent with prior and recent literature. \cite{nikzad2021matheuristic} particularly point out that customers within a service region form a basic unit or a cluster that share the same distribution of travel time, which is seen in urban areas. We round each generated parameter to the nearest integer.

We use the same upper and lower bounds of service time $[\dlb_i,\dub_i]=[10,50]$ and travel time $[\tlb_{i,i'},\tub_{i,i'}]=[15,25]$ in M-DHRAS and W-DHRAS. In the M-DHRAS model, the mean parameters $\mudb$ and $\mutb$ in the ambiguity set are set as the sample mean of the data. We also introduce new symmetry-breaking constraints (see \ref{apdx: SymBreak}) to break the symmetry in the routing decision. We implemented the three proposed models in AMPL programming language and use CPLEX (version 12.7.0.0) solver with the default setting. The relative MIP gap tolerance is set to $0.02$ while most of the instances have a terminal relative MIP gap tolerance very close to $0$. All the experiments were conducted on a computer with AMD Opteron 2.0 GHz CPU and 16 Gb memory. The time limit for solving each instance is set to 2 hours.


\subsection{\textbf{Effect of $\epsilon$ in W-DHRAS model}} \label{subsection:Wass_Eps}

In the W-DHRAS model, there is one parameter in the uncertainty set that serves as an input: the Wasserstein ball's radius $\epsilon$. In this section, we demonstrate the effect of $\epsilon$ on the out-of-sample performance of the W-DHRAS's optimal solution, $(\hat{\xb} (\epsilon, R),\hat{\ab} (\epsilon, R))$, with respect to the radius $\epsilon$. For illustrative purposes, we focus on one instance of $6$ customers with cost structure $(\cw_j,\cu_j,\co)=(2,1,20)$ and $\lambda=0.5$.

For each $\epsilon \in \{0.01, 0.02, \dots, 0.09, 0.1, \dots, 0.9,1,\dots,10\}$ (i.e., log-scaled interval as in \citealp{esfahani2018data} and \citealp{jiang2019data}), we evaluate the out-of-sample performance  as follows. First, for each customer, we randomly generate $30$ data sets of service duration and travel time scenarios, each consisting of $R\in \{ 5,20,50  \}$ scenarios. We generate these data sets under the same settings described in Section \ref{subsection:Expt_Setup}. Second, we solve the W-DHRAS model in \eqref{W_DHRAS_L} using the generated data sets under each of the candidate Wasserstein radius $\epsilon$. Finally, we fix the first-stage variables to the optimal solution of each instance, and then re-optimize the second-stage of the SP using 10,000  out-of-sample (unseen) data.  This is to compute the corresponding out-of-sample overtime, idle time, travel time, and waiting time and hence, the second-stage cost. 
\begin{figure}
    \centering
    \begin{subfigure}[b]{0.3\textwidth}
       \includegraphics[scale=0.48]{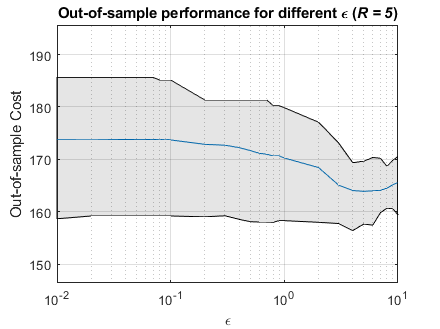}
       \caption{$R=5$}
       \label{fig:Wass_eps_5}
    \end{subfigure}
    \begin{subfigure}[b]{0.3\textwidth}
       \includegraphics[scale=0.48]{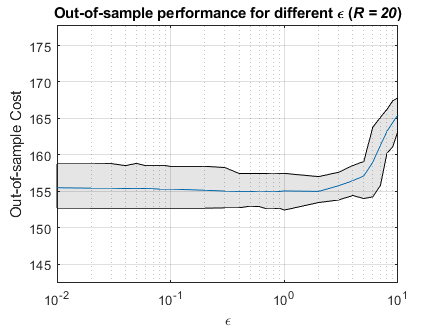}
       \caption{$R=20$}
       \label{fig:Wass_eps_20}
    \end{subfigure}
    \begin{subfigure}[b]{0.3\textwidth}
       \includegraphics[scale=0.48]{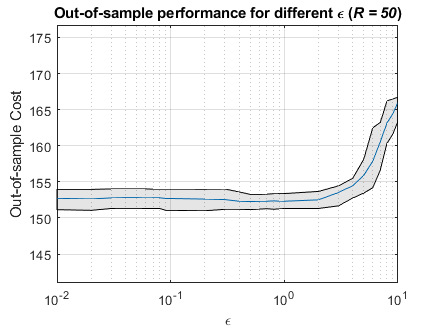}
       \caption{$R=50$}
       \label{fig:Wass_eps_50}
    \end{subfigure}
    \caption{Performance of W-DHRAS with different choices of $\epsilon$}
    \label{fig:Wass_eps}
\end{figure}

Figure \ref{fig:Wass_eps} illustrates the out-of-sample performance with $R\in\{5,20,50\}$ under different choices of $\epsilon$. The blue line represents the mean of the 30 out-of-sample costs, while the shaded region is the area between the 20th and 80th percentiles of the 30 out-of-sample costs. It is quite evident that the out-of-sample performance first decreases with $\epsilon$ and then increases after some value of $\epsilon$. This pattern is often observed in the literature (see, e.g., \citealp{esfahani2018data} and \citealp{jiang2019data}). The values of $\epsilon$ which gives the smallest out-of-sample cost with $R=5$, $20$ and $50$ are respectively $5$ (Figure \ref{fig:Wass_eps_5}), $0.6$ (Figure \ref{fig:Wass_eps_20}) and $0.5$ (Figure \ref{fig:Wass_eps_50}) respectively. This decrease in $\epsilon$ with the increase in $R$ is not surprising. Intuitively, a small sample provides little information on the true distribution, and thus a larger $\epsilon$ produces robust solutions that better hedge against ambiguity. In contrast, with a larger sample, we have more information from the data, and so we can make less conservative decisions using a smaller $\epsilon$ value. As such, one should choose a larger (smaller) $\epsilon$ with a small (large) sample. Indeed, we can see that the out-of-sample cost for $R=20$ is smaller than that of $R=5$ in most of the cases, which indicates that a larger sample size could give a better out-of-sample performance. Moreover, this shows that when sufficient data is available, one could employ the SP model directly (with $\epsilon=0$). Thus, as mentioned earlier, W-DHRAS is particularly useful when there is a small data on random parameters.

In practice, decision-makers do not often have optimization expertise (or time) to run the above procedure (or other iterative or cross validation procedures). Additionally, we do not have full distributional information most of the time but only a small set of data samples. In the following experiments, we pick three different values of $\epsilon$, namely $0.5$, $5$, and $50$, which captures different extents of robustness of the W-DHRAS model. For brevity, we label them as W-DHRAS(0.5), W-DHRAS(5) and W-DHRAS(50) respectively.

\subsection{\textbf{Appointment time structure}}\label{subsection:AppointmentTime}
In this section, we analyze the optimal appointment structure produced by the SP, M-DHRAS, and W-DHRAS models.  For illustrative purposes and brevity, we present results under $(\cw_j,\cu_j,\co)=(2,1,20)$ and $(\cw_j,\cu_j,\co)=(1,5,7.5)$ with $\lambda=2$. We observe similar results with other choices of $\lambda$ (see \ref{apdx:AT_Structure}). Figures~\ref{fig:AppointmentTime1_lam2} and \ref{fig:AppointmentTime2_lam2} present the optimal schedules of the operator produced by the SP and DRO models under  $(\cw_j,\cu_j,\co)=(2,1,20)$ and $(\cw_j,\cu_j,\co)=(1,5,7.5)$, respectively. The point $(x,y)=(i$,  inter-arrival time$)$ of every schedule in each subfigure corresponds to the mean optimal inter-arrival time (i.e., differences between the scheduled service start times of two consecutive customers, $I_j=a_j-a_{j-1}$ for $i\in[N]$ with $a_0=0$). 
\begin{figure}[t]
    \hspace{-10mm}
    \includegraphics[scale=0.60]{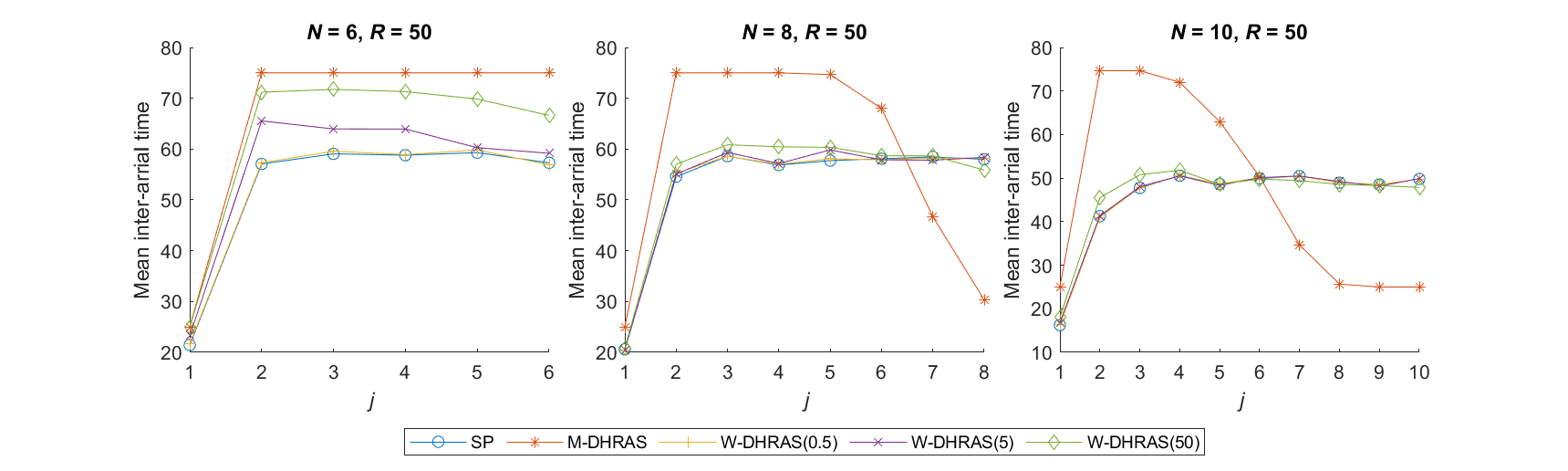}
    \caption{Mean inter-arrival times $a_j-a_{j-1}$ with $R=50$ under $(\cw_j,\cu_j,\co)=(2,1,20)$ and $\lambda=2$}
    \label{fig:AppointmentTime1_lam2}
\end{figure}
\begin{figure}[t!]
    \hspace{-10mm}
    \includegraphics[scale=0.60]{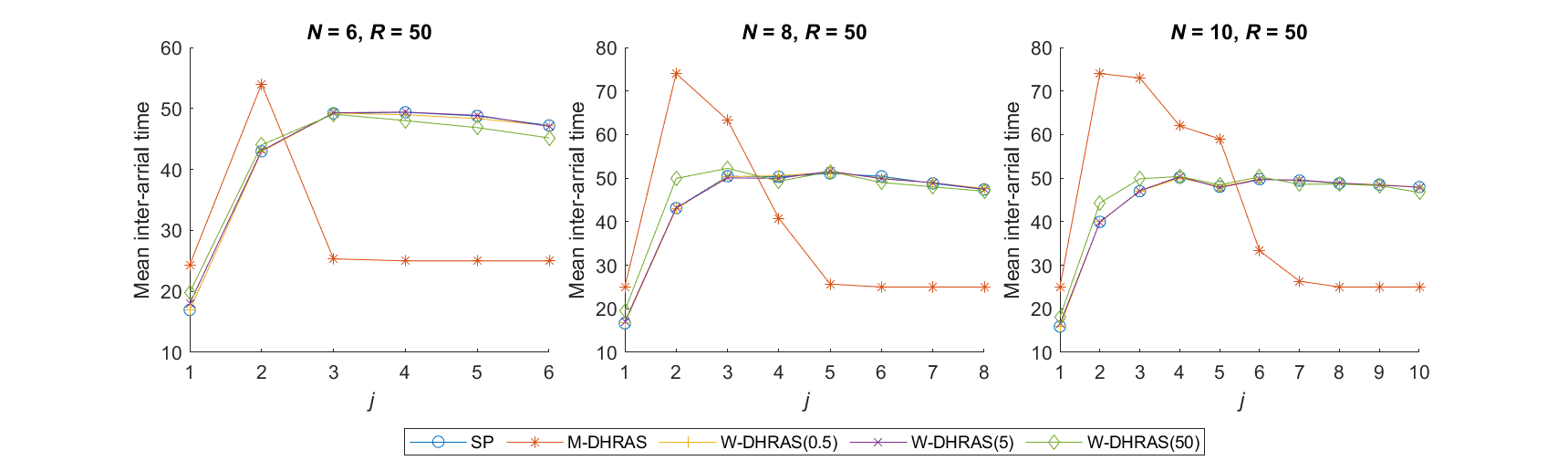}
    \caption{Mean inter-arrival times $a_j-a_{j-1}$ with $R=50$ under $(\cw_j,\cu_j,\co)=(1,5,7.5)$ and $\lambda=2$}
    \label{fig:AppointmentTime2_lam2}
\end{figure}

We first analyze the results under $(\cw_j,\cu_j,\co)=(2,1,20)$ in Figure~\ref{fig:AppointmentTime1_lam2}. First, all models assign less time for the first customer than the subsequent customers and more time between the first and second customers. The models distribute (roughly) equal time between the middle customers (e.g., customers 2-5 when $N=6$ and customers 3-7 when $N=8$), and schedule less time between the last 2 to 3 customers, especially when $N=10$ for M-DHRAS, though SP allocates (roughly) equal time for both middle and last customers. Second, it is evident that M-DHRAS is the most conservative model and tends to schedule more time between customers when $N=6$ than the SP and W-DHRAS models. Also, M-DHRAS model yields a dome-shaped inter-arrival time structure (i.e., the time between customers first increases then decreases) when $N=8$ and $N=10$. Third, the SP model always schedules less time between the customers than the DRO models (except possibly the last few customers). This is not surprising since the SP model seeks risk-neutral decisions. Fourth, the W-DHRAS model yields a similar pattern as the SP model; however, it schedules more time between customers than the SP as $\epsilon$ increases. This makes sense because as $\epsilon$ increases, the W-DHRAS model becomes more conservative and schedules more time between appointments. This demonstrates that the W-DHRAS model is more conservative in hedging ambiguity than the SP model but less conservative than the M-DHRAS model. In the next section, we show that the W-DHRAS appointment pattern yields the best out-of-sample performance under various distributions.
 
Next, we analyze the results under $(\cw_j,\cu_j,\co)=(1,5,7.5)$ in Figure \ref{fig:AppointmentTime2_lam2}. We observe different patterns under this cost structure. All models assign more time for the first few customers but gradually less time for the middle and last few customers. The pattern is different from the cost structure, mainly due to a significantly higher idling cost relative to the waiting cost. However, we can still conclude that M-DHRAS is the most conservative model, but instead of allocating more time, it allocates less time between customers to hedge against scenarios that yields long idling time. Third, we observe that the dome-shaped inter-arrival time structure for all models, which is particularly prominent in $N=6$. Forth, we note that W-DHRAS models with $\epsilon=0.5$ and $\epsilon=5$ roughly allocates the same amount of time between customers as the SP model. However, with $\epsilon=50$, W-DHRAS model tends to allocate more time for the first few customers but fewer time for the last few customers.

\subsection{\textbf{Out-of-sample performance}}\label{subsection:Out-of-sample}
In this section, we compare the optimal solutions of the three models we analyzed in the previous section under limited historical data.  We test the out-of-sample performance (i.e.,  the objective value obtained by simulating the optimal solution of a model under a larger unseen data) as follows. For each $N\in\{6,8,10\}$ and $R \in \{5, 10, 20, 50\}$, we generate $30$ SP, M-DHRAS and W-DHRAS instances using the same parameters settings described in Section \ref{subsection:Expt_Setup}. Note that the sample sizes are not large to mimic the situation where the decision-making possesses limited data. We solve each instance and obtain the optimal first-stage $(\xb,\ab)$ decision. Fixing the first-stage decision to $(\xb,\ab)$, we then re-optimize the second-stage of the SP using the following five sets of  $10,000$ samples.  In the first set, we assume that the data we rely on in the optimization comes from the true distribution. We use data from Sets 2-5 to evaluate the performance of the models under the case when the data we rely on in the optimization may follow a biased or misspecified distribution different from the true distribution. 

\begin{enumerate}\itemsep0em
    \item [Set 1.] We assume perfect information for the distributions. That is, we generate the $10,000$ samples from the same distribution we use in the optimization as discussed in Section \ref{subsection:Expt_Setup}. This simulation assumes that the data comes from the true unknown distribution.

    \item [Set 2.] In this set, we assume that we have misspecified the distribution of the travel time in the optimization. We generate $t_{i,i'}$ from $U[25,35]$ instead of $U[15,25]$. That is, the average travel time takes 10 minutes longer than usual. This situation (shift in the travel time range) might be seen in practice due to unexpected traffic congestion (e.g., caused by traffic accidents, weather conditions etc.). 

    \item [Set 3.]In this set, we assume that we have misspecified both the service and travel times distributions in the optimization. Specifically, we follow a similar out-of-sample simulation testing procedure described in \cite{wang2020distributionally} and perturb the support of the random travel and service times by a parameter $\delta$ as $[(1-\delta)$ lower bound, $(1+\delta)$ upper bound], where $\delta\in\{0.1,0.25,0.5\}$. A higher value of $\delta$ corresponds to a higher variation level.
\end{enumerate}

For brevity, we focus on HRAS instances of $N = 6$ customers (i.e., the average number of customers that a home service operator often visits per day; see Section \ref{subsection:Expt_Setup}) and present results under $(\cw_j,\cu_j,\co)=(2,1,20)$ and $\lambda=2$. We observe similar out-of-sample performance with $N=8$ and $N=10$, and under other values of $\lambda$ as well as two additional sets of misspecified distribution (see these results in  \ref{apdx:OutSample}). Figure \ref{fig:OC_1} shows the results using Set 1, where the shaded region represents the 20\% and 80\% percentiles of the out-of-sample cost. Obviously, the optimal solutions of the M-DHRAS model have poor out-of-sample performance. Recall from Section \ref{subsection:AppointmentTime} that M-DHRAS schedules a long time between customers, which, as shown in Figure \ref{fig:IWO_1}, yields a significant amount of operator idle time (indicating poor utilization of the operator's time). The out-of-sample performance of the W-DHRAS depends on $\epsilon$, which intuitively follows from the observation that the optimal solutions depend on $\epsilon$ (Section \ref{WDHRAS}). When $\epsilon=0.5$, the performance of W-DHRAS is similar to SP, while a more conservative choice of $\epsilon$ such as $50$ yields a poorer performance as compared to SP. When $\epsilon=5$,  W-DHRAS  outperforms SP, especially when the sample size is small. This demonstrates that the W-DHRAS model has a superior performance when the data size is small (i.e., when there is limited data on random parameters).
\begin{figure}[t]
    \hspace{-10mm}
    \includegraphics[scale=0.60]{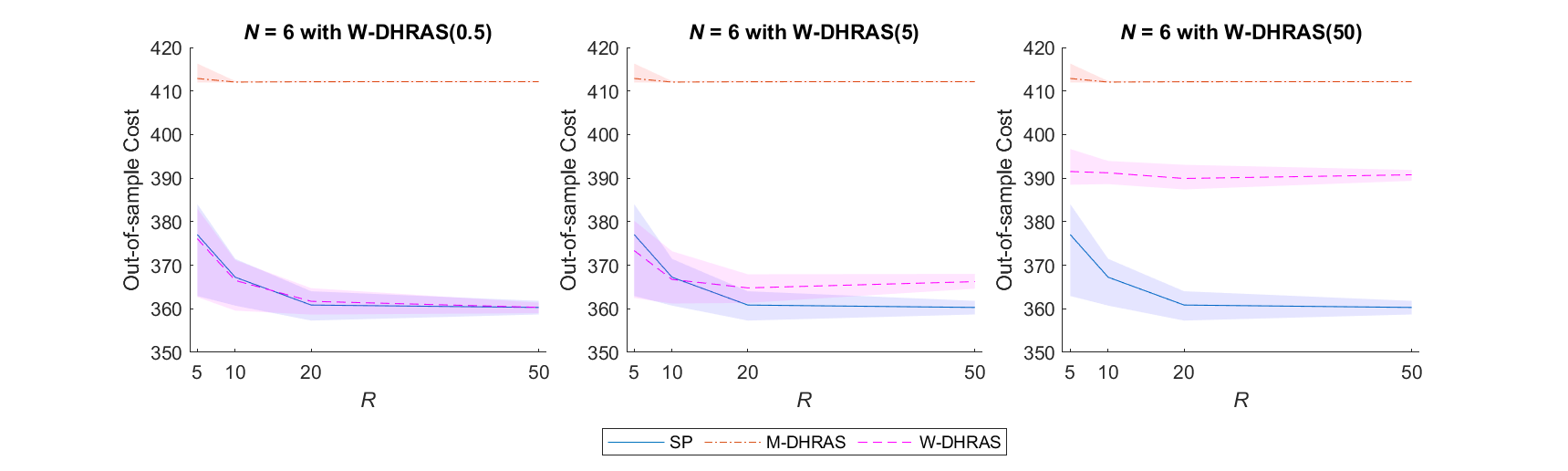}
    \caption{Out-of-sample cost with cost structure (a) and $\lambda=2$ under Set 1}
    \label{fig:OC_1}
\end{figure}
\begin{figure}[t]
    \hspace{-10mm}
    \includegraphics[scale=0.60]{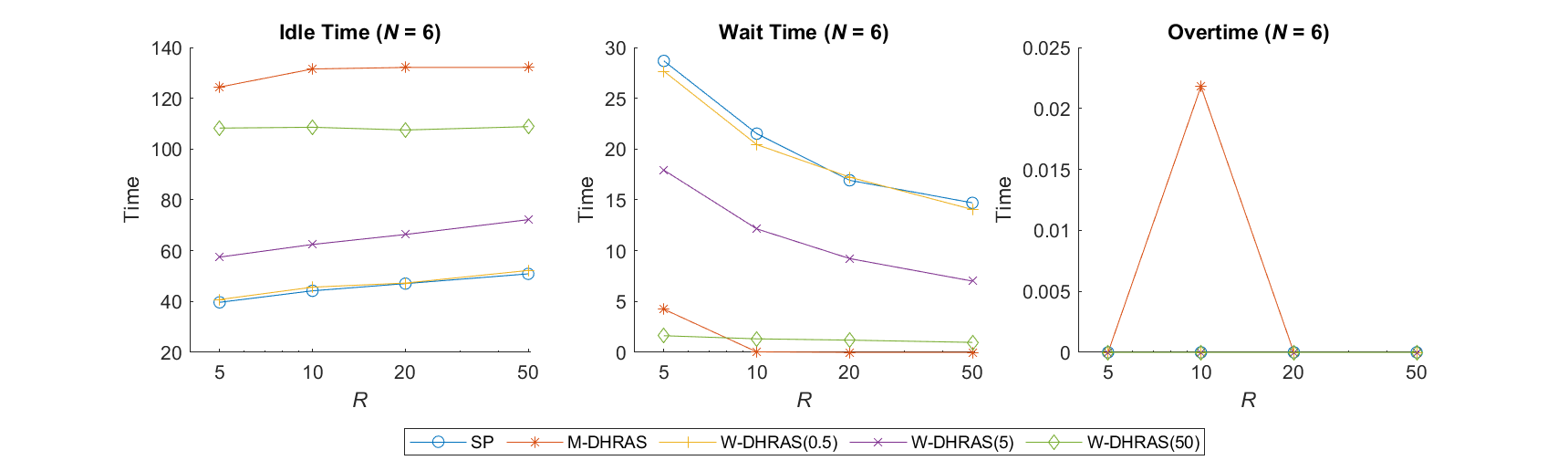}
    \caption{Mean idle time, wait time and overtime with cost structure (a) and $\lambda=2$ under Set 1}
    \label{fig:IWO_1}
\end{figure}

Next, we discuss results for the case when we have misspecified the distributions. Figure \ref{fig:OC_2} shows the performance under Set 2, where a longer travel time is encountered. In contrast to the results in Figure \ref{fig:OC_1}, M-DHRAS and W-DHARS(50) outperforms SP, W-DHRAS(0.5) and W-DHRAS(5). This demonstrates that M-DHRAS and W-DHRAS(50), which are more conservative than the other models, have superior performance in environments where travel time increases by a lot.  This is not suprising since M-DHRAS and W-DHRAS(50) schedule longer time between customers that could hedge against the increase in travel time. The W-DHRAS model outperforms the SP model and the M-DHRAS model when $\epsilon=50 $ (a conservative choice of $\epsilon$).  We can observe from Figure \ref{fig:IWO_2} the trade-off between idle and wait time. Under Set 2, SP model results in a significant amount of waiting time but a small idling time compared with the two DRO models.
\begin{figure}[t]
    \hspace{-10mm}
    \includegraphics[scale=0.60]{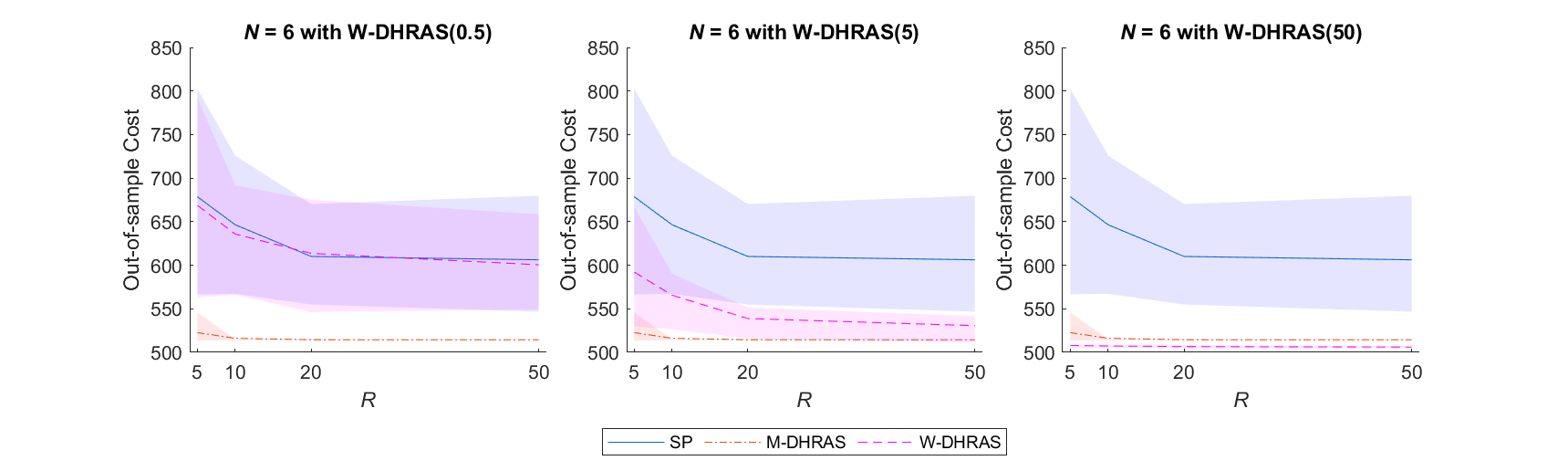}
    \caption{Out-of-sample cost with cost structure (a) and $\lambda=2$ under Set 2}
    \label{fig:OC_2}
\end{figure}
\begin{figure}
    \hspace{-10mm}
    \includegraphics[scale=0.60]{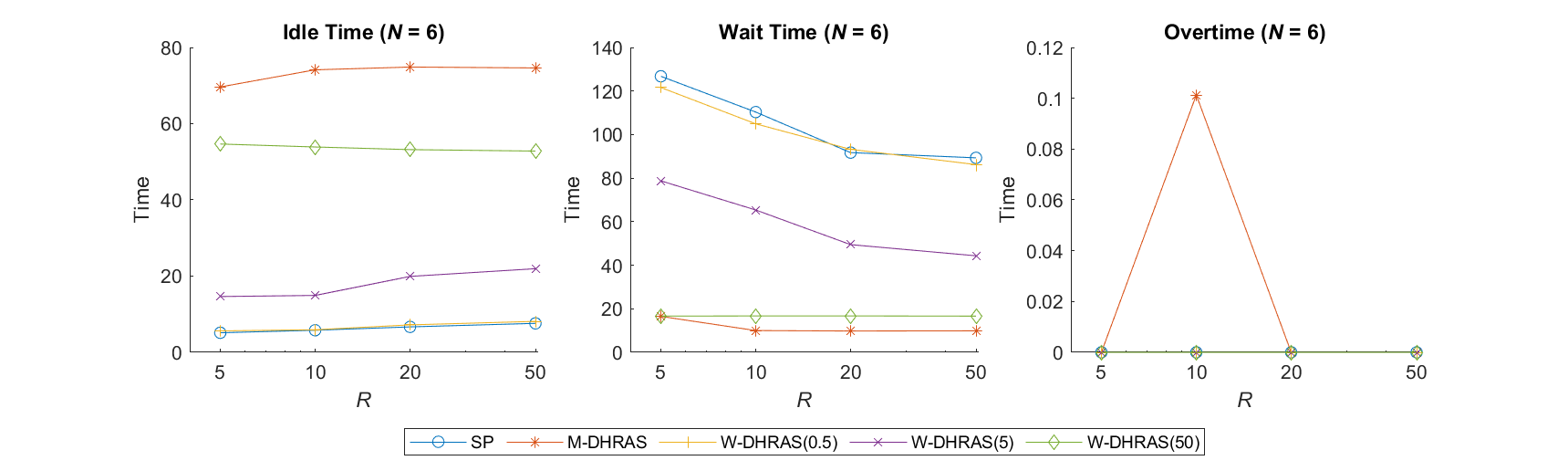}
    \caption{Mean idle time, wait time and overtime with cost structure (a) and $\lambda=2$ under Set 2}
    \label{fig:IWO_2}
\end{figure}

We also demonstrate one of the cases that the supports are perturbed. Figures \ref{fig:OC_3a} to \ref{fig:OC_3c} show the results for Set 3 with different choices of $\delta$. We observe that M-DHRAS performs better (with respect to SP) when $\delta$ increases, which corresponds to a larger deviation. The W-DHRAS(5) model consistently outperforms the SP model. We also observe that the W-DHRAS model has a larger edge over SP when the sample size is small. This is consistent with what we have discussed in previous cases. These results demonstrate that the DRO models perform better than the SP model when there is a large deviation from the sample distribution. 

%
\begin{figure}[t]
    \hspace{-10mm}
    \includegraphics[scale=0.60]{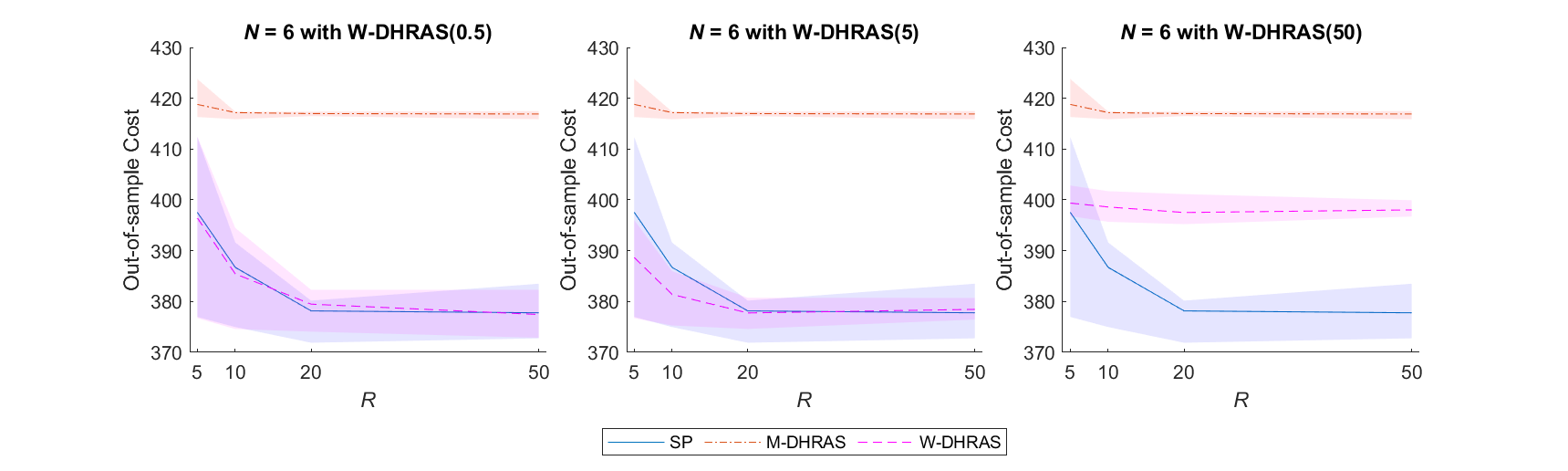}
    \caption{Out-of-sample cost with cost structure (a) and $\lambda=2$ under Set 3 ($\delta=0.10$)}
    \label{fig:OC_3a}
\end{figure}
\begin{figure}[t]
    \hspace{-10mm}
    \includegraphics[scale=0.60]{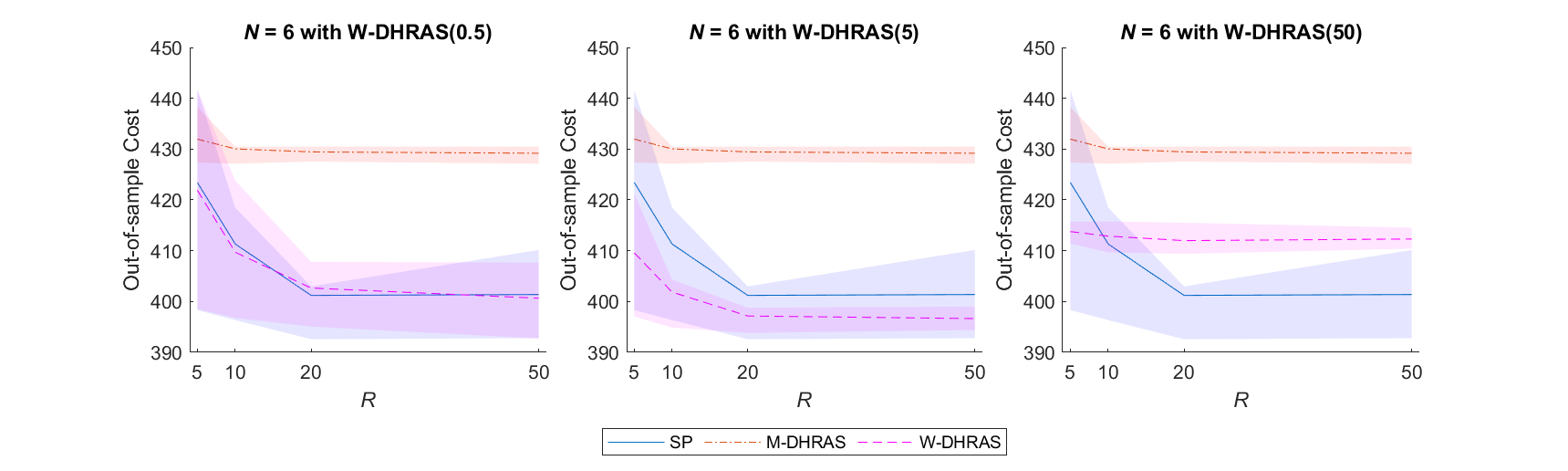}
    \caption{Out-of-sample cost with cost structure (a) and $\lambda=2$ under Set 3 ($\delta=0.25$)}
    \label{fig:OC_3b}
\end{figure}
\begin{figure}[t!]
    \hspace{-10mm}
    \includegraphics[scale=0.60]{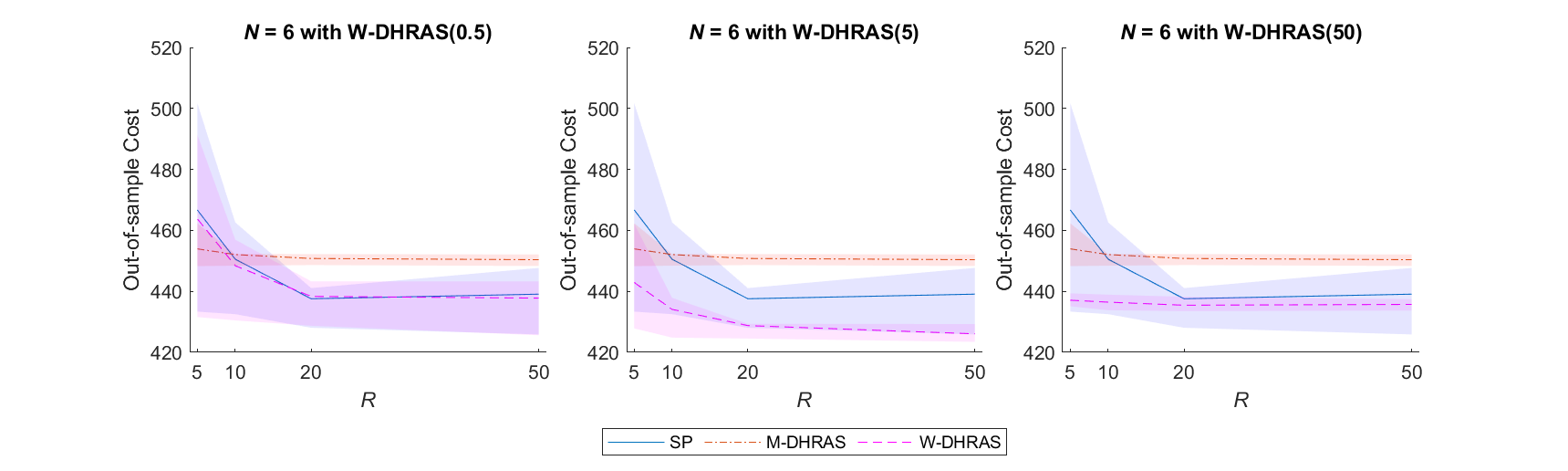}
    \caption{Out-of-sample cost with cost structure (a) and $\lambda=2$ under Set 3 ($\delta=0.50$)}
    \label{fig:OC_3c}
\end{figure}

\subsection{\textbf{Reliability of the models}}\label{subsection:Reliability}
In this section, we analyze the reliability of the three models. Home service operators often need to estimate the budgets on the operational costs ahead of the actual service provision. The optimal value of an HRAS optimization model serves as a relevant estimate of the associated cost if we implement the corresponding optimal decision $(\xb,\ab)$. Intuitively,  when the estimated cost is smaller than the actual cost, the operator may run into financial problems related to a budget deficit and poor planning decisions. A risk-averse operator would then prefer implementing decisions that provide an upper bound on the estimated cost, i.e., seek solutions with higher reliability. Hence, we make use of the reliability measure. For any given decision $(\xb,\ab)$, reliability is the probability that the optimal value from the model is greater than or equal to the actual cost $\E[f(\xb,\ab,\xib)]$. Mathematically, if $v(R)$ is the optimal value and $(\xb(R),\ab(R))$ is the corresponding optimal solution with $R$ samples, $$\text{reliability}=\Prob_{\xib}^R\Big(v(R) \geq \E_{\Prob_{\xib}}[f(\xb(R),\ab(R),\xib)]\Big),$$ where $\Prob_{\xib}^R$ is the product measure of $R$ copies of $\Prob_{\xib}$. The actual cost $\E_{\Prob_{\xib}}[f(\xb(R),\ab(R),\xib)]$ is estimated from $10,000$ out-of-sample scenarios and the reliability is computed from $30$ instances.

Figures \ref{fig:Reliability1} and \ref{fig:Reliability2} show the results for the two cost structures $(\cw_j,\cu_j,\co)=(2,1,20)$ and $(1,5,7.5)$ respectively with $\lambda=2$. We observe that the reliability is, in general, increasing with $R$. This is reasonable since we have a larger amount of historical data. The reliability of SP is the lowest among the three models, which demonstrates that SP with a small number of historical scenarios may not be able to provide a robust cost estimate. On the other hand, the reliability of W-DHRAS(50) consistently gives the best reliability result. This makes sense since $\epsilon=50$ corresponds to a relatively robust model. It is interesting that although M-DHRAS appears to be the most conservative model from the out-of-sample cost perspective, the reliability result is not the best. Overall, this demonstrates that DRO models have a better reliability compared to SP.
\begin{figure}[t]
    \hspace{-10mm}
    \includegraphics[scale=0.60]{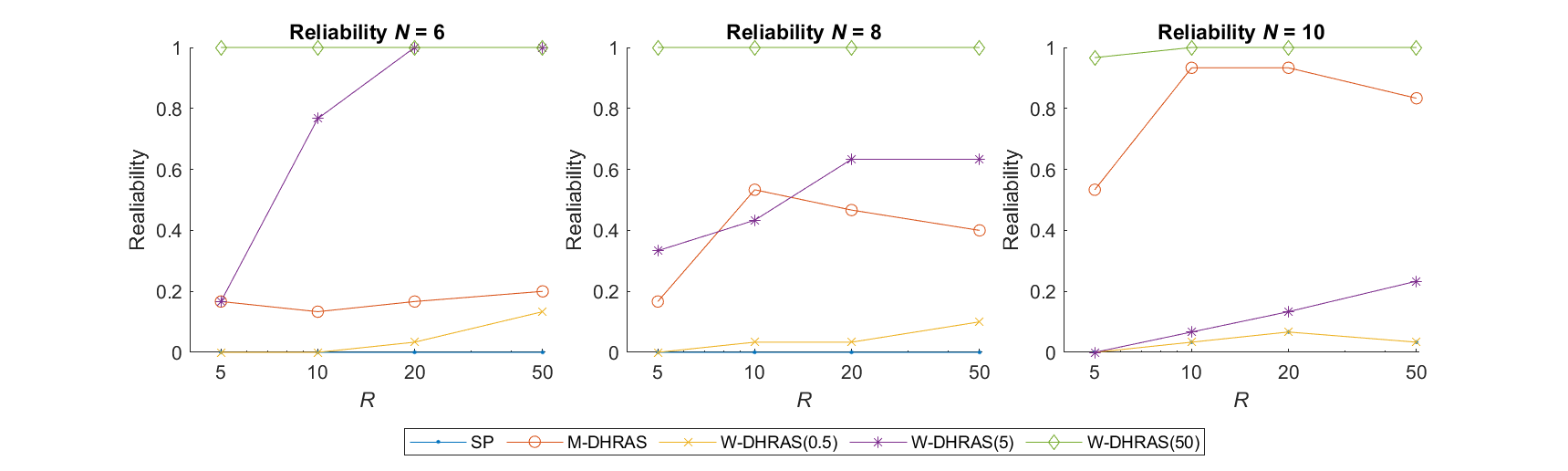}
    \caption{Reliability under $(\cw_j,\cu_j,\co)=(2,1,20)$ and $\lambda=2$}
    \label{fig:Reliability1}
\end{figure}
\begin{figure}[t!]
    \hspace{-10mm}
    \includegraphics[scale=0.60]{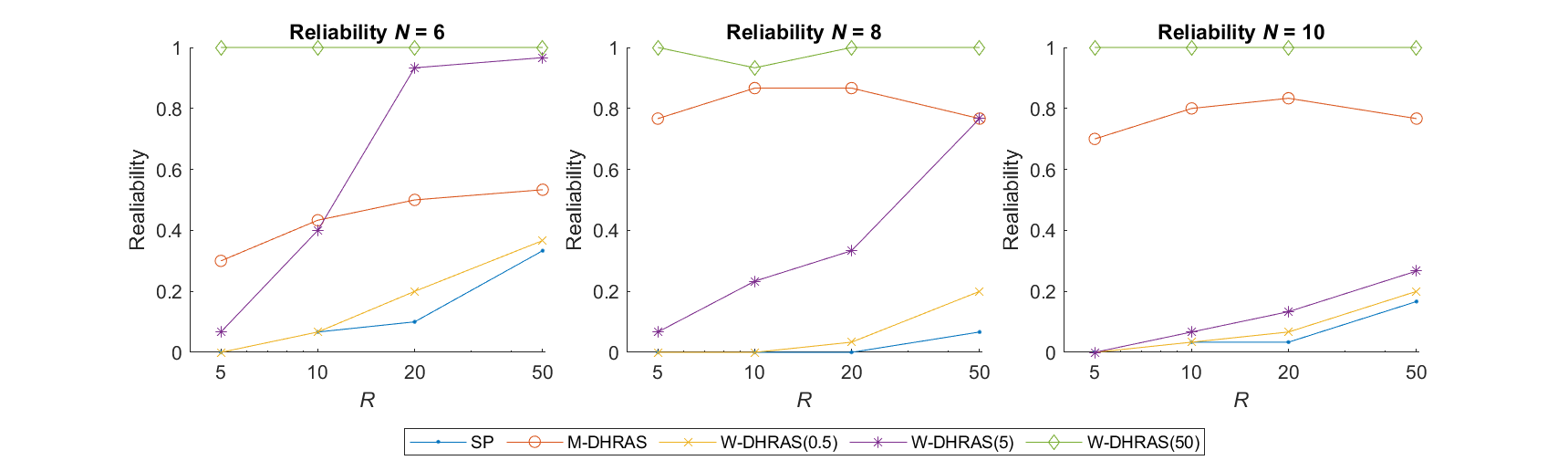}
    \caption{Reliability under $(\cw_j,\cu_j,\co)=(1,5,7.5)$ and $\lambda=2$}
    \label{fig:Reliability2}
\end{figure}

\subsection{\textbf{CPU time}}\label{subsection:CPU_Time}

In this section, we analyze the computational time for solving the constructed practical instances using the three models. Specifically, we examine the effect of $N$ (number of customers) and $R$ (number of scenarios) on the solution time for the three models as follows. Table $\ref{table:CPU_Time}$ shows the mean CPU time for solving $30$ instances using two different cost structures with $\lambda=2$.

\begin{table}[t!]\centering\small
\renewcommand{\arraystretch}{0.4}
\caption{CPU time in seconds for solving the three models are shown and the number in parentheses followed by W-DHRAS model is the choice of $\epsilon$. The reported times are for $\lambda=2$ with two different cost structures. } 
\begin{tabular}{@{}lrrrr|rrrr@{}} \toprule
$N = 6$ & \multicolumn{4}{c}{$(\cw_j,\cu_j,\co)=(2,1,20)$} & \multicolumn{4}{c}{$(\cw_j,\cu_j,\co)=(1,5,7.5)$}  \\
\cmidrule{2-5}  \cmidrule{6-9}
CPU Time (in s) & $R=5$ & $R=10$ & $R=20$ & $R=50$ & $R=5$ & $R=10$ & $R=20$ & $R=50$\\
\cmidrule{1-5} \cmidrule{6-9}
SP           & 0.25 & 0.30 & 0.37  & 0.42   & 0.23 & 0.21 & 0.33  & 0.44   \\
M-DHRAS      & 0.46 & 0.48 & 0.47  & 0.48   & 0.46 & 0.46 & 0.46  & 0.46   \\
W-DHRAS(0.5) & 2.21 & 5.24 & 18.57 & 102.52 & 1.95 & 5.04 & 18.94 & 108.99 \\
W-DHRAS(5)   & 2.33 & 6.02 & 20.93 & 125.19 & 1.99 & 5.22 & 20.26 & 121.32 \\
W-DHRAS(50)  & 2.34 & 5.34 & 22.93 & 134.77 & 2.48 & 7.00 & 25.11 & 170.84 \\
\toprule
$N = 8$ & \multicolumn{4}{c}{$(\cw_j,\cu_j,\co)=(2,1,20)$} & \multicolumn{4}{c}{$(\cw_j,\cu_j,\co)=(1,5,7.5)$}  \\
\cmidrule{2-5}  \cmidrule{6-9}
CPU Time (in s) & $R=5$ & $R=10$ & $R=20$ & $R=50$ & $R=5$ & $R=10$ & $R=20$ & $R=50$\\
\cmidrule{1-5} \cmidrule{6-9}
SP           & 0.28  & 0.33  & 0.39   & 0.76    & 0.28 & 0.32  & 0.39   & 0.79   \\
M-DHRAS      & 1.11  & 1.05  & 1.07   & 1.10    & 0.90 & 0.92  & 0.91   & 0.93   \\
W-DHRAS(0.5) & 6.14  & 16.70 & 90.07  & 583.90  & 5.29 & 15.29 & 81.90  & 514.50 \\
W-DHRAS(5)   & 7.42  & 20.42 & 101.00 & 681.52  & 5.59 & 18.05 & 85.59  & 530.59 \\
W-DHRAS(50)  & 10.05 & 27.10 & 145.41 & 1295.76 & 8.98 & 24.90 & 114.88 & 718.17 \\
\toprule
$N = 10$ & \multicolumn{4}{c}{$(\cw_j,\cu_j,\co)=(2,1,20)$} & \multicolumn{4}{c}{$(\cw_j,\cu_j,\co)=(1,5,7.5)$}  \\
\cmidrule{2-5}  \cmidrule{6-9}
CPU Time (in s) & $R=5$ & $R=10$ & $R=20$ & $R=50$ & $R=5$ & $R=10$ & $R=20$ & $R=50$\\
\cmidrule{1-5} \cmidrule{6-9}
SP           & 0.376 & 0.43  & 0.54   & 1.04    & 0.35  & 0.42  & 0.54   & 1.07    \\
M-DHRAS      & 2.05  & 2.02  & 2.11   & 1.97    & 1.79  & 1.83  & 1.86   & 1.89    \\
W-DHRAS(0.5) & 17.84 & 56.71 & 249.84 & 1801.04 & 15.07 & 51.71 & 220.39 & 1415.10 \\
W-DHRAS(5)   & 18.22 & 61.54 & 260.82 & 1922.29 & 15.54 & 53.34 & 228.30 & 1523.63 \\
W-DHRAS(50)  & 19.98 & 70.72 & 297.87 & 2490.39 & 16.41 & 59.70 & 270.15 & 1919.69 \\
\bottomrule
\end{tabular}
\label{table:CPU_Time}
\end{table}

We first observe that the SP model takes the shortest time to solve for all instances and most of them can be solved within one second. Moreover, SP requires a longer solution time with a larger number of scenarios. The M-DHRAS model has a slightly longer solution time than the SP model but, since it only depends on the mean and range of the sample, it has a consistent performance under all values of $R$. Solution times of the W-DHRAS model are longer than those of the M-DHRAS and SP models. Solution times of the W-DHRAS model increase with $R$ and varies across three choices of $\epsilon$. We attribute the difference in solution times between the SP and W-DHRAS models to their respective sizes (i.e., the number of variables and constraints).  For fixed $R$, the W-DHRAS model has more variables and constraints than the SP model. As pointed out by previous papers (e.g. \citealp{artigues2015mixed,klotz2013practical,shehadeh2020distributionally}), an increase in model size often suggests an increase in solution time for the LP relaxation and, thus, the MILP model’s overall solution time. 

Recall that W-DHRAS is theoretically more useful when there is only a small data on random parameters. When there is a large data set, W-DHRAS may converge to SP with $\epsilon$ close to zero and our SP is efficient with large $R$ (see \ref{apdx:SolTime}). Thus, although W-DHRAS takes the longest time among the three, it can solve realistic instances within a reasonable time, especially when $R$ is small. Specifically, when $N=6$ (common in home healthcare applications), W-DHRAS solution times range from 2 seconds ($R=5$) to 3 minutes ($R=50$). When $N=10$ (the maximum number of customers in HRAS applications; see discussion and references in Section~\ref{subsection:Expt_Setup}), W-DHRAS solution times range from 15 seconds ($R=5$) to around 30 minutes ($R=50$ and $\epsilon=0.5, 5$).

In \ref{apdx:SolTime}, we demonstrate the SP and M-DHRAS abilities to solve (unrealistic) large instances with a large number of customers and scenarios. For example, the solutions times of the SP with $N=15$ customers and $R=500$ scenarios are less than 1.1 minutes. Solution times of the M-DHRAS for the same instance are negligible.


\section{Conclusion} \label{sec5:conclusion}


In this paper, we address the uncertainty and distributional ambiguity of service and travel times in an HRAS problem. We propose and analyze the first SP model and two DRO models for HRAS with random service and travel times. We use two popular ambiguity sets in the DRO models: the mean-support ambiguity set (in the M-DHRAS model) and the $1$-Wasserstein ambiguity set (in the W-DHRAS model).  We derive equivalent mixed-integer linear programming (MILP) reformulations of both DRO models that can be implemented and efficiently solved using off-the-shelf optimization software.  We conduct extensive experiments comparing the proposed models. Our results demonstrate that: (1) W-DHRAS yields robust decisions with both small and large data size, and enjoys both asymptotic consistency and finite-data guarantees; (2) M-DHRAS produces the most conservative schedules and it performs well when the distribution of travel time changes dramatically (e.g., actual travel times are longer); (3) W-DHRAS solutions have better operational performance than the SP solutions both under perfect information and misspecified distributions, even when only a small data set is available; (4) DRO models produce more reliable solutions than the SP model; (5) the proposed SP and DRO models are computationally efficient under realistic HRAS settings. Thus, we conclude that the proposed DRO models are useful if the distributions of the random service and travel time are unknown and only a limited amount of data is available. The resulting optimal solutions are robust to misspecifications of the underlying distribution with high reliability. This addresses the primary goal of our paper to investigate the value of the DRO approach for HRAS and compare it with the SP approach.

Under the circumstance that only limited data is available, decision making process under uncertainty (in both service and travel times) is typically challenging. This is particularly true in the home health care context where the service time data is often not available. Except in the case that the underlying distribution coincides with the sample data distribution, the standard SP approach appears to be vulnerable (e.g., poor performance and reliability). The use of DRO models could remedy the distribution misspecification problem and provide a more robust solution, leading to a reduction in out-of-sample costs and improvement in reliability. This is typically true when we experience changes in the underlying distribution. While M-DHRAS yields the most conservative solution (e.g., large inter-arrival times), the model yields superior performance when there is a large perturbation in distribution (e.g., drastic increase in travel time). For instance, when the operator is planning for a day with possibly severe traffic congestion or weather condition (and hence, travel time is highly random), adopting M-DHRAS solutions will yield better operational performance. On the other hand, under normal operations, adopting W-DHRAS solutions may yield better operational performance  (e.g., with an empirical choice of $\epsilon=5$) and reduce out-of-sample costs due to distribution misspecification with a better reliability than SP.

 Our model can serve as a building block for the following future extensions and areas of research in terms of home service aspects, constraints, and various sources of uncertainties. First, we want to generalize our distributionally robust approach to the case when we have multiple service providers and include decisions such as (1) determining the number of service teams to hire,  (2) assigning the hired service teams to customers, (3) constructing routes for the service teams, and (4) determining the customers' appointment times. Second, we aim to model other sources of uncertainty, such as last-minute customer cancelations and operator's cancelation on the day of service. Third, we also aim to incorporate customers' preferences on appointment times. 

\bibliographystyle{elsarticle-harv}
\linespread{1}

\bibliography{DHARS}

\appendix
\newpage

\begin{center}
    Stochastic Optimization Models for a Home Service Routing and Appointment Scheduling Problem with Random Travel and Service Times (Appendices)

\end{center}

\section{Comparison with \cite{jiang2019data} and \cite{zhan2020home}}\label{Appx:LR_Compare}
Table \ref{table:literature_comparison} summarizes the differences between \cite{jiang2019data}, \cite{zhan2020home} and our work in multiple aspects, which are two recent single-server papers relevant to our work.

\begin{table}[h!]  
\small
\center
\renewcommand{\arraystretch}{0.9}
\caption{Comparison with \cite{jiang2019data} and \cite{zhan2020home}} 
\begin{tabular}{l|ccccccc}
\toprule 
\textbf{Paper} & \multicolumn{2}{c}{\textbf{Model parameters}} & \multicolumn{2}{c}{\textbf{Opt approach}} & \multicolumn{2}{c}{\textbf{Decisions}} \\
               & Service Time & Travel Time & SP & DRO & Apt scheduling & Routing \\ \midrule
\cite{jiang2019data} & Stochastic & Not considered & \checkmark  & \checkmark (W) & \checkmark &  \\
 \cite{zhan2020home} & Stochastic & Deterministic  & \checkmark  & & \checkmark & \checkmark  \\
 \textbf{Our paper}           & Stochastic & Stochastic     & \checkmark  & \checkmark (M,W) & \checkmark & \checkmark \\
\bottomrule 
\end{tabular}\label{table:literature_comparison}
 \begin{flushleft} \textit{Note:} Opt is optimization, SP is stochastic programming, DRO is distributionally robust optimization, Apt is appointment, W is Wasserstein ambiguity, M is mean-support ambiguity \end{flushleft}
\end{table} 

\section{Proof of Proposition~\ref{Prop1M:DualMinMax}}\label{Appx:ProofOfProb1M}
\begin{proof} 
For a fixed $(\xb,\ab)\in\calX\times\calA$, we can formulate problem \eqref{MDHRAS:InnerMax} as the following linear functional optimization problem. 
\begin{subequations}
\begin{align}
& \max_{\Prob \geq 0} \ \int_{\calS} f(\xb, \ab, \xib )  \ d \Prob  \\
& \ \ \text{s.t.} \   \int_{\calS}  d_i \ d\Prob= \mu_i^{\mbox{\tiny d}}, \quad \quad \forall i \in [N], \label{ConInner:d}\\
& \ \ \ \ \  \ \  \int_{\calS}  t_{i,i'} \ d\Prob= \mu_{i,i'}^{\mbox{\tiny t}}, \quad \quad \forall i \in [0,N], \ i' \in [0,N], \label{ConInner:t}\\
& \ \ \ \ \  \ \ \int_{\calS}   d\Prob= 1. \label{ConInner:Distribution}
\end{align} \label{InnerMax}%
\end{subequations}
Letting  $\rho_i$, $\alpha_{i,i'}$, and $\theta$ be the dual variables associated with constraints \eqref{ConInner:d}, \eqref{ConInner:t} and \eqref{ConInner:Distribution} respectively, we present the dual of problem (\ref{InnerMax}):
\begin{subequations}
\begin{align}
& \min_{\rhob,\,\alphab,\,\theta} \ \sum \limits_{i=1}^N \mu_i^{\mbox{\tiny d}} \rho_i+ \sum_{i=0}^N \sum_{i'=0}^N \mu_{i,i'}^{\mbox{\tiny t}}  \alpha_{i,i'}+\theta \label{DualInner:Obj} \\
& \ \  \text{s.t.} \  \sum \limits_{i=1}^N d_i \rho_i+ \sum_{i=0}^N \sum_{i'=0}^N t_{i,i'} \alpha_{i,i'}+ \theta \geq f(\xb, \ab, \xib),\quad\forall \xib\in \calS , \label{DualInner:PrimalVariabl}
\end{align} \label{DualInnerMax}%
\end{subequations}
where $\rhob\in\R^N$, $\alphab\in\R^{(N+1)\times(N+1)}$ and $\theta\in\R$ are unrestricted in sign, and constraint \eqref{DualInner:PrimalVariabl} is associated with the primal variable $\Prob$. Under the standard assumptions that  (1) $\mu_i^{\mbox{\tiny d}}$ lies in the interior of the set $\lbrace \int_{\calS}  d_i \ d\Q: \Q$ is a probability distribution over $\calS \rbrace$, and (2)  $ \mu_{i,i'}^{\mbox{\tiny t}}$ lies in the interior of the set $\lbrace \int_{\calS}  t_{i,i'} \ d\Q: \Q$ is a probability distribution over $\calS \rbrace$,  strong duality holds such that (\ref{InnerMax}) and (\ref{DualInnerMax}) equal in optimal objective value (\citealp{bertsimas2005optimal, shehadeh2020distributionally}). Note that for fixed $(\rhob, \alphab, \theta)$, constraint \eqref{DualInner:PrimalVariabl} is equivalent to 
$$\theta \geq \max \limits_{\xib \in \calS } \left\{ f(x,a, \db, \tb)- \sum \limits_{i=1}^N d_i \rho_i- \sum_{i=0}^N \sum_{i'=0}^N t_{i,i'} \alpha_{i,i'}\right\}.$$
Since we are minimizing $\theta$ in \eqref{DualInnerMax}, the dual formulation of \eqref{InnerMax} is equivalent to:
\begin{align*} 
& \min_{\alphab,\,\rhob}  \ \Bigg \{ \sum \limits_{i=1}^N \mu_i^{\mbox{\tiny d}} \rho_i+ \sum_{i=1}^N \sum_{i'=1}^N \mu_{i,i'}^{\mbox{\tiny t}}  \alpha_{i,i'} +\max \limits_{\xib \in \calS }  \Big \{  f(x,a, \db, \tb)- \sum \limits_{i=1}^N d_i \rho_i- \sum_{i=0}^N \sum_{i'=0}^N t_{i,i'} \alpha_{i,i'} \} \Big \}  \Bigg\} \\
& \ \ \text{s.t.} \ \ \alphab\in\R^{(N+1)\times(N+1)},\, \rhob\in \R^N.
\end{align*}
This completes the proof.
\end{proof}

\section{Proof of Proposition~\ref{Prop2M}}\label{Appx:ProofProp2M}
\begin{proof} 
In view of the objective function \eqref{Dual2nd:Obj}, we consider all the terms involving $y$ and define the function $H$ as
$$H(\yb) =   -a_1 y_1 + \sum_{j=2}^{N+1} \big ( a_{j-1}-a_j) y_j+ \sum_{j=2}^{N+1} \sum_{i=1}^{N}  d_ix_{i,j-1} y_j  + \sum_{i=1}^N t_{0,i}x_{i,1}y_1 + \sum_{j=2}^N \sum_{i=1}^N \sum_{i\neq i'} t_{i,i'}x_{i,j-1}x_{i',j}y_j. $$
We are maximizing $H(\yb)$ over a polyhedral set $\yb\in\calY$. Note that $H$ is a linear (convex) function in $\calY$ and we are maximizing $H$ over a convex compact set $\calY$. From basic convex analysis, we know that there exists an optimal solution at some extreme point of $\calY$. Recall the definition of $\calY$. 
$$\calY= \Big\{\yb \mid 0\leq y_{N+1} \leq \co, \ -\cu_j\leq y_j \leq \cw_j+ y_{j+1}, \ \forall j \in [N]\Big\}$$
We can apply a similar technique in Proposition 2 in \cite{jiang2019data} and Proposition 3 in \cite{shehadeh2020distributionally} to derive an equivalent formulation for the maximization problem. To characterize the extreme points of $\calY$, we introduce dummy variables $y_{N+2}$ with $\cw_{N+1}=\co$, $\cu_{N+1}=0$, $\cw_{N+2}=0$ and $\cu_{N+2}=0$. Then, we rewrite the set $\calY$ as 
\begin{equation*}
\calY=\{\yb\mid y_{N+2}=-\cu_{N+2},\,-\cu_j\leq y_j\leq \cw_j+y_{j+1},\,j\in[N+1]\}.
\end{equation*}
Note that an extreme point of $\calY$ satisfies (i) $y_{N+2}=-\cu_{N+2}=0$ and (ii) for $j\in[N+1]$, the variable constraint on $y_j$ is binding either at the lower bound or the upper bound. If $y_j$ is binding at the upper bound, it does depend on $y_{j+1}$ but if it is binding at the lower bound, it just takes the value of $-c^u_j$. From this, we can construct a one-to-one correspondence between the extreme points in $\calY$ and partitions of the set $\{1,\dots,N+2\}$. That is, given an interval $[k,v]_\Z$ in $[N+2]$, $y_v$ is binding at the lower bound, i.e. $y_v=-c_v^u$, and for $j\in[k,v-1]$, $y_j$ is binding at the upper bound, i.e. $y_j=-\cu_j+\sum_{l=j}^{v-1}\cw_l$. Therefore, for notational simplicity, we define $\pi_{j,v}=-\cu_v+\sum_{l=j}^{v-1}\cw_l$ for $1\leq j\leq v\leq N+2$. We can reformulate our optimization problem over $\calY$ to optimizing over partitions of $\{1,\dots,N+2\}$.

To do so, let $b_{k,v}$ be the binary variable with $1$ indicating that the interval $[k,v]$ belongs to an element of a partition for $1\leq k\leq v\leq N+2$. The condition $$\sum_{k=1}^j \sum_{v=j}^{N+2} b_{k,v}=1,\,\forall j\in[N+2]$$ is equivalent to saying that $\{[k,v]_\Z\mid b_{k,v}=1,\,k\in[N+2],\,v\in[k,N+2]_\Z\}$ is a partition of $[N+2]$. With the use of the equality for any extreme point $y$, we have $$y_j=\sum_{k=1}^j \sum_{v=j}^{N+2} y_jb_{k,v}=\sum_{k=1}^j \sum_{v=j}^{N+2} \pi_{j,v}b_{k,v}\,,$$ and the problem maximizing $H(\yb)$ over $\yb\in\calY$ is equivalent to the following integer program, 
\begin{subequations}\label{IPFormulation}
\begin{align}
\max_{\bb} &   \sum_{v=1}^{N+2} -a_1 \pi_{1,v} b_{1,v}+\sum_{j=2}^{N+1} \sum_{k=1}^j \sum_{v=j}^{N+2} ( a_{j-1}-a_j) \pi_{j,v} b_{k,v} \nonumber +\sum_{j=2}^{N+1} \sum_{k=1}^j \sum_{v=j}^{N+2}  \sum_{i=1}^{N}  d_ix_{i, j-1}\pi_{j,v} b_{k,v}\nonumber \\
& \ \ +\sum_{v=1}^{N+2}\sum_{i=1}^N  t_{0,i} x_{i,1}\pi_{1,v} b_{1,v} +\sum_{j=2}^{N+1} \sum_{k=1}^j \sum_{v=j}^{N+2} \sum_{i=1}^N \sum_{i'\neq i} t_{i,i'}x_{i,j-1} x_{i', j}\pi_{j,v} b_{k,v} \label{IPObj} \\
\text{s.t.} &  \ \sum_{k=1}^j \sum_{v=j}^{N+2} b_{k,v}=1, \qquad \forall j \in [N+2], \label{IPConstraint}\\ 
& \  b_{k,v} \in \{0, 1\}, \quad \forall k \in [N+2], \forall v \in [k, N+2]_\Z. \label{IPConst}
\end{align}
\end{subequations}
We remark that the last summation in \eqref{IPObj} is summed from $j=2$ up to $N+1$ instead of $N$ since $x_{i,N+1}=0$ for all $i$. This makes the last summation term can behave similarly to the previous terms, which simplifies our discussion.

Now, we consider the problem 
$$\max \limits_{\xib \in \calS  }  \Bigg \{  f(\xb,\ab, \xib)- \sum \limits_{i=1}^N d_i \rho_i- \sum_{j=2}^N \sum_{i=1}^N \sum_{i'\ne1} t_{i,i'} \alpha_{i,i'} x_{i,j-1} x_{i',j}-\sum_{i=1}^N t_{0,i}\alpha_{0,i}x_{i,1} - \sum_{i=1}^N t_{i,0}\alpha_{i,0}x_{i,N} \Bigg \}.$$
With the use of \eqref{IPFormulation}, we can reformulate it as
\begin{subequations}\label{IPFull}
\begin{align}
\max_{\bb,\,\xib} &  \sum_{v=1}^{N+2} -a_1 \pi_{1,v} b_{1,v}+\sum_{j=2}^{N+1} \sum_{k=1}^j \sum_{v=j}^{N+2} ( a_{j-1}-a_j) \pi_{j,v} b_{k,v} \nonumber +\sum_{j=2}^{N+1} \sum_{k=1}^j \sum_{v=j}^{N+2}  \sum_{i=1}^{N}  d_ix_{i, j-1}\pi_{j,v} b_{k,v} \nonumber \\
& \ \ +\sum_{v=1}^{N+2}\sum_{i=1}^N  t_{0,i}x_{i,1}\pi_{1,v}  b_{1,v} +\sum_{j=2}^{N+1} \sum_{k=1}^j \sum_{v=j}^{N+2} \sum_{i=1}^N \sum_{i'\neq i} t_{i,i'}x_{i,j-1} x_{i', j}\pi_{j,v} b_{k,v} + \lambda\sum_{i=1}^N t_{0,i}x_{i,1} \nonumber \\
& \ \ +\lambda\sum_{i=1}^Nt_{i,0}x_{i,N}+\lambda\sum_{j=2}^{N+1}\sum_{i=1}^N\sum_{i'\ne i} t_{i,i'}x_{i,j-1}x_{i',j} - \sum_{i=1}^N \rho_i d_i - \sum_{j=2}^N \sum_{i=1}^N \sum_{i'\ne i} t_{i,i'} \alpha_{i,i'} x_{i,j-1} x_{i',j} \nonumber \\
& \ \ - \sum_{i=1}^N t_{0,i}\alpha_{0,i}x_{i,1} - \sum_{i=1}^N t_{i,0}\alpha_{i,0}x_{i,N} \label{IPObjFull} \\
\text{s.t.} &  \ \sum_{k=1}^j \sum_{v=j}^{N+2} b_{k,v}=1, \qquad \forall j \in [N+2], \label{IPConstraintFull}\\ 
& \  b_{k,v} \in \{0, 1\}, \quad \forall k \in [N+2], \forall v \in [k, N+2]_\Z, \label{IPConstFull} \\
& \ \xib \in \calS.
\end{align}
\end{subequations}
By the construction of $\calS$, we can take maximum over each $d_i$ and $t_{i,i'}$. To achieve this, we first gather all the terms involving $d_i$ and $t_{i,i'}$ separately and take supremum accordingly. We can reformulate the objective function \eqref{IPObjFull} as
\begin{align}\label{IPFull2}
&  \sum_{v=1}^{N+2} -a_1 \pi_{1,v} b_{1,v}+\sum_{j=2}^{N+1} \sum_{k=1}^j \sum_{v=j}^{N+2} ( a_{j-1}-a_j) \pi_{j,v} b_{k,v} \nonumber +\sum_{i=1}^{N} \left[\sum_{j=2}^{N+1} \sum_{k=1}^j \sum_{v=j}^{N+2} \left(\pi_{j,v}-\rho_i\right)\right]x_{i, j-1}b_{k,v}d_i  \nonumber \\
& \ \ +\sum_{i=1}^N \sum_{v=1}^{N+2}  [(\pi_{1,v}+\lambda)-\alpha_{0,i}]x_{i,1} b_{1,v}t_{0,i}  + \sum_{i=1}^N (\lambda-\alpha_{i,0})x_{i,N}t_{i,0}  \nonumber \\
& \ \ +\sum_{i=1}^N \sum_{i'\neq i} \left\{\sum_{j=2}^{N+1} \sum_{k=1}^j \sum_{v=j}^{N+2}  \left[(\pi_{j,v}+\lambda)-\alpha_{i,i'}\right]x_{i,j-1} x_{i', j}b_{k,v} \right\} t_{i,i'} ,
\end{align}
where we focus on the third to the last term. We first define $\Delta d_i=\overline{d}_i-\underline{d}_i$ and $\Delta t_{i,i'}=\overline{t}_{i,i'}-\underline{t}_{i,i'}$. Note that 
\begin{align} \label{IP:5term}
\sup_{t_{i,0}\in[\underline{t}_{i,0},\overline{t}_{i,0}],\, i\in[N]} \sum_{i=1}^N (\lambda-\alpha_{i,0})x_{i,N}t_{i,0}=\sum_{i=1}^N\left[(\lambda-\alpha_{i,0})\underline{t}_{i,0}+\Delta t_{i,0}(\lambda-\alpha_{i,0})^+\right]x_{i,N}.
\end{align}
For the forth term, for any given $\xb\in\calX$ and feasible $b$, since $x_{i,1}=1$ and $b_{1,v}=1$ for exactly one $i$ and $v$, say $i_1$ and $v_1$, we have
\begin{align}
&\quad\  \sup_{t_{0,i}\in[\underline{t}_{0,i},\overline{t}_{0,i}],\,i\in[N]} \sum_{i=1}^N \sum_{v=1}^{N+2}  x_{i,1}(\pi_{1,v}+\lambda-\alpha_{0,i}) b_{1,v}t_{0,i}  \nonumber \\
&= \sup_{t_{0,i_1}\in[\underline{t}_{0,i_1},\overline{t}_{0,i_1}]} (\pi_{1,v_1}+\lambda-\alpha_{0,i_1}) t_{0,i_1} \nonumber\\
&= (\pi_{1,v_1}+\lambda-\alpha_{0,i_1})\underline{t}_{0,i_1} + \Delta t_{0,i}(\pi_{1,v_1}+\lambda-\alpha_{0,i_1})^+ \nonumber\\
&= \sum_{i=1}^N \sum_{v=1}^{N+2} \left[(\pi_{1,v}+\lambda-\alpha_{0,i})\underline{t}_{0,i} + \Delta t_{0,i}(\pi_{1,v}+\lambda-\alpha_{0,i})^+\right] x_{i,1}b_{1,v}. \label{IP:4term}
\end{align}
For the third term, notice that for any given $\xb\in\calX$, $i$ and $j$ have a one-to-one correspondence. That is, for any $i$, we can identify exactly one $j=j_i$ such that $x_{i,j-1}=1$. Then,
$$\left[\sum_{j=2}^{N+1} \sum_{k=1}^j \sum_{v=j}^{N+2} \left(\pi_{j,v}-\rho_i\right)x_{i, j-1}b_{k,v} \right]d_i = \sum_{k=1}^{j_i}\sum_{v=j_i}^{N+2}\left(\pi_{j_i,v}-\rho_i\right)b_{k,v} d_i=(\pi_{j_i,v_i}-\rho_i)d_i.$$
since $b_{k,v}=1$ only for one pair of $(k,v)$, say $(k_i,v_i)$. Hence, we have 
 \allowdisplaybreaks
\begin{align}
&\quad\, \sup_{d_i\in[\underline{d}_i,\overline{d}_i],\,i\in[N]} \sum_{i=1}^N \left[\sum_{j=2}^{N+1} \sum_{k=1}^j \sum_{v=j}^{N+2} \left(\pi_{j,v}-\rho_i\right)x_{i, j-1}b_{k,v} \right]d_i  \nonumber\\
&=\sum_{i=1}^N \sup_{d_i\in[\underline{d}_i,\overline{d}_i],\,i\in[N]} \left[\sum_{j=2}^{N+1} \sum_{k=1}^j \sum_{v=j}^{N+2} \left(\pi_{j,v}-\rho_i\right)x_{i, j-1}b_{k,v} \right]d_i  \nonumber\\
&=\sum_{i=1}^N \sup_{d_i\in[\underline{d}_i,\overline{d}_i],\,i\in[N]} (\pi_{j_i,v_i}-\rho_i) d_i   \nonumber\\
&= \sum_{i=1}^N \left[(\pi_{j_i,v_i}-\rho_i)\underline{d}_i + \Delta d_i (\pi_{j_i,v_i}-\rho_i)^+\right] \nonumber\\
&=\sum_{i=1}^N \sum_{j=2}^{N+1} \sum_{k=1}^j \sum_{v=j}^{N+2}\left[(\pi_{j,v}-\rho_i)\underline{d}_i + \Delta d_i (\pi_{j,v}-\rho_i)^+\right]x_{i,j-1}b_{k,v}. \label{IP:3term}
\end{align}
The last term involves the quadratic term $x_{i,j-1}x_{i',j}$. For any given pair $(i,i')$, there exists at most one $j$, say $j_i$, such that $x_{i,j-1}x_{i',j}=1$. If there does not exist such an $j_i$, the triple summation term is just zero. Otherwise, there exists such an $j_i$ and the last term reads 
$$\left\{\sum_{j=2}^{N+1} \sum_{k=1}^j \sum_{v=j}^{N+2}  \left[(\pi_{j,v}+\lambda)-\alpha_{i,i'}\right]x_{i,j-1} x_{i', j}b_{k,v} \right\} t_{i,i'}=(\pi_{j_i,v_i}+\lambda)-\alpha_{i,i'}.$$
With the use of these two observations, we have
 \allowdisplaybreaks
\begin{align}
&\quad\, \sup_{t_{i,i'}\in[\underline{t}_{i,i'},\overline{t}_{i,i'}],\, i\in[N], i'\ne i} \sum_{i=1}^N \sum_{i'\neq i} \left\{\sum_{j=2}^{N+1} \sum_{k=1}^j \sum_{v=j}^{N+2}  \left[(\pi_{j,v}+\lambda)-\alpha_{i,i'}\right]x_{i,j-1} x_{i', j}b_{k,v} \right\} t_{i,i'} \nonumber\\
&=  \sum_{i=1}^N \sum_{i'\neq i} \sum_{j=2}^{N+1} \sum_{k=1}^j \sum_{v=j}^{N+2}  \left[(\pi_{j,v}+\lambda-\alpha_{i,i'})\underline{t}_{i,i'} + \Delta t_{i,i'} (\pi_{j,v}+\lambda-\alpha_{i,i'})^+\right]x_{i,j-1}x_{i',j}b_{k,v} \label{IP:6term}
\end{align}
Eventually, we have reformulated the objective function using \eqref{IP:5term}, \eqref{IP:4term}, \eqref{IP:3term} and \eqref{IP:6term}. 

Note that from \eqref{IPConstraintFull}, the coefficient matrix is totally unimodular and hence, we can relax the integer constraint \eqref{IPConstFull} to $b_{k,v}\geq0$. This reformulates \eqref{IPFull} into an LP in $\bb$. One can observe that we put $b_{k,v}$ to the outermost position in all the terms so that we are able to collect the terms with the same $b_{k,v}$ by interchanging the summations. Indeed, for some arbitrary terms $z_{jkv}$, the triple summation can be written as 
$$\sum_{j=2}^{N+1} \sum_{k=1}^j \sum_{v=j}^{N+2}z_{jkv}=\sum_{v=2}^{N+2}\sum_{j=2}^{\min(v,N+1)} z_{j1v} + \sum_{k=2}^{N+1}\sum_{v=k}^{N+2}\sum_{j=k}^{\min(v,N+1)}  z_{jkv}.$$ 
Then, we are able to take the dual of the resulting LP. Let $\beta_j$ be the dual variable associated to the constraint \eqref{IPConstraintFull} for $j=1,\dots,N+2$. Thus, we arrive at the following program.
\begin{subequations}
\begin{align}
 \min_{\betab}  &\  \sum \limits_{j=1}^{N+2} \beta_j  +\sum_{i=1}^N\left[(\lambda-\alpha_{i,0})\underline{t}_{i,0}+\Delta t_{i,0}(\lambda-\alpha_{i,0})^+\right]x_{i,N} \\
\text{s.t.} & \ \beta_1 \geq -a_1  \pi_{1,1} +\sum_{i=1}^N \big [\underline{t}_{0,i} (\pi_{1,1}+\lambda-\alpha_{0,i})+\Delta t_{0,i} (\pi_{1,1}+\lambda-\alpha_{0,i})^+\big] x_{i,1},\\
& \ \sum_{j=1}^v \beta_j \geq -a_1  \pi_{1,v} + \sum_{j=2}^{\min(v,N+1)} \big ( a_{j-1}-a_j \big) \pi_{j,v}\nonumber \\
& \ \qquad \   + \sum_{i=1}^N \big [\underline{t}_{0,i} (\pi_{1,v}+\lambda-\alpha_{0,i})+\Delta t_{0,i} (\pi_{1,v}+\lambda-\alpha_{0,i})^+\big] x_{i,1} \nonumber \\
& \ \qquad \   + \sum_{j=2}^{\min(v,N+1)} \sum_{i=1}^N \sum_{i' \neq i} \big[ \underline{t}_{i,i'} (\pi_{j,v}+\lambda-\alpha_{i,i'})+ \Delta t_{i,i'} (\pi_{j,v}+\lambda-\alpha_{i,i'})^+  \big]x_{i,j-1}x_{i', j} \nonumber\\ 
& \ \qquad \   + \sum_{j=2}^{\min(v,N+1)} \sum_{i=1}^N \big[ \underline{d}_i (\pi_{j,v}-\rho_i)+ \Delta d_i (\pi_{j,v}-\rho_i)^+ \big ] x_{i,j-1}, \ \
 \forall v \in [2, N+2]_{\Z}, \\
& \ \sum_{j=k}^v\beta_j \geq \sum_{j=k}^{\min(v,N+1)} \big ( a_{j-1}-a_j\big)  \pi_{j,v}+\sum_{j=k}^{\min(v,N+1)} \sum_{i=1}^N \big[ \underline{d}_i (\pi_{j,v}-\rho_i)+ \Delta d_i (\pi_{j,v}-\rho_i)^+ \big ] x_{i,j-1} \nonumber\\
& \ \qquad \    \ + \sum_{j=k}^{\min(v,N+1)} \sum_{i=1}^N \sum_{i' \neq i}  \bigg\{ (\pi_{j,v}+\lambda-\alpha_{i,i'})\underline{t}_{i,i'}+\Delta t_{i,i'}(\pi_{j,v}+\lambda-\alpha_{i,i'})^+  \bigg\}x_{i,j-1}x_{i',j} \nonumber \\
& \ \qquad \    \ \forall j\in[2,N+1]_\Z,\, v\in[k,N+2]_\Z, \\
& \ \beta_{N+2} \geq 0.
\end{align}
\end{subequations}

Finally, we can introduce auxiliary variables to replace the terms with $(\cdot)^+$. In particular, we let $\gamma_{0,i,1,v}=(\pi_{1,v}+\lambda-\alpha_{0,i})^+$, $\gamma_{i,i',j,v}=(\pi_{j,v}+\lambda-\alpha_{i,i'})^+$, $\delta_{i,j,v} = (\pi_{j,v}-\rho_i)^+$ and introduce the following constraints.
\begin{subequations}
\begin{align}
& \ \gamma_{0,i,1,v}\geq 0,\quad \gamma_{0,i,1,v}\geq \pi_{1,v}+\lambda-\alpha_{0,i},\quad \forall i\in[N],\,\forall v\in[N+2] \label{M_DHRAS_L:Con5} \\
& \ \gamma_{i,i',j,v}\geq 0,\quad \gamma_{i,i',j,v}\geq \pi_{j,v}+\lambda-\alpha_{i,i'},\quad \forall i\in[N],\,\forall i'\in[N]\setminus\{i\}, \nonumber \\
& \hspace{69mm}  \forall j\in[2,N+1]_\Z,\,\forall v\in[j,N+2]_\Z  \label{M_DHRAS_L:Con6}  \\
& \ \delta_{i,j,v}\geq 0,\quad \delta_{i,j,v}\geq \pi_{j,v} -\rho_i,\quad\forall i\in[N],\, \forall j\in[2,N+1]_\Z,\,\forall v\in[j,N+2]_\Z  \label{M_DHRAS_L:Con7}
\end{align}
\end{subequations}
This completes the reformulation.
\end{proof}

\section{Proof of Corollary~\ref{Coro3}} \label{apdx: Coro_proof}
\begin{proof}
In view of Propositions \ref{Prop1M:DualMinMax} and \ref{Prop2M}, problem \eqref{eq:FinalDualInnerMax-1} (i.e., the inner problem  $\sup \limits_{\Prob \in \calF(\calS,\mu) }\E_{\Prob} [f(x,a, \db) ]$ in \eqref{M-DHRAS}) is equivalent to 
\begin{subequations}\label{InnerMINLP}
\begin{align} 
 \min_{\alphab,\,\rhob,\,\betab} & \  \sum \limits_{i=1}^N \mu_i^{\mbox{\tiny d}} \rho_i+ \sum_{j=2}^N\sum_{i=1}^N \sum_{i'\ne i} \mu_{i,i'}^{\mbox{\tiny t}}  \alpha_{i,i'}x_{i,j-1}x_{i',j}  + \sum \limits_{i=1}^N \mu_{i,0}^{\mbox{\tiny t}} \alpha_{i,0} x_{i,N} + \sum \limits_{i=1}^N \mu_{0,i}^{\mbox{\tiny t}} \alpha_{0,i} x_{i,1} \nonumber \\
 & \ \ \ + \sum \limits_{j=1}^{N+2} \beta_j  +\sum_{i=1}^N\left[(\lambda-\alpha_{i,0})\underline{t}_{i,0}+\Delta t_{i,0}(\lambda-\alpha_{i,0})^+\right]x_{i,N} \label{InnerMINLP:Obj}\\
\text{s.t.} & \ \alphab\in\R^{(N+1)\times(N+1)},\, \rhob\in\R^N,\, \betab\in\R^{N+2},\, \eqref{DHRASMINLP:Con1} - \eqref{DHRASMINLP:Con7}.
\end{align}
\end{subequations}
Noting that the variables $\alpha_{i,0}$ for $i\in[N]$ appear in the objective function only and it is easy to see that the minimizer is given by $\lambda$. Indeed, consider the terms involving $\alpha_{i,0}$, i.e., 
$$ \mut_{i,0}\alpha_{i,0}x_{i,N}+\left[(\lambda-\alpha_{i,0})\underline{t}_{i,0}+\Delta t_{i,0}(\lambda-\alpha_{i,0})^+\right]x_{i,N}.$$
If $\alpha_{i,0} \geq \lambda$, the term reads $[ (\mut_{i,0}-\tlb_{i,0})\alpha_{i,0}+\lambda  \tlb_{i,0}]x_{i,N}$, which is non-decreasing in $\alpha_{i,0}$ and if $\alpha_{i,0} \leq \lambda$, the terms read $[ (\mut_{i,0}-\tub_{i,0})\alpha_{i,0}+\lambda  \tlb_{i,0}]x_{i,N}$, which is non-increasing in $\alpha_{i,0}$. Plugging in the optimal $\alpha_{i,0}=\lambda$ to the objective function, we arrive at the desired reformulation.
\end{proof}

\section{Details of the final MINLP and MILP reformulation of M-DHRAS} \label{Appx:Mac_MDHRAS}

From Corollary \ref{Coro3}, we can reformulate M-DHRAS as the following MINLP.
\allowdisplaybreaks
\begin{subequations}
\begin{align} 
 \min & \  \sum \limits_{i=1}^N \mu_i^{\mbox{\tiny d}} \rho_i+ \sum_{j=2}^N\sum_{i=1}^N \sum_{i'\ne i} \mut_{i,i'}  \alpha_{i,i'}x_{i,j-1}x_{i',j}  + \lambda \sum \limits_{i=1}^N \mut_{i,0}  x_{i,N} + \sum \limits_{i=1}^N \mut_{0,i} \alpha_{0,i} x_{i,1}  + \sum \limits_{j=1}^{N+2} \beta_j  \\
\textup{s.t.} & \ \xb\in\calX,\, \ab\in\calA,\, \alphab\in\R^{(N+1)\times(N+1)},\, \rhob\in\R^N,\, \betab\in\R^{N+2}, \\
 & \ \beta_1 \geq -a_1  \pi_{1,1} +\sum_{i=1}^N \big [\underline{t}_{0,i} (\pi_{1,1}+\lambda-\alpha_{0,i})+\Delta t_{0,i} \gamma_{0,i,1,1}\big] x_{i,1} , \\
& \ \sum_{j=1}^v \beta_j \geq -a_1  \pi_{1,v} + \sum_{j=2}^{\min(v,N+1)} \big ( a_{j-1}-a_j \big) \pi_{j,v}   + \sum_{i=1}^N \big [\underline{t}_{0,i} (\pi_{1,v}+\lambda-\alpha_{0,i})+\Delta t_{0,i} \gamma_{0,i,1,v}\big] x_{i,1}  \nonumber \\
& \ \qquad \   + \sum_{j=2}^{\min(v,N+1)} \sum_{i=1}^N \sum_{i' \neq i} \big[ \underline{t}_{i,i'} (\pi_{j,v}+\lambda-\alpha_{i,i'})+ \Delta t_{i,i'} \gamma_{i,i',j,v}  \big]x_{i,j-1}x_{i', j} \nonumber\\ 
& \ \qquad \   + \sum_{j=2}^{\min(v,N+1)} \sum_{i=1}^N \big[ \underline{d}_i (\pi_{j,v}-\rho_i)+ \Delta d_i \delta_{i,j,v} \big ] x_{i,j-1}, \ \
 \forall v \in [2, N+2]_{\Z} ,  \\
& \ \sum_{j=k}^v\beta_j \geq \sum_{j=k}^{\min(v,N+1)} \big ( a_{j-1}-a_j\big)  \pi_{j,v}+\sum_{j=k}^{\min(v,N+1)} \sum_{i=1}^N \big[ \underline{d}_i (\pi_{j,v}-\rho_i)+ \Delta d_i \delta_{i,j,v} \big ] x_{i,j-1} \nonumber\\
& \ \qquad \    \ + \sum_{j=k}^{\min(v,N+1)} \sum_{i=1}^N \sum_{i' \neq i}  \bigg\{ (\pi_{j,v}+\lambda-\alpha_{i,i'})\underline{t}_{i,i'}+\Delta t_{i,i'}\gamma_{i,i',j,v}  \bigg\}x_{i,j-1}x_{i',j},   \nonumber \\
& \ \qquad \    \  \forall k\in[2,N+1]_\Z,\,\forall v\in[k,N+2]_\Z , \\
& \ \beta_{N+2} \geq 0 , \\
& \ \gamma_{0,i,1,v}\geq 0,\quad \gamma_{0,i,1,v}\geq \pi_{1,v}+\lambda-\alpha_{0,i},\quad \forall i\in[N],\,\forall v\in[N+2] , \\
& \ \gamma_{i,i',j,v}\geq 0,\quad \gamma_{i,i',j,v}\geq \pi_{j,v}+\lambda-\alpha_{i,i'},\quad \forall i\in[N],\,\forall i'\in[N]\setminus\{i\}, \nonumber \\
& \hspace{70mm} \forall j\in[2,N+1]_\Z,\,\forall v\in[j,N+2]_\Z ,   \\
& \ \delta_{i,j,v}\geq 0,\quad \delta_{i,j,v}\geq \pi_{j,v} -\rho_i,\quad\forall i\in[N],\,\, \forall j\in[2,N+1]_\Z,\,\forall v\in[j,N+2]_\Z .
\end{align}
\end{subequations}
Note that this is non-linear due to the interaction terms (such as $x_{i,j-1}x_{i',j}$ in the objective, $\alpha_{i,i'} x_{i,j-1}x_{i',j}$, $\gamma_{0,i,1,v}x_{i,1}$ and $\rho_i x_{i,j-1}$ in the constraints). To linearize this MINLP formulation, we define variables $\tau_{i,i',j-1, j}= x_{i,j-1}x_{i',j}$, $\eta_{i,i',j-1, j}= \alpha_{i,i'} \tau_{i,i',j-1, j}$, $\psi_{0,i}=\alpha_{0,i} x_{i,1}$, $\sigma_{0,i,1,v}=\gamma_{0,i,1,v}x_{i,1}$, $\phi_{i,i',j,v}=\gamma_{i,i',j,v}\tau_{i,i',j-1,j}$, $\xi_{i,j,v}=\delta_{i,j,v}x_{i,j-1}$ and $\zeta_{i,j}=\rho_i x_{i,j-1}$. We also introduce McCormick inequalities \eqref{MILP-MAC1M} to \eqref{MILP-MAC11M} for these variables.
\begin{subequations}
\begin{align}
&  \tau_{i,i',j-1,j} \geq x_{i,j-1}+x_{i',j}-1, \ \tau_{i,i',j-1,j} \geq 0, \label{MILP-MAC1M}\\
&  \tau_{i,i',j-1,j} \leq x_{i,j-1}, \ \tau_{i,i',j-1,j} \leq x_{i',j} \label{MILP-MAC2M}\\
& \eta_{i,i',j-1,j} \geq \alphalb_{i,i'} \tau_{i,i',j-1,j}, \ \eta_{i,i',j-1,j} \geq \alpha_{i,i'}+\alphaub_{i,i'}(\tau_{i,i',j-1,j}-1), \label{MILP-MAC3M}\\
&  \eta_{i,i',j-1,j}  \leq \alphaub_{i,i'} \tau_{i,i',j-1,j}, \ \eta_{i,i',j-1,j} \leq \alpha_{i,i'}+\alphalb_{i,i'}(\tau_{i,i',j-1,j}-1) \label{MILP-MAC4M}\\
& \psi_{0,i} \geq \alphalb_{0,i}x_{i,1}, \ \psi_{0,i} \geq \alpha_{0,i}+\alphaub_{0,i} (x_{i,1}-1), \ \psi_{0,i} \leq \alphaub_{0,i} x_{i,1}, \ \psi_{0,i} \leq \alpha_{0,i} + \alphalb_{0,i} (x_{i,1}-1) \label{MILP-MAC5M}\\
&  \phi_{i,i',j,v} \geq \gamma_{i,i',j,v}+\gammaub_{i,i',j,v}(\tau_{i,i',j-1,j}-1), \ \phi_{i,i',j,v}\geq 0 \label{MILP-MAC6M}\\
&  \phi_{i,i',j,v} \leq \gammaub_{i,i',j,v}\tau_{i,i',j-1,j}, \ \phi_{i,i',j,v}\leq \gamma_{i,i',j,v} \label{MILP-MAC7M} \\
&  \sigma_{0,i,1,v} \geq \gamma_{0,i,1,v}+\gammaub_{0,i,1,v}(x_{i,1}-1), \ \sigma_{0,i,1,v}\geq 0 \label{MILP-MAC8M} \\
&  \sigma_{0,i,1,v} \leq \gammaub_{0,i,1,v}x_{i,1}, \ \sigma_{0,i,1,v}\leq \gamma_{0,i,1,v} \label{MILP-MAC9M} \\
&  \xi_{i,j,v} \geq \delta_{i,j,v}+\deltaub_{i,j,v}(x_{i,j-1}-1), \ \xi_{i,j,v} \geq 0, \ \xi_{i,j,v} \leq \delta_{i,j,v}, \ \xi_{i,j,v}\leq\deltaub_{i,j,v}x_{i,j-1} \label{MILP-MAC10M}\\
& \zeta_{i,j} \geq \rholb_{i}x_{i,j-1}, \ \zeta_{i,j} \geq \rho_{i}+\rhoub_{i} (x_{i,j-1}-1), \ \zeta_{i,j} \leq \rhoub_{i} x_{i,j-1}, \ \zeta_{i,j} \leq \rho_{i} + \rholb_{i} (x_{i,j-1}-1) \label{MILP-MAC11M}
\end{align}
\end{subequations}
The notations $\overline{\cdot}$ and $\underline{\cdot}$ are the upper and lower bounds for the variable respectively. Therefore, we obtain an MILP reformulation of the M-DHRAS model.
\allowdisplaybreaks
\begin{subequations}
\begin{align} 
 \min & \  \sum \limits_{i=1}^N \mu_i^{\mbox{\tiny d}} \rho_i+ \sum_{j=2}^N\sum_{i=1}^N \sum_{i'\ne i} \mut_{i,i'}  \eta_{i,i',j-1,j}  + \lambda \sum \limits_{i=1}^N \mut_{i,0}  x_{i,N} + \sum \limits_{i=1}^N \mut_{0,i} \psi_{0,i}  + \sum \limits_{j=1}^{N+2} \beta_j  \\
\textup{s.t.} & \ \xb\in\calX,\, \ab\in\calA,\, \alphab\in\R^{(N+1)\times(N+1)},\, \rhob\in\R^N,\, \betab\in\R^{N+2}, \\
& \text{constraints } \eqref{MILP-MAC1M}-\eqref{MILP-MAC11M}, \\
 & \ \beta_1 \geq -a_1  \pi_{1,1} +\sum_{i=1}^N \big [\underline{t}_{0,i} (\pi_{1,1}x_{i,1}+\lambda x_{i,1}-\psi_{0,i})+\Delta t_{0,i} \sigma_{0,i,1,1}\big], \label{M-DHRAS_MILP_Con2}\\
& \ \sum_{j=1}^v \beta_j \geq -a_1  \pi_{1,v} + \sum_{j=2}^{\min(v,N+1)} \big ( a_{j-1}-a_j \big) \pi_{j,v}   + \sum_{i=1}^N \big [\underline{t}_{0,i} (\pi_{1,v}x_{i,1}+\lambda x_{i,1}-\psi_{0,i})+\Delta t_{0,i} \sigma_{0,i,1,v}\big]   \nonumber \\
& \ \qquad \   + \sum_{j=2}^{\min(v,N+1)} \sum_{i=1}^N \sum_{i' \neq i} \big[ \underline{t}_{i,i'} (\pi_{j,v}\tau_{i,i',j-1,j}+\lambda\tau_{i,i',j-1,j}-\eta_{i,i',j-1,j})+ \Delta t_{i,i'} \phi_{i,i',j,v}  \big] \nonumber\\ 
& \ \qquad \   + \sum_{j=2}^{\min(v,N+1)} \sum_{i=1}^N \big[ \underline{d}_i (\pi_{j,v} x_{i,j-1}-\zeta_{i,j})+ \Delta d_i \xi_{i,j,v} \big ], \
 \forall v \in [2, N+2]_{\Z} , \label{M-DHRAS_MILP_Con3} \\
& \ \sum_{j=k}^v\beta_j \geq \sum_{j=k}^{\min(v,N+1)} \big ( a_{j-1}-a_j\big)  \pi_{j,v}+\sum_{j=k}^{\min(v,N+1)} \sum_{i=1}^N \big[ \underline{d}_i (\pi_{j,v} x_{i,j-1} -\zeta_{i,j})+ \Delta d_i \xi_{i,j,v} \big ]  \nonumber\\
& \ \qquad \    \ + \sum_{j=k}^{\min(v,N+1)} \sum_{i=1}^N \sum_{i' \neq i}  \bigg\{ (\pi_{j,v}\tau_{i,i',j-1,j}+\lambda\tau_{i,i',j-1,j}-\eta_{i,i',j-1,j})\underline{t}_{i,i'}+\Delta t_{i,i'}\phi_{i,i',j,v}  \bigg\},   \nonumber \\
& \ \qquad \    \  \forall k\in[2,N+1]_\Z,\,\forall v\in[k,N+2]_\Z , \label{M-DHRAS_MILP_Con4}\\
& \ \beta_{N+2} \geq 0 , \label{M-DHRAS_MILP_Con5}\\
& \ \gamma_{0,i,1,v}\geq 0,\quad \gamma_{0,i,1,v}\geq \pi_{1,v}+\lambda-\alpha_{0,i},\quad \forall i\in[N],\,\forall v\in[N+2] , \\
& \ \gamma_{i,i',j,v}\geq 0,\quad \gamma_{i,i',j,v}\geq \pi_{j,v}+\lambda-\alpha_{i,i'},\quad \forall i\in[N],\,\forall i'\in[N]\setminus\{i\}, \nonumber \\
& \hspace{70mm} \forall j\in[2,N+1]_\Z,\,\forall v\in[j,N+2]_\Z ,   \\
& \ \delta_{i,j,v}\geq 0,\quad \delta_{i,j,v}\geq \pi_{j,v} -\rho_i,\quad\forall i\in[N],\,\, \forall j\in[2,N+1]_\Z,\,\forall v\in[j,N+2]_\Z .
\end{align}
\end{subequations}

Note that the McCormick inequalities often rely on big-M coefficients (i.e., variable lower and upper bounds) which take large values and can undermine the computational efficiency. Next, we derive tight bounds of these  big-M coefficients to strengthen formulation.
\begin{prop} \label{Prop:McCormick}
Let $P^u_1 = \max_{j\in[2,N+1]_\Z,\, v\in[j,N+2]_\Z} \pi_{j,v}$ and $P^l_1 = \min_{j\in[2,N+1]_\Z,\, v\in[j,N+2]_\Z} \pi_{j,v}$. Also, let $P^u_2 = \max_{v\in[N+2]} \pi_{1,v}$ and $P^l_2 = \min_{v\in[N+2]} \pi_{1,v}$. Then, the lower bounds are $\rholb_i=P_1^l$, $\alphalb_{i,i'}=P_1^l-\lambda$ and $\alphalb_{0,i}=P_2^l-\lambda$ while the upper bounds are $\rhoub_i = P_1^u$, $\alphaub_{i,i'}=P_1^u+\lambda$ and $\alphaub_{0,i}=P_2^u+\lambda$.
\end{prop}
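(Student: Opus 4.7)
My plan is to establish each of the six bounds via a perturbation/exchange argument on an optimal solution of the MINLP in \ref{Appx:Mac_MDHRAS}: starting from any such optimum, I move $\rho_i$, $\alpha_{i,i'}$, and $\alpha_{0,i}$ into the stated intervals while preserving feasibility and not increasing the objective. Once this is shown, the McCormick envelopes built with these big-M coefficients are exact at an optimum of the MILP reformulation.

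I start with $\rho_i$. Fix an optimal solution and let $j^\star$ denote the position of customer $i$, i.e., the unique index with $x_{i,j^\star}=1$. Then $\rho_i$ appears only in the objective term $\mud_i\rho_i$, in the family \eqref{DHRASMINLP:Con7} via $\delta_{i,j,v}\geq \pi_{j,v}-\rho_i$, and in constraints \eqref{DHRASMINLP:Con2}--\eqref{DHRASMINLP:Con3} through the single position index $j^\star+1$, contributing $\dlb_i(\pi_{j^\star+1,v}-\rho_i)+\Delta d_i\,\delta_{i,j^\star+1,v}$ to every constraint whose index pair $(k,v)$ satisfies $k\leq j^\star+1\leq v$. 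If $\rho_i>P_1^u$, then $\pi_{j^\star+1,v}-\rho_i<0$ for all relevant $v$, so we may take $\delta_{i,j^\star+1,v}=0$. Decreasing $\rho_i$ by $\Delta>0$ reduces the objective by $\mud_i\Delta$ and raises each affected constraint's RHS by exactly $\dlb_i\Delta$; this is restored by a single compensating increase of $\beta_{j^\star+1}$ by $\dlb_i\Delta$, because $\beta_{j^\star+1}$ lies inside every LHS sum $\sum_{j=k}^v \beta_j$ with $k\leq j^\star+1\leq v$. The net objective change is $(\dlb_i-\mud_i)\Delta \leq 0$, using $\dlb_i\leq\mud_i$, so $\rho_i$ can be lowered to $P_1^u$. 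Symmetrically, if $\rho_i<P_1^l$, then $\pi_{j^\star+1,v}-\rho_i>0$ for all relevant $v$, the aggregate RHS contribution becomes $\dub_i(\pi_{j^\star+1,v}-\rho_i)$, and raising $\rho_i$ with an offsetting decrease of $\beta_{j^\star+1}$ yields net change $(\mud_i-\dub_i)\Delta\leq 0$, using $\mud_i\leq\dub_i$.

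The arguments for $\alpha_{i,i'}$ and $\alpha_{0,i}$ are structurally identical. For $\alpha_{i,i'}$ with $i,i'\in[N]$, the variable appears nontrivially only if edge $(i,i')$ is traversed at some unique position $j^\star$; the objective coefficient is $\mut_{i,i'}$, and the constraint couplings involve $\pi_{j^\star,v}+\lambda-\alpha_{i,i'}$ (this is where the additive $\lambda$ enters). The same push-and-compensate step, now using $\tlb_{i,i'}\leq\mut_{i,i'}\leq\tub_{i,i'}$, confines an optimal $\alpha_{i,i'}$ to $[P_1^l+\lambda,P_1^u+\lambda]\subseteq[P_1^l-\lambda,P_1^u+\lambda]$, so the (looser) stated interval is valid and additionally covers the case where edge $(i,i')$ is unused and $\alpha_{i,i'}$ is free. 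For $\alpha_{0,i}$ the coupling $\pi$-values run only over $\pi_{1,v}$ with $v\in[N+2]$, giving the tighter interval $[P_2^l+\lambda,P_2^u+\lambda]\subseteq[P_2^l-\lambda,P_2^u+\lambda]$, and compensation is done via $\beta_1$.

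The main obstacle is bookkeeping: one must identify, for each dual variable, the precise set of constraint indices $(k,v)$ affected by its perturbation and verify that a single $\beta_j$ adjustment compensates them uniformly. This is made tractable by the structural observation that each of the dual variables in question is anchored to exactly one route position $j^\star$ (with $j^\star=1$ for $\alpha_{0,i}$), and the corresponding $\beta_{j^\star}$ (or $\beta_{j^\star+1}$) sits inside every affected LHS sum, so one scalar compensation suffices across all affected constraints without disturbing any unrelated part of the solution.
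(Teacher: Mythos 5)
Your proof is correct, but it operates at a different level of the reformulation than the paper's. The paper argues on the \emph{pre-dual} side: it collects the terms of the inner integer program \eqref{IPFull} involving $\rho_i$, namely $\mud_i\rho_i+\sum_{j,k,v}[(\pi_{j,v}-\rho_i)\dlb_i+\Delta d_i(\pi_{j,v}-\rho_i)^+]x_{i,j-1}b_{k,v}$, and observes directly that this is non-decreasing in $\rho_i$ on $[P_1^u,\infty)$ (slope $\mud_i-\dlb_i\geq 0$) and non-increasing on $(-\infty,P_1^l]$ (slope $\mud_i-\dub_i\leq 0$), so a minimizer lies in $[P_1^l,P_1^u]$; the $\alpha$ cases are declared analogous. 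You instead work on the \emph{dualized} MINLP with the $\beta_j$ variables and carry out an explicit exchange: push $\rho_i$ toward the box and compensate with a single $\beta_{j^\star+1}$. This is the LP-dual mirror of the same monotonicity fact, and the slopes $(\dlb_i-\mud_i)\Delta$ and $(\mud_i-\dub_i)\Delta$ you compute are exactly the paper's. The one nontrivial thing your route requires — that every constraint containing $\beta_{j^\star+1}$ on the left is affected by the perturbation on the right by the same amount, so one scalar compensation suffices — you identify and verify correctly, using the fact that each of $\rho_i$, $\alpha_{i,i'}$, $\alpha_{0,i}$ is anchored to a single route position. What your version buys is a certificate directly on the formulation that is actually solved, plus the observation that the true interval for $\alpha_{i,i'}$ is the tighter $[P_1^l+\lambda,P_1^u+\lambda]$ (and $[P_2^l+\lambda,P_2^u+\lambda]$ for $\alpha_{0,i}$), of which the proposition's stated $P_1^l-\lambda$ and $P_2^l-\lambda$ lower bounds are valid but loose relaxations; the paper's version is shorter because it never has to touch the $\beta$ bookkeeping. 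No gaps.
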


\begin{proof}
We only present the case for $\rho_i$ and the remaining two cases are similar. From the model in Proposition \ref{Prop1M:DualMinMax} and its reformulation \eqref{IPFull} with \eqref{IP:5term} to \eqref{IP:6term}, the terms involving $\rho_i$ can be summarized as 
$$\mud_i \rho_i + \sum_{j=2}^{N+1} \sum_{k=1}^j \sum_{v=j}^{N+2} \left[(\pi_{j,v}-\rho_i) \dlb_i + \Delta d_i (\pi_{j,v}-\rho_i)^+ \right] x_{i,j-1} b_{k,v}.$$
If $\rho_i\geq P_1^u$,  we can simplify the terms as
$$\mud_i\rho_i+ \sum_{j=2}^{N+1} \sum_{k=1}^j \sum_{v=j}^{N+2} (\pi_{j,v}-\rho_i)\dlb_i x_{i,j-1} b_{k,v} = (\mud_i-\dlb_i)\rho_i + \sum_{j=2}^{N+1} \sum_{k=1}^j \sum_{v=j}^{N+2}  x_{i,j-1}\pi_{j,v}b_{k,v},$$
which is non-decreasing in $\rho_i$ since $\mud_i - \dlb_i \geq 0$. Similarly, if $\rho_i \leq P^l_1$, then the terms read 
$$\mud_i\rho_i+ \sum_{j=2}^{N+1} \sum_{k=1}^j \sum_{v=j}^{N+2} (\pi_{j,v}-\rho_i)\dub_i x_{i,j-1} b_{k,v} = (\mud_i-\dub_i)\rho_i + \sum_{j=2}^{N+1} \sum_{k=1}^j \sum_{v=j}^{N+2}  x_{i,j-1}\pi_{j,v}b_{k,v},$$
which is non-increasing in $\rho_i$ since $\mud_i - \dub_i \leq 0$. This shows that there exists a minimizer over $\rho_i$ lying in $[P^l_1, P^u_1]$ and we can set $\rholb_i=P_1^l$ and  $\rhoub_i = P_1^u$.
\end{proof}

\begin{cor}
We have lower bounds $\deltalb_{i,j,v}=0$, $\gammalb_{i,i',j,v}=0$ and $\gammalb_{0,i,1,v}=0$ with upper bounds $\deltaub_{i,j,v}=\pi_{j,v}-P^l_1$, $\gammaub_{i,i',j,v}=\pi_{j,v}+2\lambda-P^l_1$ and $\gammaub_{0,i,1,v}=\pi_{1,v}+2\lambda-P^l_2$.
\end{cor}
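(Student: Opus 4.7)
The plan is to read the upper bounds straight off the definitions of the auxiliary variables together with the bounds on $\rhob$ and $\alphab$ already established in Proposition~\ref{Prop:McCormick}. Recall that in the proof of Proposition~\ref{Prop2M} the variables $\delta_{i,j,v}$, $\gamma_{i,i',j,v}$ and $\gamma_{0,i,1,v}$ were introduced as linearizations of the positive-part terms
\[
\delta_{i,j,v} = (\pi_{j,v}-\rho_i)^+,\quad \gamma_{i,i',j,v} = (\pi_{j,v}+\lambda-\alpha_{i,i'})^+,\quad \gamma_{0,i,1,v} = (\pi_{1,v}+\lambda-\alpha_{0,i})^+,
\]
and are enforced in \eqref{DHRASMINLP:Con5}--\eqref{DHRASMINLP:Con7} as the pointwise maximum of zero and an affine function. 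The lower bounds $\deltalb = \gammalb = 0$ therefore follow immediately from the nonnegativity built into these constraints.

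For the upper bounds, the key observation is that $(a)^+$ is nondecreasing in $a$, so each auxiliary variable attains its maximum when the subtracted dual variable ($\rho_i$ or $\alpha_{i,i'}$ or $\alpha_{0,i}$) is as small as possible. Substituting the lower bounds from Proposition~\ref{Prop:McCormick}, namely $\rholb_i = P_1^l$, $\alphalb_{i,i'} = P_1^l - \lambda$ and $\alphalb_{0,i} = P_2^l - \lambda$, one obtains
\[
\deltaub_{i,j,v} = \pi_{j,v} - P_1^l,\quad \gammaub_{i,i',j,v} = \pi_{j,v} + \lambda - (P_1^l - \lambda) = \pi_{j,v} + 2\lambda - P_1^l,
\]
and analogously $\gammaub_{0,i,1,v} = \pi_{1,v} + 2\lambda - P_2^l$, which are exactly the claimed bounds.

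The only subtlety is ensuring that it is without loss of optimality to clamp each auxiliary variable to its range above, even though a priori $\delta$ and $\gamma$ could take any nonnegative value consistent with \eqref{DHRASMINLP:Con5}--\eqref{DHRASMINLP:Con7}. This is immediate once one notes that the auxiliary variables enter the MINLP objective \eqref{DHRAS_MINLP_Obj} and constraints \eqref{DHRASMINLP:Con1}--\eqref{DHRASMINLP:Con3} only with nonnegative coefficients (the multipliers are $\Delta d_i, \Delta t_{i,i'}, \Delta t_{0,i} \geq 0$ by definition of the support), so at optimality each variable is pushed to the smallest feasible value, i.e.\ exactly $\max\{0,\pi_{j,v}-\rho_i\}$ (resp.\ the analogous expressions), which is bounded above by the quantities just derived via the Proposition~\ref{Prop:McCormick} bounds on $\rho_i,\alpha_{i,i'},\alpha_{0,i}$. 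No part of this is a true obstacle; the argument is essentially bookkeeping once Proposition~\ref{Prop:McCormick} is in hand.
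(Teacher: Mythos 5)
Your proposal is correct and follows essentially the same route as the paper: both read the lower bounds off the nonnegativity of the positive-part definitions and obtain the upper bounds by plugging the Proposition~\ref{Prop:McCormick} lower bounds on $\rho_i$, $\alpha_{i,i'}$, $\alpha_{0,i}$ into $(\pi_{j,v}-\rho_i)^+$ and its analogues, using that the resulting expressions are nonnegative so the positive part is the expression itself. Your extra remark about the auxiliary variables being pushed down to the positive-part values at optimality is sound but not needed for the corollary; the paper omits it.
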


\begin{proof}
We only present the case for $\delta_{i,j,v}$ and the remaining two cases are similar. Recall $\delta_{i,j,v}=(\pi_{j,v}-\rho_i)^+$ and from Proposition \ref{Prop:McCormick}, we have $$\pi_{j,v}-P^u_1 \leq \pi_{j,v}-\rho_i \leq \pi_{j,v} - P^l_1\,,$$ where the lower bound is no greater than $0$ and the upper bound is no less than $0$.
\end{proof}

\section{Proof of Lemma 1 Adapted from \cite{jiang2019data} }\label{Proof_Lemma1}
\noindent \textbf{Lemma 1.} (of  \cite{jiang2019data}). \textit{Suppose that $\calS$ is non-empty, convex and compact. Then there exist nonegative constants $c_1$ and $c_2$ such that, for all $R\geq 1$ and $\beta \in (0, \min \{ 1, c_1\} )$,}
\begin{equation*}
\Prob_{\xib}^R \Big \{ W_p (\Prob_{\xib}, \hat{\Prob}_{\xib}^R) \leq \epsilon_R(\beta) \Big\} \geq 1-\beta
\end{equation*}
\textit{where $\Prob_{\xib}^R$ represents the product measure of $R$ copies of $\Prob_{\xib}$ and $\epsilon_R(\beta)= \Big [ \frac{\log(c_1\beta^{-1})}{c_2R}\Big]^{\frac{1}{\max \{ 3p, n\}}}$.}

\vspace{2mm}
For completeness, we provide the proof of Lemma 1 of \cite{jiang2019data} as detailed in their paper, which is adapted from Theorem 2 in \cite{fournier2015rate}. 

\begin{proof} 
Notice that by the compactness assumption of the support, there exist $\alpha>p$ and $\gamma>0$ such that $\E_{\Prob_{\xib}}[\exp\{\gamma||\xi||_p^\alpha\}]<\infty$. By Theorem 2 in \cite{fournier2015rate}, for any $R\geq 1$ and $\epsilon\in(0,\infty)$, there exist positive constants $c$ and $C$ depending only on $p$, $n$, $\alpha$ and $\gamma$ such that 
$$\Prob_{\xib}^R\left(W_p(\Prob_{\xib},\hat{\Prob}_{\xib}^R)\geq \epsilon^{1/p}\right)\leq a(R,\epsilon)\mathbbm{1}(\epsilon\leq1)+b(R,\epsilon),$$ 
where 
\begin{equation}
a(R, \epsilon)=C\left\{\begin{array}{ll}
\exp \left\{-c R \epsilon^{2}\right\} & \text { if } p>n / 2 \\
\exp \left\{-c R(\epsilon / \log (2+1 / \epsilon))^{2}\right\} & \text { if } p=n / 2 \\
\exp \left\{-c R \epsilon^{n / p}\right\} & \text { if } p \in[1, n / 2)
\end{array}\right.
\end{equation}
and $b(R,\epsilon)=C\exp\{-CR\epsilon^{\alpha/p}\}\mathbbm{1}(\epsilon>1)$ with $n$ being the dimension of the random vector $\xib$. We first bound the term $a(R,\epsilon)$ for $\epsilon\in(0,1]$. Notice that $$\left[\frac{\epsilon}{\log(2+1/\epsilon)}\right]^2 \geq \frac{\epsilon^3}{[\log(3)]^2}\,,$$ which immediately gives $\epsilon[\log(2+1/\epsilon)]^2 \leq [\log(3)]^2$. Then, we have $$a(R,\epsilon)\leq C\exp\left\{ -\frac{c}{[\log(3)]^2}R\epsilon^{\max\{3,n/p\}} \right\}.$$ Next, to bound the term $b(R,\epsilon)$, let $\alpha=\max\{3p,n\} > p$. Then, we have $$b(R,\epsilon)\leq C\exp\left\{ -cR\epsilon^{\max\{3,n/p\}}\right\}.$$ To sum up, we arrive at $$\Prob_{\xib}^R\left(W_p(\Prob_{\xib},\hat{\Prob}_{\xib}^R)\geq \epsilon^{1/p}\right)\leq  c_1\exp\left\{ -c_2R\epsilon^{\max\{3,n/p\}}\right\},$$ where $c_1=C$ and $c_2=c/[\log(3)]^2$. By equating the right hand side of the last inequality to $\beta$, we obtain $\epsilon=[(c_2 R)^{-1}\log(c_1\beta^{-1})]^{-\max\{3,n/p\}^{-1}}$. Plugging in this expression to the last inequality gives the desired result.
\end{proof}

\section{Proof of Theorem 1 from \cite{jiang2019data}}\label{Proof_Thrm1}

\noindent \textbf{Theorem 1.} (Asymptotic consistency, adapted from \citealp{jiang2019data} and Theorem 3.6 of \citealp{esfahani2018data}). \textit{Suppose that the support $\calS$ is non-empty, convex and compact. Consider a sequence of confidence levels $\{ \beta_R\}_{R \in \mathbb{R}}$ such that $\sum_{R=1}^\infty \beta_R <\infty $ and $\lim_{R\rightarrow \infty} \epsilon_R(\beta_R)=0$, and let $(\hat{\xb} (R, \epsilon_R(\beta_R)),\hat{\ab}(R, \epsilon_R(\beta_R)))$ represent an optimal solution to W-DHRAS with the ambiguity set $\calF_p (\hat{\Prob}_{\xib}^R, \epsilon_R(\beta_R))$. Then, $\Prob_{\xib}^\infty$- almost surely we have $\hat{Z}(R, \epsilon_R(\beta_R))\rightarrow Z^*$ as $R\rightarrow \infty$. In addition, any accumulation points of  $\{(\hat{\xb} (R, \epsilon_R(\beta_R)),\hat{\ab}(R, \epsilon_R(\beta_R))) \}_{R \in \mathbb{N}}$ is an optimal solution of \eqref{SP} $\Prob_{\xib}^\infty$- almost surely. }

\begin{proof} 
Recall the dual of $f(\xb,\ab,\xib)$. 
\begin{subequations}
\begin{align}
  f(\xb, \ab,\xib )=  & \max_{\yb} \Bigg(\sum_{i=1}^N t_{0,i}x_{i,1}-a_1\Bigg)y_1+  \sum_{j=2}^{N+1} \Bigg ( a_{j-1}-a_j+\sum_{i=1}^N d_ix_{i,j-1}+ \sum_{i=1}^N \sum_{i'\neq i} t_{i,i'} x_{i,j-1}x_{i,j}\Bigg)y_j  \nonumber\\
  & \text{s.t.}  \ \  \yb\in\calY= \{\yb:\, y_{N+1} \leq c^{\mbox{\tiny o}}, \ -c_j^{\mbox{\tiny u}}\leq y_j \leq c_j^{\mbox{\tiny w}}+ y_{j+1}, \ \forall j \in [N]\}. \nonumber
\end{align} 
\end{subequations}
For a fixed pair of $(\xb,\ab)\in\calX\times\calA$, this is an LP in $\yb$. Note that $\calY$ is bounded, which implies that $f(\ab,\xb,\xib)$ is finite. Also, since $\calX\times\calA$ and $\calS$ are bounded sets, the function $f(\xb, \ab,\xib)$ is bounded on $\calX\times\calA\times\calS$. Immediately, we have $|f(\xb,\ab,\xib)|\leq M(1+\norms{\xib})$, where $M$ is just the upper bound on $f$. Next, we claim that $f(\xb, \ab,\xib)$ is continuous on $\calX\times\calA\times\calS$. For simplicity, write the objective function of the dual problem as $b(\xb,\ab,\xib)^\top y$, where the $i$-th entry of $b(\xb,\ab,\xib)$ is the coefficient associated to $y_i$. By fundamental theorem of LP, instead of maximizing over the entire $\calY$, we can maximize over the set of finite extreme points of $\calY$. Hence, $f(\xb, \ab,\xib)$ is the maximum of finitely many linear functions and the continuity follows.
Finally, the result follows directly from Theorem 3.6 of \cite{esfahani2018data}, where the required conditions are verified.
\end{proof}

\section{Proof of Theorem 2 in \cite{jiang2019data} }\label{Proof_Thrm2}

\noindent \textbf{Theorem 2.} (Finite-data guarantee, adapted from \citealp{jiang2019data} and Theorem 3.5 in \citealp{esfahani2018data}).  \textit{For any $\beta \in (0, 1)$, let $(\hat{\xb}(R, \epsilon_R(\beta_R)),  \hat{\ab}(R, \epsilon_R(\beta_R)))$ represent an optimal solution of W-DHRAS with ambiguity set $\calF_p (\hat{\Prob}_{\xib}^R, \epsilon_R(\beta_R))$. Then,}
$$\Prob_{\xib}^R \Big \{  \E_{\Prob_{\xib}} [f (  \hat{\xb}(R, \epsilon_R(\beta_R)),  \hat{\ab}(R, \epsilon_R(\beta_R)), \xib) ] ) \leq \hat{Z} (R, \epsilon_R(\beta_R))\Big\} \geq 1- \beta. $$ 

\begin{proof}
(adapted from \citealp{jiang2019data}).   By the assumption on support $\calS$ and Lemma 1, all conditions of Theorem 3.5 in \cite{esfahani2018data} are satisfied. Therefore, the conclusions of Theorem 2 hold valid.
\end{proof}

\section{Proof of Proposition~\ref{Prop1}}\label{Proof_Prop1}

\begin{proof} 
Recall that $\hat{\Prob}^R_{\xib}=\frac{1}{R}\sum_{r=1}^R \delta_{\hat{\xib}^r}$. For any $\Prob_{\xib}\in\mathcal{P}(\calS)$, we can rewrite the joint distribution $\Pi\in\mathcal{P}(\Prob_{\xib},\hat{\Prob}^R_{\xib})$ by the conditional distribution of $\xib$ given $\hat{\xib}=\hat{\xib}^r$ for $r=1,\dots,R$, denoted as $\Q_{\xib}^r$. That is, $\Pi=\frac{1}{R}\sum_{r=1}^R (\delta_{\hat{\xib}^r} \times \Q_{\xib}^r)$. Notice that if we find one joint distribution $\Pi\in\mathcal{P}(\Prob_{\xib},\hat{\Prob}^R_{\xib})$ such that $\int ||\xib-\hat{\xib}||_1 d\Pi \leq \epsilon$, then $W_1(\Prob_{\xib},\hat{\Prob}_{\xib}^R)\leq\epsilon$. Hence, we can drop the infimum operator in Wasserstein distance and arrive at the following equivalent problem
\begin{subequations}
\begin{align}
& \sup_{\Q_{\xib}^r\in\mathcal{P}(\calS), r\in[R]} \frac{1}{R}\sum_{r=1}^R\int_\calS f(\xb,\ab,\xib) d\Q_{\xib}^r\\
& \ \ \ \ \ \ \ \text{s.t.}  \ \ \ \ \ \  \frac{1}{R} \sum_{r=1}^R  \int_\calS \norms{\xib-\hat{\xib}^r}_1 d\Q_{\xib}^r \leq \epsilon .
\end{align}
\end{subequations}
Using a standard strong duality argument, we can reformulate the problem by its dual, i.e.,
\begin{align}
&\quad\,\inf_{\rho\geq 0} \sup_{\Q_{\xib}^r\in\mathcal{P}(\calS),r\in[R]}\left\{\frac{1}{R}\sum_{r=1}^R\int_\calS f(\xb,\ab,\xib) d\Q_{\xib}^r+\rho\left[\epsilon-\frac{1}{R} \sum_{r=1}^R  \int_\calS \norms{\xib-\hat{\xib}^r}_1 d\Q_{\xib}^r \right] \right\}\nonumber\\
&=\inf_{\rho\geq 0}  \Bigg\{\epsilon\rho + \frac{1}{R}\sum_{r=1}^R \sup_{\Q_{\xib}^r\in\mathcal{P}(\calS)}\int_\calS \left[f(\xb,\ab,\xib)-\rho\norms{\xib-\hat{\xib}^r}_1\right] d\Q_{\xib}^r\Bigg\} \nonumber\\
&=\inf_{\rho\geq 0} \Bigg\{ \epsilon \rho + \frac{1}{R} \sum_{r=1}^R \sup_{\xib \in \calS}  \Big\{ f(\xb, \ab, \xib)-\rho \norms{ \xib -\hat{\xib}^r }_1  \Big\} \Bigg\}.
\end{align}
This completes the proof.
\end{proof}

\section{Proof of Proposition~\ref{Prop4M}}\label{Proof_Prop4M}
\begin{proof}
With reference to \eqref{IPFormulation}, we can reformulate $\max_{\xib\in\calS}\{f(\xb,\ab,\xib)-\rho\norms{\xib-\xibhat^r}_1\}$ as
\begin{subequations}\label{Prop4_pf}
\begin{align}
\max_{\bb,\,\xib} &   \sum_{v=1}^{N+2} -a_1 \pi_{1,v} b_{1,v}+\sum_{j=2}^{N+1} \sum_{k=1}^j \sum_{v=j}^{N+2} ( a_{j-1}-a_j) \pi_{j,v} b_{k,v} \nonumber +\sum_{j=2}^{N+1} \sum_{k=1}^j \sum_{v=j}^{N+2}  \sum_{i=1}^{N}  d_ix_{i, j-1}\pi_{j,v} b_{k,v}\nonumber \\
& \ \ +\sum_{v=1}^{N+2}\sum_{i=1}^N  t_{0,i}\pi_{1,v} x_{i,1} b_{1,v} +\sum_{j=2}^{N+1} \sum_{k=1}^j \sum_{v=j}^{N+2} \sum_{i=1}^N \sum_{i'\neq i} t_{i,i'}\pi_{j,v}x_{i,j-1} x_{i', j} b_{k,v} \nonumber \\
& \ \ + \lambda \sum_{i=1}^N t_{0,i}x_{i,1} + \lambda \sum_{i=1}^N t_{i,0} x_{i,N} + \lambda \sum_{j=2}^N \sum_{i=1}^N \sum_{i'\ne i} t_{i,i'}x_{i,j-1}x_{i',j}-\rho  \sum_{j=2}^{N+1}\sum_{i=1}^N |d_i-\hat{d}_i^r|x_{i,j-1}  \nonumber \\
& \ \ -\rho  \sum_{j=2}^{N+1}\sum_{i=1}^N\sum_{i'\neq i}  |t_{i,i'}-\hat{t}_{i,i'}^r|x_{i,j-1}x_{i',j} - \rho  \sum_{i=1}^N |t_{0,i}-\that_{0,i}^r|x_{i,1}  - \rho  \sum_{i=1}^N |t_{i,0}-\that_{i,0}^r|x_{i,N}
\label{Prop4_pf:Obj} \\
\text{s.t.} &  \ \sum_{k=1}^j \sum_{v=j}^{N+2} b_{k,v}=1, \qquad \forall j \in [N+2], \label{Prop4_pf:Con}\\ 
& \  b_{k,v} \in \{0, 1\}, \quad \forall k \in [N], \forall v \in [k, N+2]_\Z, \label{Prop4_pf:Const} \\
& \ \xib\in\calS.
\end{align}
\end{subequations}
As in the proof of Proposition \ref{Prop2M}, we group terms with the same $t_{i,i'}$ and $d_i$ and find an explicit formula for their supremum. 

First, we define the following quantities for $r\in[R]$.
\begin{subequations} \label{WDHRAS_Var}
\begin{align}
u_{i,0}^r &=\max\{\lambda\that^r_{i,0},\lambda\tub_{i,0}-\rho(\tub_{i,0}-\that^r_{i,0})\} \\
u_{0,i,1,v}^r &=\max\{(\pi_{1,v}+\lambda)\tlb_{0,i}-\rho(\that_{0,i}^r-\tlb_{0,i}),(\pi_{1,v}+\lambda)\that_{0,i}^r,(\pi_{1,v}+\lambda)\tub_{0,i}-\rho(\tub_{0,i}-\that^r_{0,i})\}\\
u_{i,i',j,v}^r &=\max\{(\pi_{j,v}+\lambda)\tlb_{i,i'}-\rho(\that_{i,i'}^r-\tlb_{i,i'}),(\pi_{j,v}+\lambda)\that_{i,i'}^r,(\pi_{j,v}+\lambda)\tub_{i,i'}-\rho(\tub_{i,i'}-\that^r_{i,i'})\}\\
\nu_{i,j,v}^r &=\max\{\pi_{j,v}\dlb_{i}-\rho(\dhat_{i}^r-\dlb_{i}),\pi_{j,v}\dhat_{i}^r,\pi_{j,v}\dub_{i}-\rho(\dub_{i}-\dhat^r_{i})\}
\end{align}
\end{subequations}
Note that for the terms with $t_{i,0}$, 
\begin{equation}\label{Prop4_pf:1}
\sup_{t_{i,0}\in[\tlb_{i,0},\tub_{i,0}],\, i\in[N]} \sum_{i=1}^N \left[\lambda t_{i,0}-\rho |t_{i,0}-\that^r_{i,0}| \right] x_{i,N} =\sum_{i=1}^N \sup_{t_{i,0}\in[\tlb_{i,0},\tub_{i,0}]}\left[\lambda t_{i,0}-\rho |t_{i,0}-\that^r_{i,0}| \right] x_{i,N}.
\end{equation}
Since $t_{i,0}-\rho |t_{i,0}-\that^r_{i,0}|$ is a piecewise linear function on the intervals $[\tlb_{i,0},\that^r_{i,0}]$ and $[\that^r_{i,0},\tub_{i,0}]$, and it is strictly increasing on $[\tlb_{i,0},\that^r_{i,0}]$, the maximum is attained either at $\that^r_{i,0}$ or $\tub_{i,0}$. That is, 
\begin{equation}\label{Prop4_pf:2}
\sup_{t_{i,0}\in[\tlb_{i,0},\tub_{i,0}]}\left[\lambda t_{i,0}-\rho |t_{i,0}-\that^r_{i,0}| \right] =u_{i,0}^r.  
\end{equation}
For the terms with $t_{0,i}$, we have 
\begin{align}
&\quad\ \sup_{t_{0,i}\in[\tlb_{0,i},\tub_{0,i}],\, i\in[N]} \sum_{i=1}^N \sum_{v=1}^{N+2}\left[(\pi_{1,v}+\lambda) t_{0,i}-\rho |t_{0,i}-\that^r_{0,i}| \right] b_{i,v} x_{i,1} \nonumber\\
&= \sum_{i=1}^N \sum_{v=1}^{N+2} \sup_{t_{0,i}\in[\tlb_{0,i},\tub_{0,i}]}\left[(\pi_{1,v}+\lambda) t_{0,i}-\rho |t_{0,i}-\that^r_{0,i}| \right] b_{i,v} x_{i,1} \nonumber\\
&= \sum_{i=1}^N \sum_{v=1}^{N+2} u_{0,i,1,v}^r b_{i,v} x_{i,1}, \label{Prop4_pf:3}
\end{align}
where the last equality follows from the fact that $(\pi_{1,v}+\lambda) t_{0,i}-\rho |t_{0,i}-\that^r_{0,i}|$ is a piecewise linear function on the interval $[\tlb_{0,i},\that^r_{0,i}]$ and $[\that^r_{0,i},\tub_{0,i}]$ and the maximum is attained when $t_{0,i}$ takes any one of the values from $\tlb_{0,i}$, $\that^r_{0,i}$ and $\tub_{0,i}$. Using the same argument, we obtain the expressions for the terms with $t_{i,i'}$ and $d_i$ as follows.
\begin{align}
&\quad\ \sup_{t_{i,i'}\in[\tlb_{i,i'},\tub_{i,i'}],i\in[N],i'\ne i}\sum_{i=1}^N \sum_{i'\ne i} \sum_{j=2}^{N+1}\sum_{k=1}^j \sum_{v=j}^{N+2} \left[(\pi_{j,v}+\lambda) t_{i,i'}-\rho|t_{i,i'}-\that^r_{i,i'}|\right] x_{i,j-1} x_{i',j} b_{k,v} \nonumber\\ 
&=\sum_{i=1}^N \sum_{i'\ne i} \sum_{j=2}^{N+1}\sum_{k=1}^j \sum_{v=j}^{N+2} u_{i,i',j,v}^r x_{i,j-1} x_{i',j} b_{k,v}  \label{Prop4_pf:4}\\
&\quad\ \sup_{d_{i}\in[\dlb_{i},\dub_{i}],i\in[N]}\sum_{i=1}^N  \sum_{j=2}^{N+1}\sum_{k=1}^j \sum_{v=j}^{N+2} \left[\pi_{j,v} d_i-\rho|d_{i}-\dhat^r_{i}|\right] x_{i,j-1} b_{k,v} \nonumber\\
&= \sum_{i=1}^N  \sum_{j=2}^{N+1}\sum_{k=1}^j \sum_{v=j}^{N+2} \nu_{i,j,v}^r  x_{i,j-1} b_{k,v}\label{Prop4_pf:5}
\end{align}
With the use of \eqref{Prop4_pf:1} to \eqref{Prop4_pf:5}, we arrive at the following integer program.
\begin{subequations}\label{Prop4_pf_LP}
\begin{align}
\max_{\bb} &   \sum_{v=1}^{N+2} -a_1 \pi_{1,v} b_{1,v}+\sum_{j=2}^{N+1} \sum_{k=1}^j \sum_{v=j}^{N+2} ( a_{j-1}-a_j) \pi_{j,v} b_{k,v} \nonumber +\sum_{j=2}^{N+1} \sum_{k=1}^j \sum_{v=j}^{N+2}  \sum_{i=1}^{N}  \nu_{i,j,v}^r x_{i, j-1} b_{k,v}\nonumber \\
& \ \ +\sum_{v=1}^{N+2}\sum_{i=1}^N  u_{0,i,1,v}^r x_{i,1} b_{i,v} +\sum_{j=2}^{N+1} \sum_{k=1}^j \sum_{v=j}^{N+2} \sum_{i=1}^N \sum_{i'\neq i} u_{i,i',j,v}^r x_{i,j-1} x_{i', j} b_{k,v} + \sum_{i=1}^N u_{i,0}^r x_{i,N} \label{Prop4_pf_LP:Obj} \\
\text{s.t.} &  \ \sum_{k=1}^j \sum_{v=j}^{N+2} b_{k,v}=1, \qquad \forall j \in [N+2], \label{Prop4_pf_LP:Con}\\ 
& \  b_{k,v} \in \{0, 1\}, \quad \forall k \in [N], \forall v \in [k, N+2]_\Z. \label{Prop4_pf_LP:Const}
\end{align}
\end{subequations}

Since the coefficient matrix associated to the constraints \eqref{Prop4_pf_LP:Con} is totally unimodular, we can relax the integral constraints on $b_{k,b}$ by $b_{k,v}\geq 0$. Taking the dual of the resulting LP in \eqref{Prop4_pf_LP} with $\beta^r_j$ being the dual variable associated to the constraint \eqref{Prop4_pf_LP:Con}, we arrive at the following LP.
\allowdisplaybreaks
\begin{subequations}
\begin{align}
 \min_{\betab}  &\  \sum \limits_{j=1}^{N+2} \beta^r_j + \sum_{i=1}^N u_{i,0}^r x_{i,N}  \\
\text{s.t.} & \ \beta^r_1 \geq -a_1  \pi_{1,1} +\sum_{i=1}^N u_{0,i,1,1}^r x_{i,1},  \\
& \ \sum_{j=1}^v \beta^r_j \geq -a_1  \pi_{1,v} + \sum_{j=2}^{\min(v,N+1)} \big ( a_{j-1}-a_j \big) \pi_{j,v} + \sum_{i=1}^N u_{0,i,1,v}^r x_{i,1} \nonumber \\
& \ \qquad \   + \sum_{j=2}^{\min(v,N+1)} \sum_{i=1}^N \sum_{i' \neq i} u_{i,i',j,v}^r x_{i,j-1}x_{i', j}  + \sum_{j=2}^{\min(v,N+1)} \sum_{i=1}^N \nu_{i,j,v}^r x_{i,j-1}, \ \
 \forall v \in [2, N+2]_{\Z},   \\
& \ \sum_{j=k}^v\beta^r_j \geq \sum_{j=k}^{\min(v,N+1)} \big ( a_{j-1}-a_j\big)  \pi_{j,v}+\sum_{j=k}^{\min(v,N+1)} \sum_{i=1}^N \nu_{i,j,v}^r x_{i,j-1}  \nonumber \\
& \ \qquad \    + \sum_{j=k}^{\min(v,N+1)} \sum_{i=1}^N \sum_{i' \neq i} u_{i,i',j,v}x_{i,j-1}x_{i',j},\quad  \forall k\in[2,N+1]_\Z,\,\forall v\in[k,N+2]_\Z,  \\
& \ \beta^r_{N+2} \geq 0  .
\end{align}
\end{subequations}

Finally, we can treat $\ub^r$ and $\nub^r$ as variables and introduce the following constraints.
\allowdisplaybreaks
\begin{subequations}
\begin{align}
& \ u^r_{i,0} \geq \lambda\that^r_{i,0},\quad  u^r_{i,0} \geq \lambda\tub_{i,0}-\rho(\tub_{i,0}-\that^r_{i,0}),\quad\forall i\in[N] \\
& \ u_{0,i,1,v}^r  \geq (\pi_{1,v}+\lambda)\tlb_{0,i}-\rho(\that_{0,i}^r-\tlb_{0,i}),\quad  u_{0,i,1,v}^r  \geq (\pi_{1,v}+\lambda)\that_{0,i}^r,  \nonumber \\
& \ u_{0,i,1,v}^r  \geq (\pi_{1,v}+\lambda)\tub_{0,i}-\rho(\tub_{0,i}-\that^r_{0,i}),\quad \forall i\in[N],\, v\in[N+2] \\
& \ u_{i,i',j,v}^r   \geq (\pi_{j,v}+\lambda)\tlb_{i,i'}-\rho(\that_{i,i'}^r-\tlb_{i,i'}),\quad u_{i,i',j,v}^r   \geq (\pi_{j,v}+\lambda)\tub_{i,i'}-\rho(\tub_{i,i'}-\that^r_{i,i'}), \nonumber  \\
& \ u_{i,i',j,v}^r   \geq (\pi_{j,v}+\lambda)\that_{i,i'}^r,\quad \forall i\in[N],\, i'\in[N]\setminus\{i\},\, j\in[2,N+1]_\Z,\, v\in[j,N+2]_\Z  \\
& \ \nu_{i,j,v}^r    \geq \pi_{j,v}\dlb_{i}-\rho(\dhat_{i}^r-\dlb_{i}),\quad 
 \nu_{i,j,v}^r    \geq \pi_{j,v}\dub_{i}-\rho(\dub_{i}-\dhat^r_{i}), \nonumber  \\
& \ \nu_{i,j,v}^r    \geq \pi_{j,v}\dhat_{i}^r,\quad \forall i\in[N],\, j\in[2,N+1]_\Z,\, v\in[j,N+2]_\Z 
\end{align}
\end{subequations}
This completes the proof.
\end{proof}

\section{Details of the final MINLP and MILP reformulation of W-DHRAS} \label{Appx:Mac_WDHRAS}
Using Proposition \ref{Prop4M}, we can reformulate W-DHRAS as the following MINLP.
\allowdisplaybreaks
\begin{subequations}
\begin{align}
 \min &\ \ \epsilon \rho + \frac{1}{R} \sum_{r=1}^R \left(\sum_{i=1}^N u^r_{i,0} x_{i,N} + \sum_{j=1}^{N+2}\beta_j^r \right)  \\
\text{s.t.} &\ \  \xb\in\calX,\,\ab\in\calA,\, \rho\geq 0,\, \betab^r\in\R^{N+2},\quad\forall r\in[R], \\
& \ \beta^r_1 \geq -a_1  \pi_{1,1} +\sum_{i=1}^N u_{0,i,1,1}^r x_{i,1} , \\
& \ \sum_{j=1}^v \beta^r_j \geq -a_1  \pi_{1,v} + \sum_{j=2}^{\min(v,N+1)} \big ( a_{j-1}-a_j \big) \pi_{j,v} + \sum_{i=1}^N u_{0,i,1,v}^r x_{i,1} \nonumber \\
& \ \qquad \   + \sum_{j=2}^{\min(v,N+1)} \sum_{i=1}^N \sum_{i' \neq i} u_{i,i',j,v}^r x_{i,j-1}x_{i', j}  + \sum_{j=2}^{\min(v,N+1)} \sum_{i=1}^N \nu_{i,j,v}^r x_{i,j-1}, \ \
 \forall v \in [2, N+2]_{\Z},  \\
& \ \sum_{j=k}^v\beta^r_j \geq \sum_{j=k}^{\min(v,N+1)} \big ( a_{j-1}-a_j\big)  \pi_{j,v}+\sum_{j=k}^{\min(v,N+1)} \sum_{i=1}^N \nu_{i,j,v}^r x_{i,j-1}  \nonumber \\
& \ \qquad \   + \sum_{j=k}^{\min(v,N+1)} \sum_{i=1}^N \sum_{i' \neq i} u_{i,i',j,v}x_{i,j-1}x_{i',j},\quad  \forall k\in[2,N+1]_\Z,\,\forall v\in[k,N+2]_\Z ,\\
& \ \beta^r_{N+2} \geq 0 ,\\
& \ u^r_{i,0} \geq \lambda\that^r_{i,0},\quad  u^r_{i,0} \geq \lambda\tub_{i,0}-\rho(\tub_{i,0}-\that^r_{i,0}),\quad\forall i\in[N], \\
& \ u_{0,i,1,v}^r  \geq (\pi_{1,v}+\lambda)\tlb_{0,i}-\rho(\that_{0,i}^r-\tlb_{0,i}),\quad  u_{0,i,1,v}^r  \geq (\pi_{1,v}+\lambda)\that_{0,i}^r,\nonumber\\
& \ u_{0,i,1,v}^r  \geq (\pi_{1,v}+\lambda)\tub_{0,i}-\rho(\tub_{0,i}-\that^r_{0,i}),\quad \forall i\in[N],\, v\in[N+2], \\
& \ u_{i,i',j,v}^r   \geq (\pi_{j,v}+\lambda)\tlb_{i,i'}-\rho(\that_{i,i'}^r-\tlb_{i,i'}),\quad u_{i,i',j,v}^r   \geq (\pi_{j,v}+\lambda)\tub_{i,i'}-\rho(\tub_{i,i'}-\that^r_{i,i'}), \nonumber\\
& \ u_{i,i',j,v}^r   \geq (\pi_{j,v}+\lambda)\that_{i,i'}^r,\quad \forall i\in[N],\, i'\in[N]\setminus\{i\},\, j\in[2,N+1]_\Z,\, v\in[j,N+2]_\Z ,\\
& \ \nu_{i,j,v}^r    \geq \pi_{j,v}\dlb_{i}-\rho(\dhat_{i}^r-\dlb_{i}),\quad 
 \nu_{i,j,v}^r    \geq \pi_{j,v}\dub_{i}-\rho(\dub_{i}-\dhat^r_{i}), \nonumber \\
& \ \nu_{i,j,v}^r    \geq \pi_{j,v}\dhat_{i}^r,\quad \forall i\in[N],\, j\in[2,N+1]_\Z,\, v\in[j,N+2]_\Z. 
\end{align}
\end{subequations}
We can apply McCormick inequalities to linearize the non-linear (such as $u_{i,0}^r x_{i,N}$ in the objective and $u_{i,i',j,v}^r x_{i,j-1} x_{i',j}$ in the constraints). Here, we reuse the Greek letters in M-DHRAS model for the McCormick inequalities. Let $\tau_{i,i',j-1,j}=x_{i,j-1}x_{i',j}$, $\psi^r_{i,0}=u^r_{i,0}x_{i,N}$, $\sigma^r_{0,i,1,v}=u^r_{0,i,1,v} x_{i,1}$, $\phi^r_{i,i',j,v}=u^r_{i,i',j,v}\tau_{i,i',j-1,j}$ and $\zeta^r_{i,j,v}=\nu^r_{i,j,v}x_{i,j-1}$. In addition to \eqref{MILP-MAC1M} and \eqref{MILP-MAC2M}, we introduce the following McCormick inequalities.
\begin{subequations}
\begin{align}\
&  \tau_{i,i',j-1,j} \geq x_{i,j-1}+x_{i',j}-1, \ \tau_{i,i',j-1,j} \geq 0, \label{WMILP-MAC1M}\\
&  \tau_{i,i',j-1,j} \leq x_{i,j-1}, \ \tau_{i,i',j-1,j} \leq x_{i',j} \label{WMILP-MAC2M}\\
& \psi^r_{i,0} \geq \ulb^r_{i,0}x_{i,N}, \ \psi^r_{i,0} \geq u^r_{i,0}+\uub^r_{i,0} (x_{i,N}-1), \ \psi^r_{i,0} \leq \uub^r_{i,0} x_{i,N}, \ \psi^r_{i,0} \leq u^r_{i,0} + \ulb^r_{0,i} (x_{i,N}-1) \label{WMILP-MAC3M}\\
&  \phi^r_{i,i',j,v}\geq \ulb^r_{i,i',j,v} \tau_{i,i',j-1,j}, \ \phi^r_{i,i',j,v} \geq u^r_{i,i',j,v}+\uub^r_{i,i',j,v}(\tau_{i,i',j-1,j}-1)\label{WMILP-MAC4M}\\
&  \phi^r_{i,i',j,v} \leq \uub^r_{i,i',j,v}\tau_{i,i',j-1,j}, \ \phi^r_{i,i',j,v}\leq u^r_{i,i',j,v} + \ulb^r_{i,i',j,v}(\tau_{i,i',j-1,j}-1) \label{WMILP-MAC5M} \\
&  \sigma^r_{0,i,1,v} \geq \ulb^r_{0,i,1,v}x_{i,1} ,\ \sigma^r_{0,i,1,v}  \geq u^r_{0,i,1,v}+\uub^r_{0,i,1,v}(x_{i,1}-1) \label{WMILP-MAC6M} \\
&  \sigma^r_{0,i,1,v} \leq \uub^r_{0,i,1,v}x_{i,1}, \ \sigma^r_{0,i,1,v}\leq u^r_{0,i,1,v} + \ulb^r_{0,i,1,v}(x_{i,1}-1) \label{WMILP-MAC7M} \\
&  \zeta^r_{i,j,v} \geq \nulb^r_{i,j,v}x_{i,j-1} ,\ \zeta^r_{i,j,v}  \geq \nu^r_{i,j,v} + \nuub^r_{i,j,v}(x_{i,j-1}-1) \label{WMILP-MAC8M} \\
&  \zeta^r_{i,j,v} \leq \nuub^r_{i,j,v}x_{i,j-1}, \ \zeta^r_{i,j,v}  \leq \nu^r_{i,j,v} + \nulb^r_{i,j,v}(x_{i,j-1}-1) \label{WMILP-MAC9M} 
\end{align}
\end{subequations}
Therefore, we obtain an MILP reformulation of the W-DHRAS model.
\allowdisplaybreaks
\begin{subequations}
\begin{align}
 \min &\ \ \epsilon \rho + \frac{1}{R} \sum_{r=1}^R \left(\sum_{i=1}^N \psi^r_{i,0} + \sum_{j=1}^{N+2}\beta_j^r \right)  \\
\text{s.t.} &\ \  \xb\in\calX,\,\ab\in\calA,\, \rho\geq 0,\, \betab^r\in\R^{N+2},\quad\forall r\in[R], \\
& \ \text{constraints } \eqref{WMILP-MAC1M}-\eqref{WMILP-MAC9M},\quad\forall r\in[R],\\
& \ \beta^r_1 \geq -a_1  \pi_{1,1} +\sum_{i=1}^N \sigma_{0,i,1,1}^r, \label{W-DHRAS_MILP_Con2} \\
& \ \sum_{j=1}^v \beta^r_j \geq -a_1  \pi_{1,v} + \sum_{j=2}^{\min(v,N+1)} \big ( a_{j-1}-a_j \big) \pi_{j,v} + \sum_{i=1}^N \sigma_{0,i,1,v}^r \nonumber \\
& \ \qquad \   + \sum_{j=2}^{\min(v,N+1)} \sum_{i=1}^N \sum_{i' \neq i} \phi_{i,i',j,v}^r  + \sum_{j=2}^{\min(v,N+1)} \sum_{i=1}^N \zeta_{i,j,v}^r, \ \
 \forall v \in [2, N+2]_{\Z}, \label{W-DHRAS_MILP_Con3}  \\
& \ \sum_{j=k}^v\beta^r_j \geq \sum_{j=k}^{\min(v,N+1)} \big ( a_{j-1}-a_j\big)  \pi_{j,v}+\sum_{j=k}^{\min(v,N+1)} \sum_{i=1}^N \zeta_{i,j,v}^r  \nonumber \\
& \ \qquad \   + \sum_{j=k}^{\min(v,N+1)} \sum_{i=1}^N \sum_{i' \neq i} \phi_{i,i',j,v},\quad  \forall k\in[2,N+1]_\Z,\,\forall v\in[k,N+2]_\Z , \label{W-DHRAS_MILP_Con4} \\
& \ \beta^r_{N+2} \geq 0 , \label{W-DHRAS_MILP_Con5} \\
& \ u^r_{i,0} \geq \lambda\that^r_{i,0},\quad  u^r_{i,0} \geq \lambda\tub_{i,0}-\rho(\tub_{i,0}-\that^r_{i,0}),\quad\forall i\in[N], \\
& \ u_{0,i,1,v}^r  \geq (\pi_{1,v}+\lambda)\tlb_{0,i}-\rho(\that_{0,i}^r-\tlb_{0,i}),\quad  u_{0,i,1,v}^r  \geq (\pi_{1,v}+\lambda)\that_{0,i}^r,\nonumber\\
& \ u_{0,i,1,v}^r  \geq (\pi_{1,v}+\lambda)\tub_{0,i}-\rho(\tub_{0,i}-\that^r_{0,i}),\quad \forall i\in[N],\, v\in[N+2], \\
& \ u_{i,i',j,v}^r   \geq (\pi_{j,v}+\lambda)\tlb_{i,i'}-\rho(\that_{i,i'}^r-\tlb_{i,i'}),\quad u_{i,i',j,v}^r   \geq (\pi_{j,v}+\lambda)\tub_{i,i'}-\rho(\tub_{i,i'}-\that^r_{i,i'}), \nonumber\\
& \ u_{i,i',j,v}^r   \geq (\pi_{j,v}+\lambda)\that_{i,i'}^r,\quad \forall i\in[N],\, i'\in[N]\setminus\{i\},\, j\in[2,N+1]_\Z,\, v\in[j,N+2]_\Z ,\\
& \ \nu_{i,j,v}^r    \geq \pi_{j,v}\dlb_{i}-\rho(\dhat_{i}^r-\dlb_{i}),\quad 
 \nu_{i,j,v}^r    \geq \pi_{j,v}\dub_{i}-\rho(\dub_{i}-\dhat^r_{i}), \nonumber \\
& \ \nu_{i,j,v}^r    \geq \pi_{j,v}\dhat_{i}^r,\quad \forall i\in[N],\, j\in[2,N+1]_\Z,\, v\in[j,N+2]_\Z.
\end{align}
\end{subequations}

Finally, we give tight big-M coefficients in the McCormick inequalities. From the proof of Proposition \ref{Prop4M} in \ref{Proof_Prop4M}, we have the explicit formula for the quantities $\ub^r$ and $\nub^r$ in \eqref{WDHRAS_Var}. Immediately, the lower bounds are $\ulb^r_{i,0}=\lambda\that^r_{i,0}$, $\ulb^r_{0,i,1,v}=(\pi_{1,v}+\lambda)\that^r_{0,i}$, $\ulb^r_{i,i',j,v}=(\pi_{j,v}+\lambda)\that^r_{i,i'}$ and $\nulb^r_{i,j,v}=\pi_{j,v}\dhat^r_{i}$. Since the expressions in \eqref{WDHRAS_Var} are non-increasing in $\rho$ and we have $\rho\geq 0$, the upper bounds are simply $\uub^r_{i,0}=\lambda\tub_{i,0}$, $\uub^r_{0,i,1,v}=\max\{(\pi_{1,v}+\lambda)\tlb_{0,i},(\pi_{1,v}+\lambda)\tub_{0,i}\}$, $\uub^r_{i,i',j,v}=\max\{(\pi_{j,v}+\lambda)\tlb_{i,i'},(\pi_{j,v}+\lambda)\tub_{i,i'}\}$ and $\nuub^r_{i,j,v}=\max\{\pi_{j,v}\dlb_{i},\pi_{j,v}\dub_{i}\}$.

\section{Sample averaging approximation approach} \label{apdx: SAA}
In the numerical experiments, we compare DRO models with the sample averaging approximation (SAA) approach. Suppose we have a set of $R$ scenarios $\{\xibhat^1,\dots,\xibhat^R\}$. The SAA approach is to solve the original problem by replacing the true distribution with the empirical distribution $\widehat{\Prob}_{\xib}^{R}$. That is, 
\begin{subequations}\label{model:SAA}
\begin{align}
& \min_{\xb,\ab,\ub,\wb} \  \frac{1}{R} \sum_{r=1}^R \sum \limits_{j=1}^N \left[(\cw_j w^r_j+\cu_j u^r_j \big) +\co w^r_{N+1} + \lambda A^r\right] \nonumber\\
& \ \ \ \text{s.t.}  \ \ \xb\in\calX,\quad \ab\in\calA, \nonumber \\  
&  \ \ \ \ \ \ \ \ \ w^r_1-u^r_1= \sum \limits_{i=1}^N t^r_{0,i}x_{i,1}-a_1, \nonumber\\
&   \ \ \ \ \ \ \ \ \ w^r_j-w^r_{j-1}-u^r_j=a_{j-1}-a_j+\sum_{i=1}^N d^r_ix_{i,j-1}+ \sum_{i=1}^N \sum_{i'\neq i} t^r_{i,i'} x_{i,j-1}x_{i,j}, \ \forall j\in [2, N], \nonumber\\
&  \ \ \ \ \ \ \ \ \   w^r_{N+1}-w^r_N-u^r_{N+1}= a_N-a_{N+1}+\sum_{i=1}^N d^r_ix_{i,N}, \nonumber\\
&  \ \ \ \ \ \ \ \ \ A^r= \sum \limits_{j=2}^N   \sum_{i=1}^N \sum_{i'\neq i} t^r_{i,i'} x_{i,j-1}x_{i',j}+\sum_{i=1}^N(t^r_{0,i}x_{i,1}+t^r_{i,0}x_{i,N}), \nonumber \\
&  \ \ \ \ \ \ \ \ \ (w^r_j,u^r_j)\geq 0, \ \ \forall j \in [N+1]. \nonumber
\end{align}
\end{subequations}

\section{Symmetry-breaking constraints} \label{apdx: SymBreak}
To enhance the tractability of the models, we introduce the symmetry-breaking constraints. The idea behind these constraints follows from prior appointment scheduling observations that customers of the same type (i.e., requesting the same service) have the same service time distribution and the observation from \cite{nikzad2021matheuristic} that customers within a service region form a basic unit or cluster that shares the same travel time distribution. In this case, the route within the same group does not matter since the service and travel time distributions are the same. This means, with the presence of homogeneous groups, we could be able to eliminate some of the equialent routes and hopefully, improve the model's tractability. In particular, we focus on the presence of one homogeneous group, which is common when services are provided within a service region.

Suppose that except the depot (node $0$), the $N$ customers form a homogeneous group. That is, $d_i$ are distributionally the same over $i\in[N]$ and $t_{i,i'}$ are distributionally the same over $i\in[N]$ and $i'\in[N]\setminus\{i\}$. In this case, we argue that the service provider only needs to make decision on the nodes departing from and entering to the depot. We summarize this result formally in the following lemma.

\begin{lem} \label{lem:symBreak1}
Assume that there is only one homogeneous group. For a fixed route $(i_1,\dots,i_N)$ denoted as $\xb$, i.e. $x_{i_j,j}=1$ for $j\in[N]$, define $\xb^\Pi$ by another route with $x_{\Pi(i_j),j}$, where $(\Pi(i_j))_{j\in[N]}$ is a permutation of $[N]$ with $\Pi(i_1)=i_1$ and $\Pi(i_N)=i_N$. Then, $f(\xb,\ab,\xib)$ is distributionally the same as $f(\xb^\Pi,\ab,\xib)$ for any $\ab\in\calA$.
\end{lem}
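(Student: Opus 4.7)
The plan is to prove the lemma by showing that the random vector of second-stage inputs appearing in $f(\xb,\ab,\xib)$ has the same joint distribution as the corresponding vector appearing in $f(\xb^\Pi,\ab,\xib)$, from which equality in distribution of the resulting linear-programming cost functions will follow immediately since the map $\xib \mapsto f(\xb,\ab,\xib)$ in \eqref{2ndStage} is a measurable function of these inputs alone.

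First, I would unpack the cost exactly as in \eqref{2ndStage}. For the route $(i_1,\dots,i_N)$, the service-time inputs are $(d_{i_1},\dots,d_{i_N})$, the depot-related travel times are $t_{0,i_1}$ and $t_{i_N,0}$, and the intra-route travel times are $t_{i_{j-1},i_j}$ for $j\in[2,N]_\Z$. For the permuted route, $\xb^\Pi$ selects $(d_{\Pi(i_1)},\dots,d_{\Pi(i_N)})$, the depot-related travel times $t_{0,\Pi(i_1)}$ and $t_{\Pi(i_N),0}$, and intra-route travel times $t_{\Pi(i_{j-1}),\Pi(i_j)}$. Because $\Pi(i_1)=i_1$ and $\Pi(i_N)=i_N$, the depot-adjacent travel terms coincide \emph{exactly} (not just in distribution) with those of $\xb$. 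The only terms that change are the service times at interior positions and the interior travel times between consecutive customers.

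Next I would invoke the homogeneity assumption: for $i,i'\in[N]$ all service times $d_i$ share a common marginal distribution and all pairwise travel times $t_{i,i'}$ (with $i\neq i'$, both indices in $[N]$) share a common marginal distribution. Under the standing independence of the components of $\xib$, this makes $(d_1,\dots,d_N)$ exchangeable and the family $\{t_{i,i'}\}_{i\neq i',\, i,i'\in[N]}$ jointly invariant under relabeling of $[N]$. Hence the joint law of
\begin{equation*}
\bigl(d_{i_1},\dots,d_{i_N},\ t_{0,i_1},\ t_{i_N,0},\ \{t_{i_{j-1},i_j}\}_{j=2}^{N}\bigr)
\end{equation*}
equals the joint law of
\begin{equation*}
\bigl(d_{\Pi(i_1)},\dots,d_{\Pi(i_N)},\ t_{0,\Pi(i_1)},\ t_{\Pi(i_N),0},\ \{t_{\Pi(i_{j-1}),\Pi(i_j)}\}_{j=2}^{N}\bigr),
\end{equation*}
since $\Pi$ restricts to a bijection on $[N]$ that fixes the endpoints, so the depot-adjacent entries are identical and the remaining entries are a relabeling of an exchangeable family.

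Finally, for fixed $(\xb,\ab)$ the linear program defining $f(\xb,\ab,\xib)$ in \eqref{2ndStage} depends on $\xib$ only through the vector listed above (this is immediate from \eqref{2nd:C1}--\eqref{2nd:C5}, where all $t$-coefficients are switched on exactly at the consecutive-position and depot-adjacent pairs induced by $\xb$, and all $d$-coefficients are switched on exactly at the scheduled positions). Since $f$ is a measurable (indeed continuous, by the boundedness argument already used in the proof of Theorem 1) function of these inputs, the equality of joint distributions transfers to $f(\xb,\ab,\xib)\stackrel{d}{=}f(\xb^\Pi,\ab,\xib)$ for every $\ab\in\calA$. The main (and only mild) subtlety is verifying that no interior travel term of the form $t_{0,\cdot}$ or $t_{\cdot,0}$ enters under $\xb^\Pi$, which is guaranteed precisely by the constraint that $\Pi$ fixes $i_1$ and $i_N$; no homogeneity between depot-adjacent and interior travel times is needed.
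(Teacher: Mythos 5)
Your proof is correct and reaches the paper's conclusion by a genuinely different route. The paper argues by induction on the service position $j$, using the recursion $w_j=\max\{w_{j-1}+a_{j-1}-a_j+\sum_i d_i x_{i,j-1}+\sum_i\sum_{i'\ne i}t_{i,i'}x_{i,j-1}x_{i',j},\,0\}$ to show that $(w_j,u_j)$ and $(w_j^\Pi,u_j^\Pi)$ agree in distribution position by position, with the base case supplied by $\Pi(i_1)=i_1$. You instead identify the full vector of entries of $\xib$ that $f(\xb,\ab,\cdot)$ actually reads off, show this vector has the same joint law under $\xb$ and $\xb^\Pi$ (the depot-adjacent entries being literally the same random variables because $\Pi$ fixes $i_1$ and $i_N$), and push equality of joint laws through the measurable map $\xib\mapsto f(\xb,\ab,\xib)$. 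Your version is arguably tighter: the paper's inductive step implicitly needs the \emph{joint} law of the pair (previous waiting time, current increment) to match across the two routes, not merely the two marginals, and your one-shot joint-distribution statement is exactly what makes that step legitimate. The one point worth flagging is that you invoke independence of the components of $\xib$; the lemma's hypothesis (equality of marginals within the homogeneous group) does not state this and the paper never makes it explicit, yet some such joint assumption (independence, or exchangeability of the service-time family and relabeling-invariance of the travel-time family) is genuinely needed for the claim to hold. Your proof therefore surfaces a hypothesis that the paper's proof also relies on silently, which is a feature rather than a defect.
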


\begin{proof}
Recall that $f(\xb,\ab,\xib)$ is the second-stage cost, which is a function of the idling time $u_i$ for $i\in[N]$, the waiting time and overtime $w_i$ for $i\in[N+1]$. Define the same quantities $u^\Pi_i$ and $w^\Pi_i$ for the decision $\xb^\Pi$. We will show, by induction, that $w_j$ and $u_j$ are distributionally the same as $w^\Pi_j$ and $u^\Pi_j$ respectively. Note that $w_1$ and $u_1$ are distributionally the same as $w^\Pi_1$ and $u^\Pi_1$ respectively since by assumption, the first visiting customer is the same. Next, assume that $w_{j-1}$ and $u_{j-1}$ are distributionally the same as $w^\Pi_{j-1}$ and $u^\Pi_{j-1}$ respectively. Note that 
$$w_j=\max\left\{w_{j-1}+a_{j-1}-a_{j}+\sum_{i=1}^N d_ix_{i,j-1}+\sum_{i=1}^N\sum_{i'\ne i} t_{i,i'}x_{i,j-1}x_{i',j}, 0\right\}.$$
By induction assumption and the assumption that the distributions of travel and service times are the same, $w_j$ equals $w^\Pi_j$ in distribution. It is easy to see that, with the same argument, $u_j$ equals $u^\Pi_j$ in distribution. This completes the proof.
\end{proof}

From Lemma \ref{lem:symBreak1}, we have $\E[f(\xb,\ab,\xib)]=\E[f(\xb^\Pi,\ab,\xib)]$ for any permutation with fixed entering and departing node. As an illustration of the symmetry, consider that we have a homogeneous group of $6$ customers. Then, the routes $(2,1,3,4,6,5)$ and $(2,4,3,6,1,5)$ give the same expected cost. We can impose the lexicographic order of the route for the middle customers (not the first and the last customer), i.e., the service provider will first visit customers with a smaller index $i$. In this example, we only consider the route $(2,1,3,4,6,5)$ and eliminate any other possible permutations of the middle customers. Hence, we can introduce the following symmetry-breaking constraints.
\begin{subequations}
\begin{align}
& x_{1,j} \geq x_{1,j+1},\quad\forall j\in[2,N-2]_\Z \label{eqn:SBC1-1}\\
& x_{i,j} \leq \sum_{l=1}^{i-1} x_{l,j-1},\quad\forall i\in[2,N]_\Z,\, j\in[3,N-1]_\Z\label{eqn:SBC1-2}
\end{align}
\end{subequations}
If customer $1$ is neither the first nor the last customer, constraints \eqref{eqn:SBC1-1} enforces customer $1$ to be the second customer (see example matrix $X^1$, where each entry represents $x_{i,j}$). Otherwise, the constraint is satisfied since $x_{1,j}=0$ for all $j\in[2,N-1]_\Z$ (see $X^2$ and $X^3$). Constraints \eqref{eqn:SBC1-2} enforces the ordering of the third to the $(N-1)$-th customer. If customer $i$ is the $j$-th customer, i.e. $x_{i,j}=1$ for some $j\in[3,N-1]_\Z$, we require that a customer with index less than $i$ must be served at position $j-1$ (see $X^1$, $X^2$ and $X^3$). By imposing these constraints,  the solver solely determines the entering and departing node without optimizing any permutations for the middle customers.

$$X^1 =\begin{pmatrix} 0 & 1 & 0 & 0 & 0 & 0\\
                       1 & 0 & 0 & 0 & 0 & 0\\
                       0 & 0 & 1 & 0 & 0 & 0\\
                       0 & 0 & 0 & 0 & 0 & 1\\
                       0 & 0 & 0 & 1 & 0 & 0\\
                       0 & 0 & 0 & 0 & 1 & 0
                       \end{pmatrix} \quad 
X^2 =\begin{pmatrix}   1 & 0 & 0 & 0 & 0 & 0\\
                       0 & 1 & 0 & 0 & 0 & 0\\
                       0 & 0 & 1 & 0 & 0 & 0\\
                       0 & 0 & 0 & 1 & 0 & 0\\
                       0 & 0 & 0 & 0 & 0 & 1\\
                       0 & 0 & 0 & 0 & 1 & 0
                       \end{pmatrix} \quad 
X^3 =\begin{pmatrix}   1 & 0 & 0 & 0 & 0 & 0\\
                       0 & 0 & 0 & 0 & 0 & 1\\
                       0 & 1 & 0 & 0 & 0 & 0\\
                       0 & 0 & 1 & 0 & 0 & 0\\
                       0 & 0 & 0 & 1 & 0 & 0\\
                       0 & 0 & 0 & 0 & 1 & 0
                       \end{pmatrix}                        
                       $$

\section{Additional results on appointment time structure} \label{apdx:AT_Structure}
In this section, we provide additional results for the appointment time structure with the two other choices of $\lambda$, namely $0.5$ and $1$. Figures \ref{fig:AppointmentTime1_lam0.5} and \ref{fig:AppointmentTime1_lam1} show the inter-arrival times for the cost structure $(\cw_j,\cu_j,\co)=(2,1,20)$ while Figures \ref{fig:AppointmentTime2_lam0.5} and \ref{fig:AppointmentTime2_lam1} show the results for the cost structure $(\cw_j,\cu_j,\co)=(5,1,7.5)$. We observe similar patterns for the three choices of $\lambda$.

\begin{figure}[t]
    \hspace{-15mm}
    \includegraphics[scale=0.67]{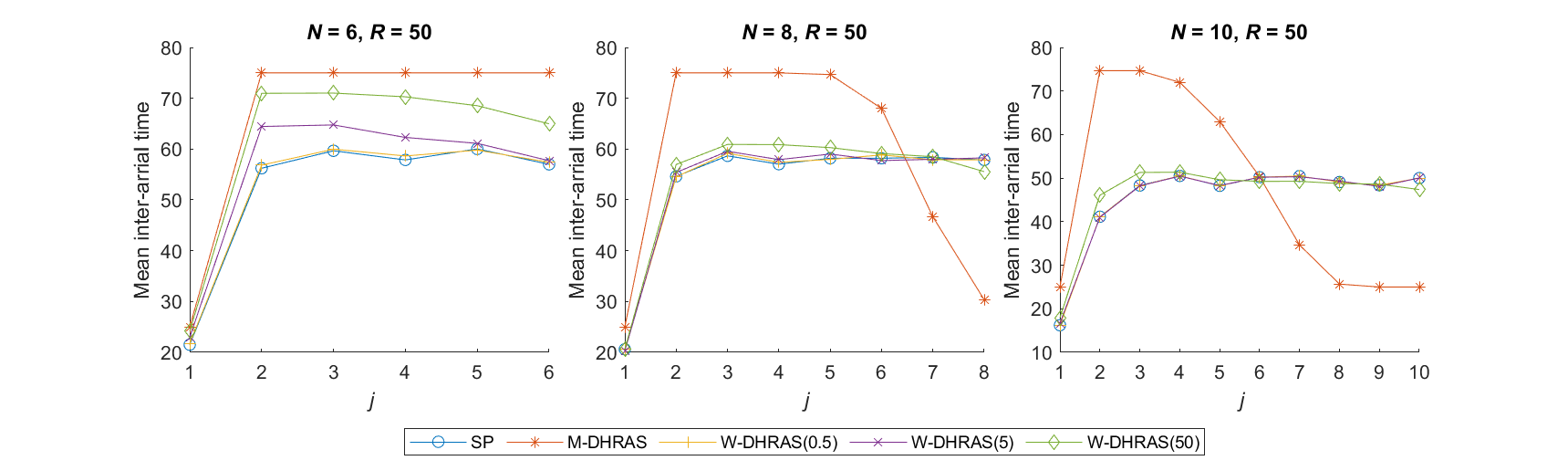}
    \caption{Mean inter-arrival times $a_j-a_{j-1}$ with $R=50$ under $(\cw_j,\cu_j,\co)=(2,1,20)$ and $\lambda=0.5$}
    \label{fig:AppointmentTime1_lam0.5}
\end{figure}
\begin{figure}[p]
    \hspace{-15mm}
    \includegraphics[scale=0.67]{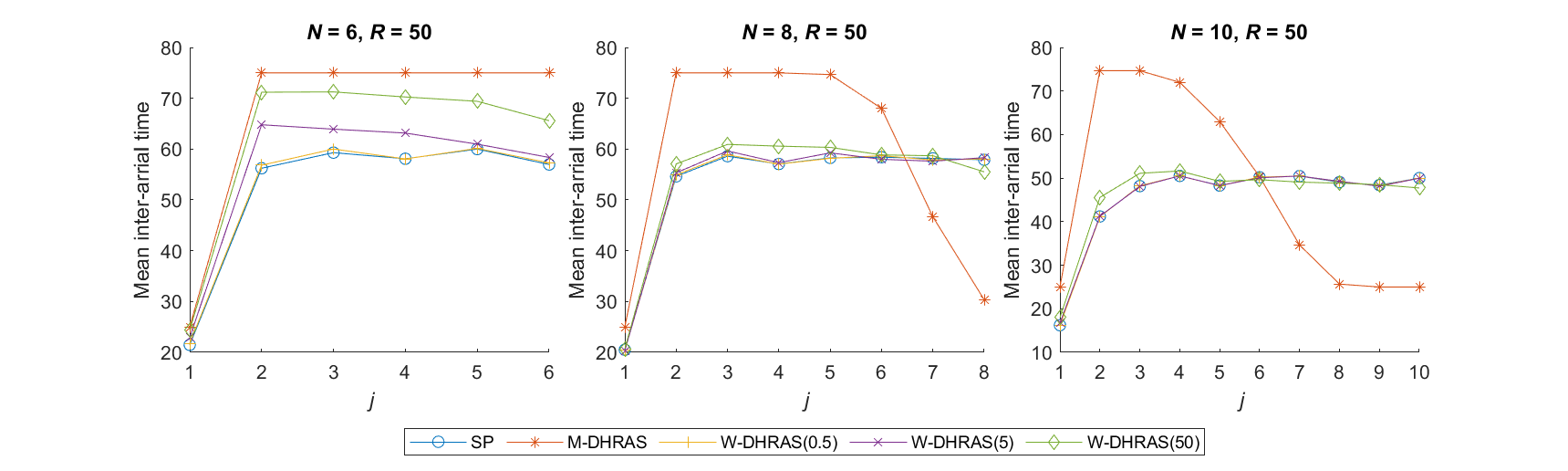}
    \caption{Mean inter-arrival times $a_j-a_{j-1}$ with $R=50$ under $(\cw_j,\cu_j,\co)=(2,1,20)$ and $\lambda=1$}
    \label{fig:AppointmentTime1_lam1}
\end{figure}
\begin{figure}[p]
    \hspace{-15mm}
    \includegraphics[scale=0.67]{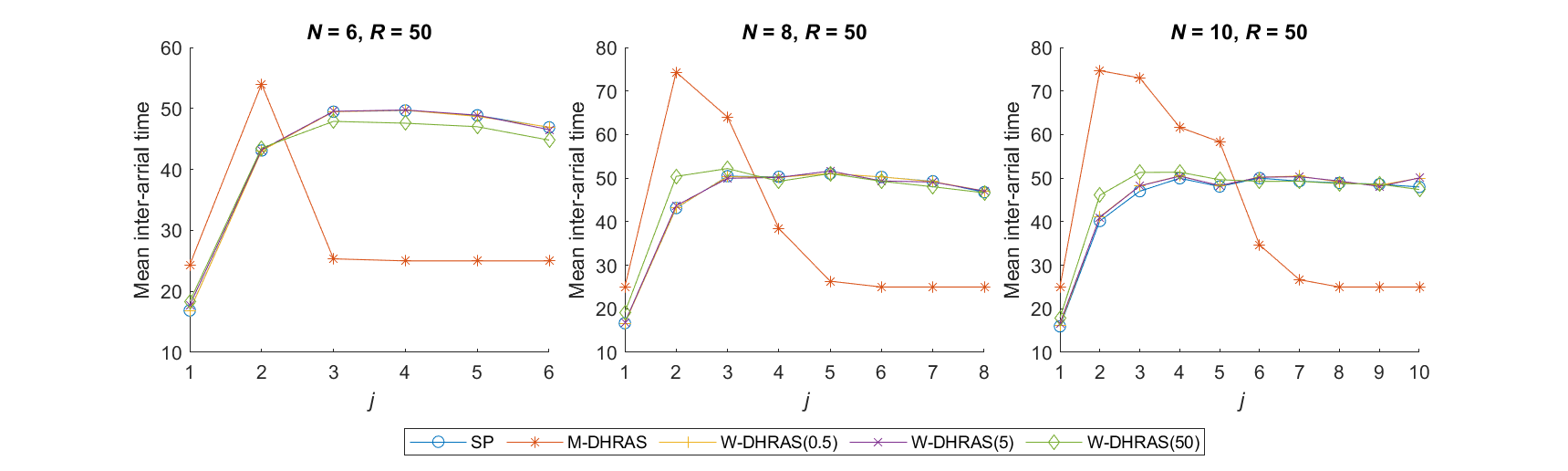}
    \caption{Mean inter-arrival  $a_j-a_{j-1}$ with $R=50$ under $(\cw_j,\cu_j,\co)=(1,5,7.5)$ and $\lambda=0.5$}
    \label{fig:AppointmentTime2_lam0.5}
\end{figure}
\begin{figure}[p]
    \hspace{-15mm}
    \includegraphics[scale=0.67]{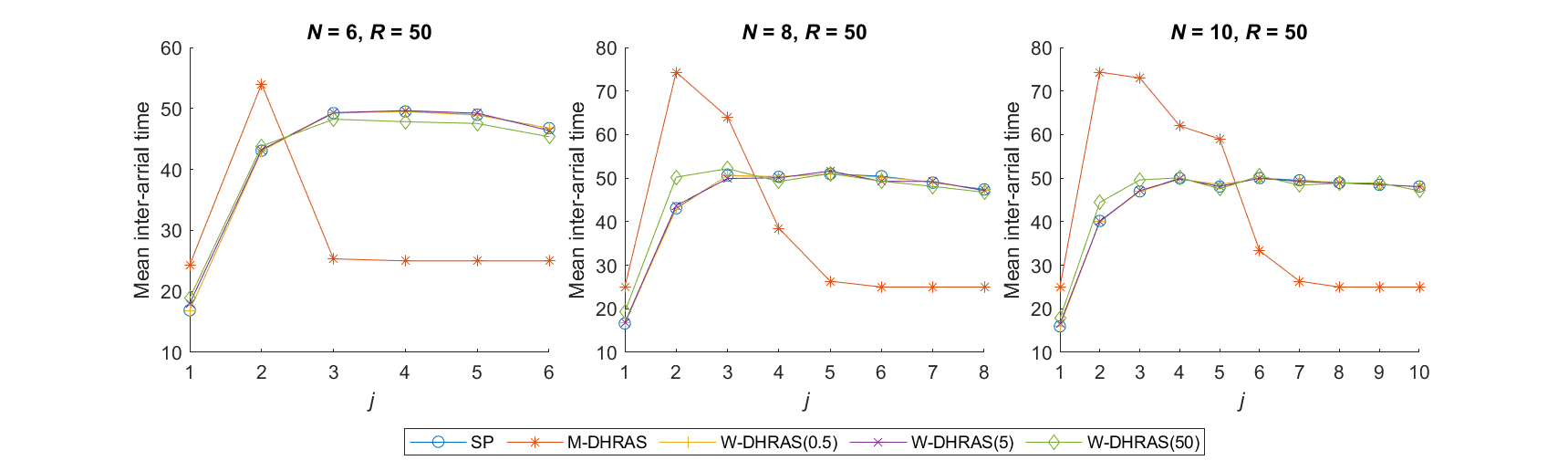}
    \caption{Mean inter-arrival times $a_j-a_{j-1}$ with $R=50$ under $(\cw_j,\cu_j,\co)=(1,5,7.5)$ and $\lambda=1$}
    \label{fig:AppointmentTime2_lam1}
\end{figure}

\section{Additional results on out-of-sample performance} \label{apdx:OutSample}

We provide additional out-of-sample performance results in this appendix. All the settings are the same as described in Section \ref{sec4:Exp} and we only present the results for $\lambda=2$ since similar patterns could be observed for other two choices of $\lambda$. Besides the distribution we used in Section \ref{sec4:Exp}, we consider two additional distributions. The following sets summarize all our testing instances.

\begin{enumerate}
    \item [Set 1.] We assume perfect information for the distributions. That is, we generate the $10,000$ samples from the same distribution we use in the optimization as discussed in Section \ref{subsection:Expt_Setup}. This simulation assumes that the data comes from the true unknown distribution.

    \item [Set 2.] In this set, we assume that we have misspecified the distribution of the travel time in the optimization. We generate $t_{i,i'}$ from $U[25,35]$ instead of $U[15,25]$. That is, the average travel time takes 10 minutes longer than usual. This situation (shift in the travel time range) might be seen in practice due to unexpected traffic congestion (e.g., caused by traffic accidents, weather conditions etc.). 

    \item [Set 3.]In this set, we assume that we have misspecified both the service and travel times distributions in the optimization. Specifically, we follow a similar out-of-sample simulation testing procedure described in \cite{wang2020distributionally} and perturb the support of the random travel and service times by a parameter $\delta$ as $[(1-\delta)$ lower bound, $(1+\delta)$ upper bound], where $\delta\in\{0.1,0.25,0.5\}$. A higher value of $\delta$ corresponds to a higher variation level.
    
    \item [Set 4.] In this set, we assume that we have misspecified the service time distribution in the optimization (i.e., lognormal is not the true distribution).  We simulate $d_i$ from the U-shaped beta distribution $B(0.5,0.5)$ on $[10,50]$ (\citealp{jiang2019data}), which has the same mean and range as the in-sample distribution.

    \item [Set 5.] We generate data similar to Set 3 but only changing the service time. 
\end{enumerate}

We show the results of the $9$ out-of-sample testing sets in each figure in the following order: (from row 1 to row 3, left to right) Set 1, Set 2, Set 3 with $\delta\in\{0.1,0.25,0.5\}$, Set 4 and Set 5 with $\delta\in\{0.1,0.25,0.5\}$. Figures \ref{fig:OC_8} and \ref{fig:OC_10} show the corresponding performance for $N=8$ and $N=10$ under cost structure (a). We only present the choice of $\epsilon\in\{0.05,0.5,5,50\}$ such that W-DHRAS gives the best overall performance. In this case, we choose $\epsilon=0.05$ and $\epsilon=50$ for $N=8$ and $N=10$ respectively.

Similar patterns could be observed as in the case with $N=6$. First, M-DHRAS model is the most conservative model, which yields the highest out-of-sample cost for almost all the cases. Second, we observe that W-DHRAS shares a similar out-of-sample performance as SP (mainly due to a small choice of $\epsilon$) for $N=8$ while for $N=10$, the W-DHRAS model consistently produces a lower out-of-sample cost than the SP model. Indeed, for $N=10$, the operator has a very tight schedule. A small deviation from the in-sample distribution (from the empirical data) results in a significant amount of overtime. Therefore, W-DHRAS could perform better with its ability to hedge against unfavorable scenarios. 

Figures \ref{fig:OC2_6} to \ref{fig:OC2_10} show the results under cost structure (b) with $\lambda=2$ for the three choices of $N$. We only present the W-DHRAS model with the best performing $\epsilon$, which are $0.05$, $0.5$ and $50$ respectively. Similar patterns are observed as in cost structure (a). First, M-DHRAS gives the largest out-of-sample costs for almost all the cases. Second, we observe that W-DHRAS and SP have similar performance with $N=6$ and $N=8$, though the performance of W-DHRAS is slightly better than SP when the sample size is small. Finally, for $N=10$, W-DHRAS consistently performs the best among the three models. 

\begin{figure}[p]
    \hspace{-15mm}
    \includegraphics[scale=0.67]{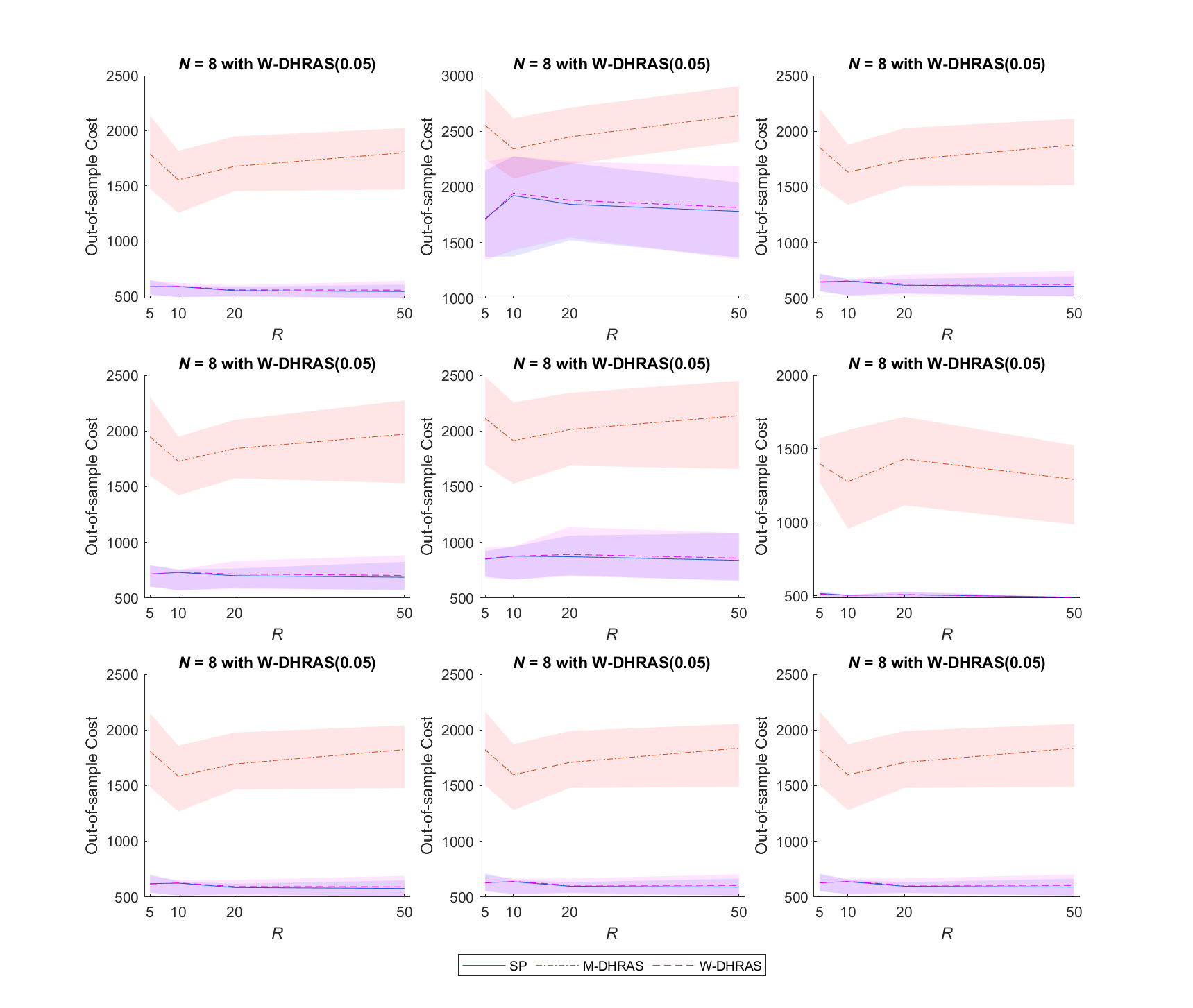}
    \caption{Out-of-sample performance for $N=8$ with cost structure (a) and $\lambda=2$}
    \label{fig:OC_8}
\end{figure}
\begin{figure}[p]
    \hspace{-15mm}
    \includegraphics[scale=0.67]{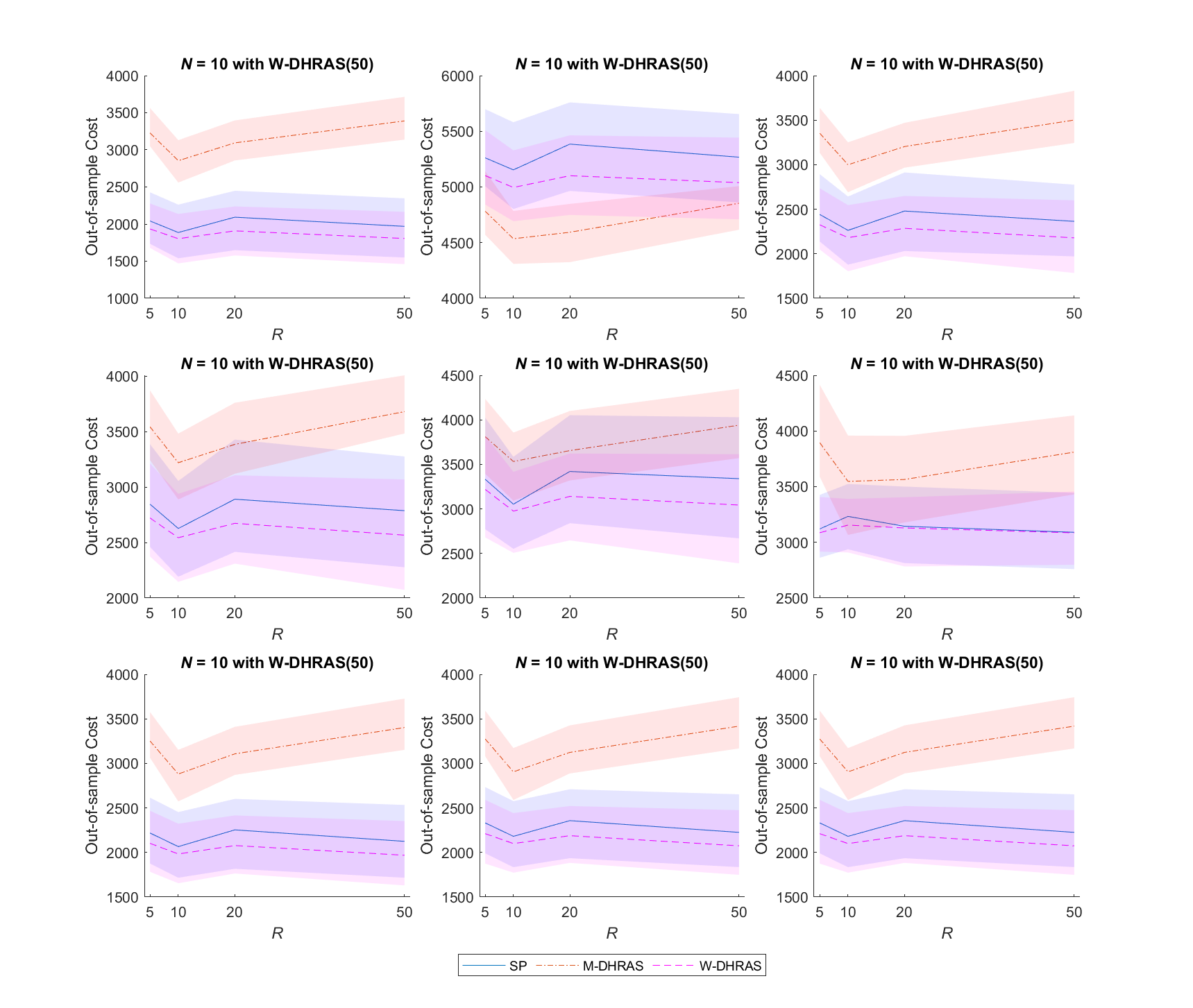}
    \caption{Out-of-sample performance for $N=10$ with cost structure (a) and $\lambda=2$}
    \label{fig:OC_10}
\end{figure}
\begin{figure}
    \hspace{-15mm}
    \includegraphics[scale=0.67]{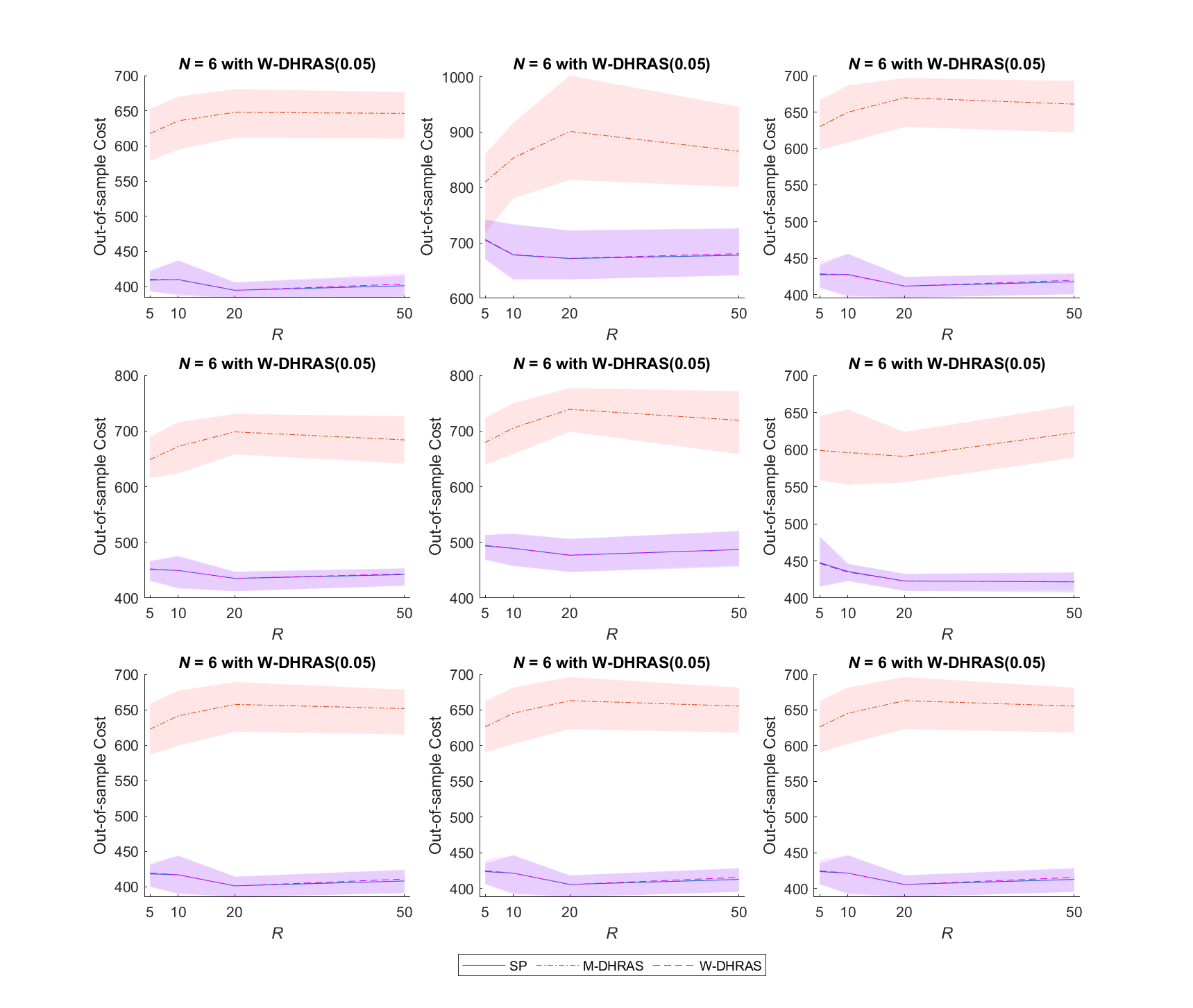}
    \caption{Out-of-sample performance for $N=6$ with cost structure (b) and $\lambda=2$}
    \label{fig:OC2_6}
\end{figure}
\begin{figure}
    \hspace{-15mm}
    \includegraphics[scale=0.67]{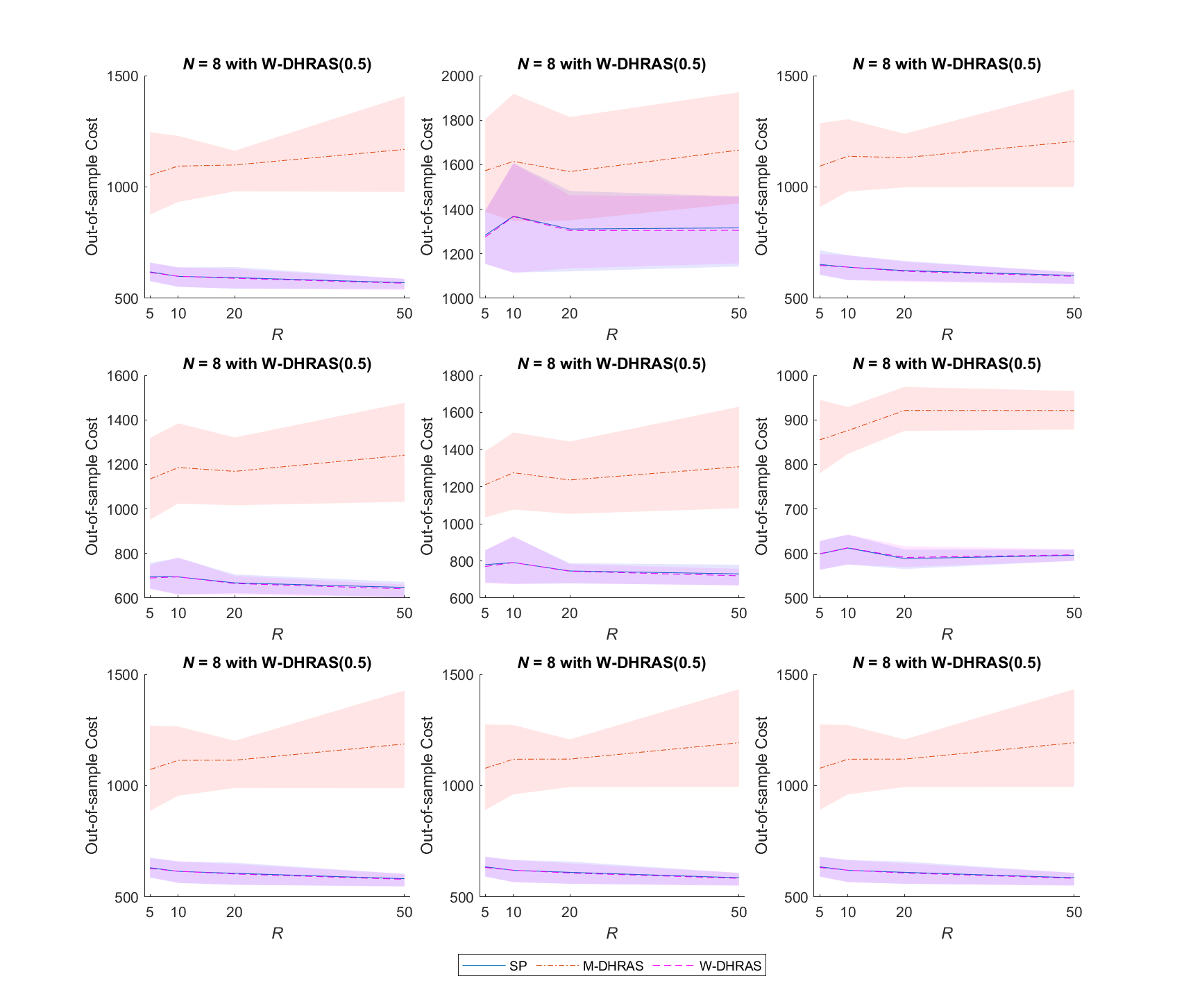}
    \caption{Out-of-sample performance for $N=8$ with cost structure (b) and $\lambda=2$}
    \label{fig:OC2_8}
\end{figure}
\begin{figure}
    \hspace{-15mm}
    \includegraphics[scale=0.67]{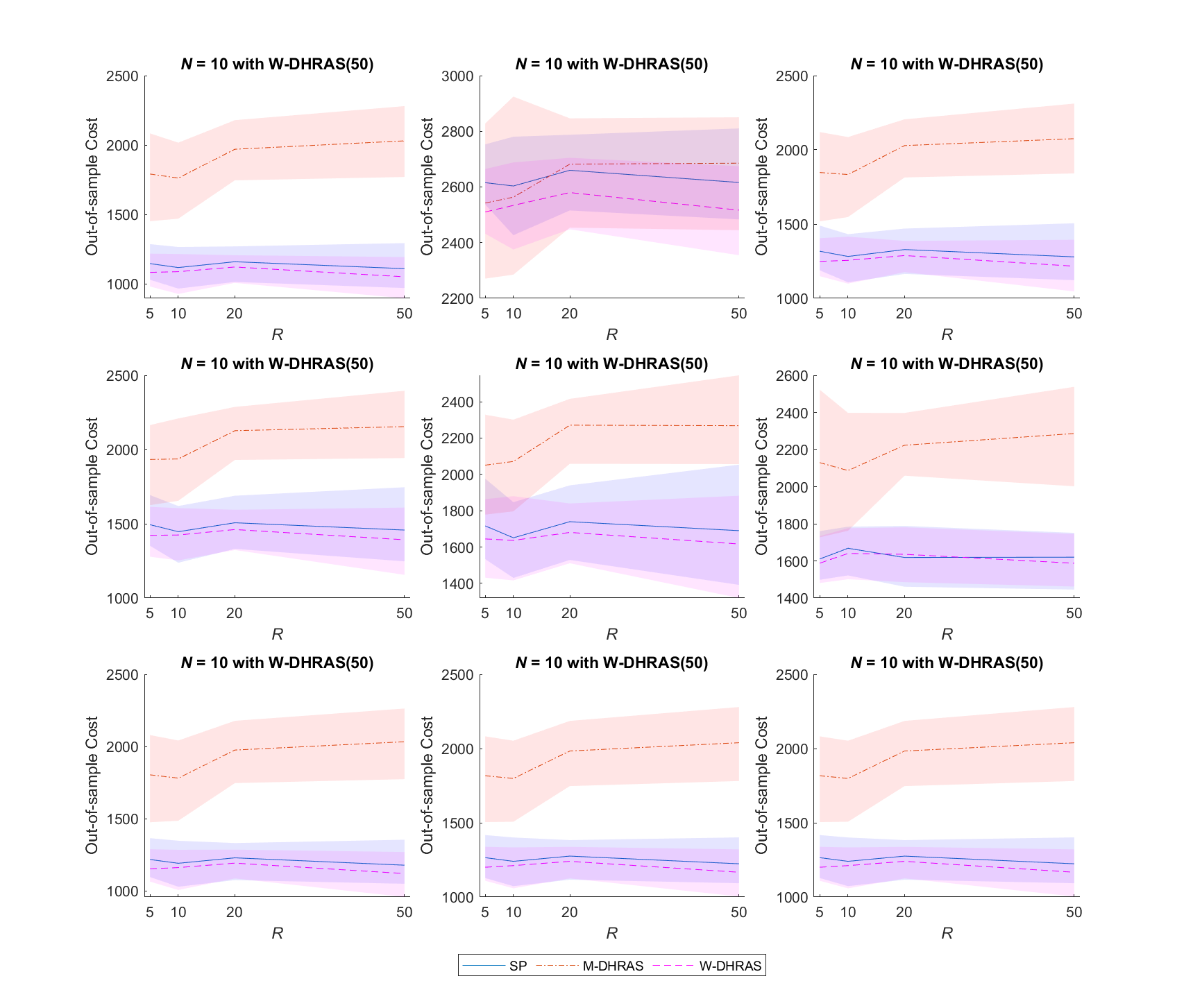}
    \caption{Out-of-sample performance for $N=10$ with cost structure (b) and $\lambda=2$}
    \label{fig:OC2_10}
\end{figure}

\section{Additional results on computational time} \label{apdx:SolTime}
We provide additional computational time results for the SP and M-DHRAS models. We solve $30$ instances of the models with different number of scenarios $R\in\{5,10,20,50,100,200,500\}$ and different number of customers $N\in\{6,8,10,15\}$ and obtain the average solution time. All the remaining experiment settings follow Section \ref{subsection:Expt_Setup}. We only show the solution time when the transportation cost $\lambda=2$ since they are similar for the other two choices of $\lambda$. Table \ref{table:CPU_Time_SP} presents the results for the SP model. We observe that all the instances could be solved efficiently (typically less than one minute).  We observe that the SP can solve all instances quickly in less than a minute.  This demonstrates that our SP model can solve large instances with an unrealistic number of customers even with many scenarios. Table~\ref{table:CPU_Time_Mom} shows the computational time for M-DHRAS for different numbers of customers. Recall that this model is not a sample-based model, i.e., it does not depend on the scenarios directly, and hence, we only report the time with $R=5$.  We observe that the M-DHRAS model can solve all instances efficiently in a few seconds. In Table \ref{table:CPU_Time_Wass}, we also present the computational time for W-DHRAS with $N=15$ under limited data setting $R\in\{5,10\}$ (where W-DHRAS could yield a better performance). We observe that the model could be solved in a reasonable time (within 4 minutes). In Table \ref{table:CPU_Time_Wass}, we also present the computational time for W-DHRAS with $N=15$ under limited data setting $R\in\{5,10\}$ (where W-DHRAS could yield a better performance). We observe that the model can solve these instances in a reasonable time within 4 minutes.


\begin{table}[t!]\centering\small
\ra{0.9}  
\caption{CPU time in seconds for solving the SP model} 
\begin{tabular}{@{}lrrrrrrr@{}} \toprule
SP  & \multicolumn{7}{c}{$(\cw_j,\cu_j,\co)=(2,1,20)$}  \\
\cmidrule{2-8} 
CPU Time (in s) & $R=5$ & $R=10$ & $R=20$ & $R=50$ & $R=100$ & $R=200$ & $R=500$\\
\cmidrule{1-8} 
$N = 6$     & 0.25  & 0.30  & 0.37   & 0.42    & 0.72  & 1.16  & 5.86   \\
$N = 8$     & 0.28  & 0.33  & 0.39   & 0.76    & 1.04  & 2.65  & 13.21  \\
$N = 10$    & 0.38  & 0.43  & 0.54   & 1.04    & 2.70  & 8.58  & 43.19  \\
$N = 15$    & 0.49  & 0.75  & 1.06   & 1.82    & 4.13  & 12.03 & 65.91  \\
\toprule
 SP  & \multicolumn{7}{c}{$(\cw_j,\cu_j,\co)=(1,5,7.5)$}\\
\cmidrule{2-8}  
CPU Time (in s) & $R=5$ & $R=10$ & $R=20$ & $R=50$ & $R=100$ & $R=200$ & $R=500$\\
\cmidrule{1-8} 
$N = 6$     & 0.23  & 0.21  & 0.33   & 0.44    & 0.75  & 1.37  & 7.22   \\
$N = 8$     & 0.28  & 0.32  & 0.39   & 0.79    & 1.07  & 3.01  & 16.24  \\
$N = 10$    & 0.35  & 0.42  & 0.54   & 1.07    & 2.13  & 6.84  & 36.11  \\
$N = 15$    & 0.51  & 0.77  & 1.06   & 1.74    & 4.00  & 11.13 & 63.83  \\
\bottomrule
\end{tabular}
\label{table:CPU_Time_SP}
\end{table}

\begin{table}[t!]\centering\small
\ra{0.9}  
\caption{CPU time in seconds for solving the M-DHRAS model.} 
\begin{tabular}{@{}lrr@{}} \toprule
M-DHRAS  & \multicolumn{2}{c}{$(\cw_j,\cu_j,\co)$}  \\
\cmidrule{2-3}
CPU Time (in s) & $(2,1,20)$ & $(1,5,7.5)$\\
\cmidrule{1-3} 
$N = 6$  & 0.46 & 0.46 \\
$N = 8$  & 1.11 & 0.90 \\
$N = 10$ & 2.05 & 1.79 \\
$N = 15$ & 8.85 & 7.97 \\
\bottomrule
\end{tabular}
\label{table:CPU_Time_Mom}
\end{table}

\begin{table}[t!]\centering\small
\ra{0.9}  
\caption{CPU time in seconds for solving the W-DHRAS model with $N=$15 customers.} 
\begin{tabular}{@{}lrr|rr@{}} \toprule
W-DHRAS ($N=15$)  & \multicolumn{2}{c}{$(\cw_j,\cu_j,\co)=(2,1,20)$} & \multicolumn{2}{c}{$(\cw_j,\cu_j,\co)=(1,5,7.5)$}  \\
\cmidrule{2-5}
CPU Time (in s) & $R=5$ & $R=10$ & $R=5$ & $R=10$\\
\cmidrule{1-5} 
W-DHRAS(0.5) & 52.16 & 203.57 & 47.71 & 210.13 \\
W-DHRAS(5)   & 54.86 & 208.56 & 48.78 & 210.00 \\
W-DHRAS(50)  & 60.57 & 233.40 & 54.60 & 239.81 \\
\bottomrule
\end{tabular}
\label{table:CPU_Time_Wass}
\end{table}

\newpage

\end{document}